\definecolor{beige}{rgb}{0.96, 0.96, 0.86}
\definecolor{airforceblue}{rgb}{0.36, 0.54, 0.66}
\definecolor{antiquefuchsia}{rgb}{0.57, 0.36, 0.51}
\definecolor{awesome}{rgb}{1.0, 0.13, 0.32}
\newcommand{\wt}{\widetilde}
\newcommand{\id}{\operatorname{id}}
\newcommand{\lpaths}[1]{\mathcal{L}^{#1}_{\mathcal{P}_T}(\paths)}
\newcommand*{\Lpaths}{\lpaths{p}}
\newcommand*{\Bb}[1]{B_b([0,T],#1)}
\newcommand*{\Cb}[1]{C([0,T],#1)}
\newcommand*{\Gat}[4]{\mathcal{G}^{#4}(#1,#2;#3)}
\newcommand*{\Gatot}[3]{\mathcal{G}^{#3}(#1,#2)}
\newcommand*{\paths}{\mathbb{S}}
\newcommand*{\medcap}{\mathbin{\scalebox{1.5}{\ensuremath{\cap}}}}
\newcommand{\sconv}{\stackrel{\scriptscriptstyle dW}{*_t}}
\newcommand{\conv}{\stackrel{}{*_t}}
\newcommand{\sconvt}[1]{\stackrel{\scriptscriptstyle dW}{*_{#1}}}
\newcommand{\convt}[1]{\stackrel{}{*_{#1}}}
\numberwithin{equation}{section}
\theoremstyle{plain}
\newtheorem{theorem}{Theorem}[section]
\newtheorem{corollary}[theorem]{Corollary}
\newtheorem{proposition}[theorem]{Proposition}
\newtheorem{lemma}[theorem]{Lemma}
\newtheorem{definition}[theorem]{Definition}
\newtheorem{assumption}[theorem]{Assumption}
\theoremstyle{definition}
\newtheorem{remark}[theorem]{Remark}
\newcommand{\lin}{L}
\newcommand{\mc}{\mathcal}
\renewcommand{\epsilon}{\varepsilon}
\renewcommand{\phi}{\varphi}
\def\qed{{\hfill\hbox{\enspace${ \blacksquare}$}} \smallskip}
\newcommand{\e}{e}
  \def\Swiech
\def\SWIECH
\def\b*{\begin{eqnarray*}}
\def\e*{\end{eqnarray*}}
\def \0{\mathbf{0}}
\theoremstyle{plain}
\theoremstyle{definition}
\theoremstyle{plain}
 \newtheorem{Theorem}{Theorem}[section]
\theoremstyle{definition}
 \newtheorem{Remark}[Theorem]{Remark}
 \theoremstyle{definition}
\def\<{\left\langle }
\def\>{\right\rangle }
\title{Path-dependent SDEs in Hilbert spaces}
\date{}
\author{Mauro Rosestolato\thanks{CMAP, \'Ecole Polytechnique, Paris, France,
e-mail: \texttt{mauro.rosestolato@polytechnique.edu}. 
This research has been
partially supported by
the ERC
321111 Rofirm.
}
}
\begin{document}

\maketitle

\begin{abstract} 
We study path-dependent SDEs in Hilbert spaces.
By using methods based on contractions in Banach spaces, we prove existence and uniqueness of mild solutions, continuity of mild solutions with respect to perturbations of all the data of the system, G\^ateaux differentiability of generic order $n$ of mild solutions with respect to the starting point, continuity  of the G\^ateaux derivatives with respect to all the data.
The analysis is performed for generic spaces of paths that do not necessarily coincide with the space of continuous functions.
\end{abstract}

\vspace{10pt}
\noindent\textbf{Keywords:} 
 stochastic functional differential equations in Hilbert spaces,
G\^ateaux differentiability,
contraction mapping theorem.

\vspace{10pt} 
\noindent\textbf{AMS 2010 subject classification:} 
37C25, 34K50, 37C05, 47H10, 47J35, 58C20, 58D25, 60G99, 60H10.

\section{Introduction}




In this paper  we deal with mild solutions to path-dependent SDEs
evolving in a separable Hilbert space $H$,
 of the form
\begin{equation}
  \label{2017-05-29:01}
  \begin{dcases}
    dX_t=(AX_t+b((\cdot,t),X))dt+ \sigma((\cdot,s),X)dW_s&\forall s\in(t,T]\\
    X_s=Y_s& s\in[0,t],
  \end{dcases}
\end{equation}
where $t\in[0,T)$, $Y$ is a $H$-valued
adapted process defined on a filtered probability space
$(\Omega,\mathcal{F},\{\mathcal{F}_t\}_{t\in[0,T]},\mathbb{P})$, $W$ is a cylindrical Wiener process
on $(\Omega,\mathcal{F},\{\mathcal{F}_t\}_{t\in[0,T]},\mathbb{P})$
 taking values in a separable Hilbert space $U$, 
 $b((\omega,s),X)$ is a $H$-valued random variable depending on $\omega\in \Omega$, on the time $s$, and on the path $X$, $\sigma((\omega,s),X)$ is a $L_2(U,H)$-valued random variable depending on $\omega\in\Omega$, on  the time $s$, and on the path $X$, and $A$ is the generator of a $C_0$-semigroup $S$ on $H$.
By using methods based on implicit functions associated to contractions in Banach spaces,
we study continuity of the mild solution $X^{t,Y}$
of \eqref{2017-05-29:01} with respect to $t,Y,A,b,\sigma$
under standard Lipschitz conditions on $b,\sigma$,
 G\^ateaux differentiability of generic order $n\geq 1$ of $X^{t,Y}$ with respect to $Y$ under G\^ateaux differentiability assumptions on $b,\sigma$, and continuity
with respect to $t,Y,A,b,\sigma$
 of the G\^ateaux differentials $ \partial ^n_{Y}X^{t,Y}$.




Path-dependent SDEs 
in finite dimensional spaces
are studied in \cite{Mohammed1984}.
The standard reference for SDEs in Hilbert spaces is \cite{DaPrato2014}.
More generally, in addition to SDEs in Hilbert spaces,
also the case of 
path-dependent SDEs in Hilbert spaces is  considered in \cite[Ch.\ 3]{Gawarecki2011}, but for the path-dependent case  the study is there limited mainly to existence and uniqueness of mild solutions.
Our framework generalize the latter one by
weakening the Lipschitz conditions on the coefficients,
 by letting the starting process $Y$ belong to a generic space of paths contained in $\Bb{H}$ (\footnote{$\Bb{H}$ denotes the space of bounded Borel functions $[0,T]\rightarrow H$.}) obeying few conditions, but not necessarily assumed to be $\Cb{H}$, and by 
providing results on 
 differentiability 
with respect to the initial datum 
and on continuity with respect to all the data.

In the literature
on mild solutions to SDEs in Hilbert spaces,
differentiability with respect to the initial
datum is always proved 
only up to order $n=2$,
 in the sense of G\^ateaux 
(\cite{DaPrato2004,DaPrato2014})
 or Fre\'chet (\cite{Gawarecki2011,Knoche2001}.
In \cite[Theorem 7.3.6]{DaPrato2004}
the
 case $n> 2$  is stated but not proved.
%
There are no available  results regarding differentiability with respect to the initial condition of mild solutions to SDEs of the type \eqref{2017-05-29:01}.
One of the contributions of the present work
is to fill this gap in the literature,
by extending 
 to a generic order $n$, in the G\^ateaux sense, and to the path-dependent case the results so far available.

In case \eqref{2017-05-29:01} is not path-dependent,
the continuity of $X^{t,Y}$, $ \partial _YX^{t,Y}$, and $ \partial ^2_{Y}X^{t,Y}$,
separately with respect to $t,Y$ and $A,b,\sigma$, is
considered and used in \cite[Ch.\ 7]{DaPrato2004}.
We extend
these previous results
 to the path-dependent case and to G\^ateaux derivatives $ \partial ^n_YX^{t,Y}$ of generic order $n$, proving joint continuity with respect to all the data $t,Y,A,b,\sigma$.

Similarly as in the cited literature, we obtain our results for mild solutions (differentiability and continuity with respect to the data) starting from analogous results for implicit functions associated to Banach space-valued contracting maps. 
Because of that, the first part of the 
paper is entirely devoted to study parametric contractions in Banach spaces and regularity of the associated implicit functions.
In this respect, 
 regarding G\^ateaux differentiability
of implicit functions associated to parametric contractions and continuity of the derivatives under perturbation of the data, 
we  prove a general result, for a  generic order $n$ of differentiability, extending
 the results in \cite{Cerrai2001,DaPrato2004,DaPrato2014}, that were limited to the case $n=2$.

In a unified framework, our work provides a collection of results for mild solutions to path-dependent SDEs which are very general, within the standard   case of Lipschitz-type assumptions on the coefficients, a useful toolbox for
starting dealing with path-dependent stochastic analysis in Hilbert spaces.
For example, 
the so called ``vertical derivative'' 
 in the finite dimensional functional It\=o calculus (\cite{Cont2013,Dupire2009}) of functionals like $F(t,\mathbf{x})=\mathbb{E}[\varphi(X^{t,\mathbf{x}})]$, where  $\varphi$ is a functional on the space $\mathbb{D}$  of c\`adl\`ag functions and
$\mathbf{x}\in \mathbb{D}$,
is easily obtained starting from the partial derivative of $X^{t,\mathbf{x}}$ with respect to a step function, which can be treated in our setting by choosing $\mathbb{D}$ as
space of paths
 (we refer to Remark~\ref{2017-05-01:01} for further details).
 Another field in which the tools here provided can be employed is the study of stochastic representations of classical solutions to path dependent Kolmogorov equations,
where second order derivatives are required.
Furthermore, the continuity of the mild solution and of its derivatives with respect to all the data, including the coefficients, can be used e.g.\ when merely continuous Lipschitz coefficients need to be approximated by smoothed out coefficients, which is in general helpful when dealing with Kolmogorov equations in Hilbert spaces (path- or non-path-dependent) for which notions other than classical solutions are considered, as strong-viscosity solutions
(\cite{Cosso2014a,Cosso2014b}) or strong solutions (\cite{Cerrai2001}).

\bigskip
The contents of the paper are organized as follows. 
First, in Section~\ref{sec:recalls-dif-banach}, we
recall some notions regarding strongly continuous G\^ateaux differentiability 
and some basic results for contractions in Banach spaces.
Then 
we provide the first main result (Theorem~\ref{teo:derivabilita.punto.fisso}): the strongly continuous  G\^ateaux differentiability up to a generic order $n$
of fixed-point maps associated to parametric contractions which are differentiable only with respect to some subspaces.
We conclude the section with a
result regarding
the continuity
 of 
the G\^ateaux differentials of the implicit function with respect to the data
(Proposition~\ref{propp:2012-05-23-aa}).

In Section~\ref{2017-04-25:24} we consider path-dependent SDEs.
After a standard existence and uniqueness result (Theorem~\ref{2016-04-13:00}), we move to study G\^ateaux differentiability with respect to the initial datum up to order $n$ of mild solutions, in Theorem~\ref{2016-04-05:05}, which is the other main result and justifies the study made in Section~\ref{sec:recalls-dif-banach}.
We conclude  with 
Theorem~\ref{2016-04-22:10}, which concerns the
continuity of the G\^ateaux differentials 
with respect 
to all the data of the
system (coefficients, initial time, initial condition).

\section{Preliminaries}
\label{sec:recalls-dif-banach}

In this section we recall the notions and develop the tools that we will apply to study path-dependent SDEs in Section~\ref{2017-04-25:24}.
We focus on strongly continuous G\^ateaux differentiability of fixed-point maps associated to parametric contractions in Banach spaces.

\subsection{Strongly continuous G\^ateux differentials}
\label{2017-04-24:00}

We begin by recalling the basic definitions regarding G\^ateaux differentials, mainly following \cite{Flett1980}.
Then
we will define the space of strongly continuously G\^ateaux differentiable functions, that will be the reference spaces in the following sections.

\medskip
If $X$, $Y$ are topological vector spaces, $U\subset X$ is a set,
$f\colon U\to Y$ is a function, $u\in U$, $x\in X$ is such that $[u-\epsilon x,u+\epsilon x]\subset U$~({\footnote{If $x,x'\in X$, the segment $[x,x']$ is the set $\{\zeta x+(1-\zeta)x'|\zeta\in[0,1]\}$.}) for some $\epsilon>0$, 
the
directional derivative of $f$ at $u$ for the increment $x$ is the limit  
$$
  \partial _x f(u)\coloneqq \lim_{t\rightarrow 0}\frac{f(u+tx)-f(u)}{t}\label{2016-02-22:00}
$$
whenever it exists.  
Also in the case in which the directional
derivative $ \partial _xf(u)$ is defined for all $x\in X$, it need not be linear.  

Higher order directional derivatives are defined recursively.
  For $n\geq 1$, $u\in U$, 
the
$n$th-order directional derivative
$ \partial ^n_{x_1\ldots x_{n}}f(u)$ at $u$ for the increments
$x_1,\ldots,x_n\in X$
is the directional derivative
of
$ \partial ^{n-1}_{x_1\ldots x_{n-1}}f$ at $u$ for the increment
$x_n$ (notice that this implies, by definition, the existence of $\partial ^n_{x_1\ldots x_{n-1}}f(u')$ for $u'$ in some neighborhood of $u'$ in  $U\medcap (u+\mathbb{R}x_n)$)



If $Y$ is locally convex, we denote by
$\lin_s(X,Y)$\label{2016-02-22:01}
the
space $\lin(X,Y)$ endowed with the coarsest topology wich makes
continuous the linear functions of the form
$$
\lin(X,Y)\to Y,\ \Lambda\mapsto \Lambda(x),
$$
for all $x\in X$.
Then $\lin_s(X,Y)$ is a locally convex space.

Let $X_0$ be a topological vector space continuously embedded into $X$.
If $u\in
U$, if $ \partial  _x f(u)$ exists for all $x\in X_0$ and $ X_0\rightarrow Y,\ x \mapsto   \partial  _x
f(u)$, belongs to $\lin(X_0,Y)$, then  $f$
is said to be \emph{G\^ateaux
  differentiable at $u$ with respect to  $X_0$}
 and the map $X_0\rightarrow Y,\ x \mapsto  \partial_x f(u)$, is the \emph{G\^ateaux differential of $f$ at $u$
  with respect to  $X_0$}. In this case, we denote the G\^ateaux differential of
$f$ at $u$ by $ \partial _{X_0}f(u)$ and its evaluation $ \partial  _x f(u)$ by
$ \partial _{X_0}f(u).x$.{\label{2016-02-22:02}} 
 If $ \partial _{X_0}f(u)$ exists for all $u\in U$, then we
say that $f$ is \emph{G\^ateaux differentiable with respect to $X_0$}, or, in case $X_0=X$, we just say that $f$ is \emph{G\^ateaux differentiable} and we use the notation $ \partial f(u)$ in place of $ \partial _Xf(u)$.

A function  $f\colon U\to Y$ is said to be \emph{strongly continuously
  G\^ateaux differentiable with respect to $X_0$} 
if it is 
G\^ateaux
differentiable with respect to $X_0$ and  
$$
U\to \lin_s(X_0,Y),\  u\mapsto  \partial _{X_0}f(u)
$$
is continuous.  If $n>1$, we say that $f$ is \emph{strongly
  continuously G\^ateaux differentiable up to order $n$ with respect to $X_0$}
if it is strongly continuously G\^ateaux differentiable up to order
$n-1$ with respect to $X_0$ and  
$$
 \partial _{X_0}^{n-1}f\colon U\to \overbrace{\lin_s(X_0,\lin_s(X_0,\cdots
  \lin_s(X_0,Y)}^{n-1\textrm{ times $L_s$}}\cdots ))
$$
exists and is strongly continuously G\^ateaux differentiable with respect to $X_0$. In
this case,  we denote $ \partial _{X_0}^nf\coloneqq \partial _{X_0} \partial _{X_0}^{n-1}f${\label{2016-02-22:03}} and $ \partial  ^nf\coloneqq \partial  \partial ^{n-1}f$.



\bigskip
Let $X,X_0$ be topological vector spaces, with $X_0$ continuously embedded into $X$,  let $U$ be an open subset of $X$, and let $Y$ be a locally convex space.

\smallskip
  We denote by $\Gat{U}{Y}{X_0}{n}$ the space of 
functions $f\colon U\rightarrow Y$ 
which are continuous and
strongly continuously G\^ateaux differentiable
up to order $n$
 with respect to $X_0$.
In case $X_0=X$, we use the notation $\Gatot{U}{Y}{n}$ instead of
$\Gat{U}{Y}{X}{n}$.
\smallskip



Let
$\lin^{(n)}_s(X_0^n,Y)$
\label{2016-02-22:05}
 be the vector space of $n$-linear functions
from $X_0^n$ into $Y$ which are continuous with respect to each variable
separately, endowed with the
coarsest vector topology making continuous all the linear
functions  of the form
$$
\lin^{(n)}_s(X_0^n,Y)\to Y,\  \Lambda\to \Lambda(x_1,\ldots,x_n)
$$
for $x_1,\ldots,x_n\in X_0$. 
Then $\lin^n_s(X_0^n,Y)$ is a locally convex space.
Trough the canonical identification 
(as topological vector spaces)
$$
\overbrace{\lin_s(X_0,\lin_s(X_0,\cdots \lin_s(X_0,Y)}^{n\textrm{    times $L_s$}}\cdots ))\cong \lin^{(n)}_s(X_0^n,Y),
$$
we can consider
$ \partial _{X_0 }^nf$ as taking values in 
$\lin_s^{(n)}(X_0^n,Y)$, whenever $f\in \Gat{U}{Y}{X_0}{n}$.

\medskip

If $X_0$, $X$, $Y$ are normed spaces, $U$ is an open subset of $X$,
  $ \partial  _x f(u)$ exists for all $u\in U$, $x\in X_0$,
$\partial_x f(u)$
is continuous with respect to $u$, for all $x\in X_0$,
 then
 $\partial _xf(u)$ is linear in  $x$
(see \cite[Lemma 4.1.5]{Flett1980}). 

The following proposition 
is a characterization
for
 the continuity conditions on the directional derivatives of a function $f\in \Gat{U}{Y}{X_0}{n}$, when $X_0,X,Y$ are normed spaces.

\begin{proposition}\label{prop:2014-07-30:01}
  Let $n\geq 1$, let $X_0,X,Y$ be normed spaces,
 with $X_0$ continuously embedded into $X$, and let $U$
be an open subset of $X$.
  Then $f\in \Gat{U}{Y}{X_0}{n}$
if and only if $f$ is continuous,
the directional derivatives
  $\partial _{x_1\ldots x_j}^jf(u)$ exist for all $u\in U$,
  $x_1,\ldots,x_j\in X_0$, $j=1,\ldots,n$, and the functions  
  \begin{equation}
    \label{eq:2017-05-14:00}
     U\times X_0^j\to Y,\  (u,x_1,\ldots,x_j)\mapsto
\partial ^j_{x_1\ldots x_j}f(u)
\end{equation}
are separately continuous in each variable.
 In this case,
 \begin{equation}
   \label{eq:2016-02-22:06}
    \partial _{X_0}^jf(u).(x_1,\ldots,x_j)=\partial _{x_1\ldots x_j}^jf(u) \qquad  \forall u\in U,
\ \forall x_1,\ldots,x_j\in X_0,
 \ j=1,\ldots,n.
\end{equation}
\end{proposition}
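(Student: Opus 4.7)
The plan is to prove both directions by induction on $j$, using the fact that the topology on each $\lin^{(k)}_s(X_0^k, Y)$ is separate pointwise convergence, together with the cited \cite[Lemma 4.1.5]{Flett1980}, which in the normed setting upgrades a directional derivative $\partial_x f(u)$ existing for every $x \in X_0$ and continuous in $u$ for each fixed $x$ to a map linear in $x$.

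For necessity, assume $f \in \Gat{U}{Y}{X_0}{n}$. Continuity of $f$ is part of the definition, and unrolling $\partial_{X_0}^j f = \partial_{X_0}\,\partial_{X_0}^{j-1} f$ by induction on $j$ produces the existence of $\partial^j_{x_1 \ldots x_j} f(u)$ together with \eqref{eq:2016-02-22:06}, since differentiating into $\lin^{(j-1)}_s(X_0^{j-1}, Y)$ means taking a pointwise limit on $X_0^{j-1}$ which then recovers the iterated directional derivative via its recursive definition. Separate continuity of \eqref{eq:2017-05-14:00} in $u$ is precisely the strong continuity assumption on $\partial_{X_0}^j f$, while separate continuity in each $x_i$ is the defining property of the space $\lin^{(j)}_s(X_0^j, Y)$ to which $\partial_{X_0}^j f(u)$ belongs.

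For sufficiency, I induct on $j$ to show that $f$ is strongly continuously G\^ateaux differentiable with respect to $X_0$ up to order $j$ and that \eqref{eq:2016-02-22:06} holds. The base $j=1$ applies Flett's lemma to the hypothesized (continuous in $u$) directional derivative $u \mapsto \partial_x f(u)$ to recover linearity in $x$; continuity in $x$ follows from the separate continuity assumption in $x$, and strong continuity of $\partial_{X_0} f$ into $\lin_s(X_0, Y)$ reduces tautologically to continuity in $u$ of each $u \mapsto \partial_x f(u)$. For the inductive step, set $g \coloneqq \partial_{X_0}^{j-1} f$ and evaluate the incremental ratio $\bigl(g(u+tx_j) - g(u)\bigr)/t$ on an arbitrary $(x_1, \ldots, x_{j-1}) \in X_0^{j-1}$. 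By the inductive identification at order $j-1$, this equals the incremental ratio of $u \mapsto \partial^{j-1}_{x_1 \ldots x_{j-1}} f(u)$, whose limit $\partial^j_{x_1 \ldots x_j} f(u)$ exists by hypothesis. Since convergence in $\lin^{(j-1)}_s(X_0^{j-1}, Y)$ is just pointwise convergence in $Y$, this gives $\partial_{x_j} g(u)$ with the claimed value. Linearity in $x_j$ is then supplied, for each fixed $(x_1, \ldots, x_{j-1})$, by another invocation of Flett's lemma, now applied to the hypothesis-continuous function $u \mapsto \partial^{j-1}_{x_1 \ldots x_{j-1}} f(u)$; continuity of $x_j \mapsto \partial_{x_j} g(u)$ into $\lin^{(j-1)}_s(X_0^{j-1}, Y)$ is exactly the separate continuity of \eqref{eq:2017-05-14:00} in $x_j$. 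Strong continuity in $u$ of the resulting $\partial_{X_0}^j f$ again reduces pointwise to the continuity in $u$ of each $\partial^j_{x_1 \ldots x_j} f(u)$.

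The main obstacle will be the careful bookkeeping of the canonical isomorphism $\lin_s(X_0,\lin^{(j-1)}_s(X_0^{j-1}, Y)) \cong \lin^{(j)}_s(X_0^j, Y)$ at each induction step, ensuring that the incremental ratio converges in the iterated strong topology (which collapses to pointwise convergence in $Y$ once evaluated on $(x_1, \ldots, x_{j-1})$) and that linearity in the newly introduced direction $x_j$, as opposed to the already handled directions, is extracted cleanly from Flett's lemma rather than silently assumed.
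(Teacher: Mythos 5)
Your proposal is correct and follows essentially the same route as the paper's proof: both directions are handled by induction, with \cite[Lemma 4.1.5]{Flett1980} supplying linearity in the new direction from continuity in $u$, and with the reduction of convergence and continuity in $\lin_s^{(j-1)}(X_0^{j-1},Y)$ to pointwise statements in $Y$. The only point treated more explicitly in the paper is that the pointwise limit of the incremental quotients must itself be verified to lie in $\lin_s^{(j-1)}(X_0^{j-1},Y)$ (multilinearity passes to pointwise limits, and separate continuity of the candidate limit is part of the hypothesis), which your write-up glosses over but which is immediate.
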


\begin{proof}
  Suppose that 
the derivatives 
$\partial^j_{x_1\ldots x_j}f(u)$ exists for all $u\in U$, $x_1,\ldots,x_j\in X_0$, $j=1,\ldots,n$, separately continuous in $u,x_1,\ldots, x_j$.
We want to show that $f\in \Gat{U}{Y}{X_0}{n}$.

We proceed by induction on $n$.
Let $n=1$.
 Since $\partial _xf(u)$ is continuous in $u$, for all $x\in X_0$, we have
  that $X_0\rightarrow Y,\  x\mapsto \partial _xf(u)$ is linear 
(\cite[Lemma 4.1.5]{Flett1980}). 
By assumption, it is also continuous.
Hence $x \mapsto \partial_x f(u)\in L(X_0,Y)$ for all $u\in U$.
This shows the existence of $ \partial _{X_0}f$.
  The continuity of $U\to \lin_s(X_0,Y),\  u\mapsto
   \partial _{X_0}  f(u)$, comes from the separate continuity 
of \eqref{eq:2017-05-14:00} and
 from the definition of 
the locally convex topology on $L_s(X_0,Y)$.
This shows that $f\in \Gat{U}{Y}{X_0}{1}$.


Let now $n>1$.
By inductive hypothesis, we  may assume that $f\in \Gat{U}{Y}{X_0}{n-1}$ and 
  $$
 \partial _{X_0 }^{j}f(u).(x_1,\ldots,x_j)=\partial _{x_1\ldots x_j}^jf(u)\qquad \forall u\in U,
\ \forall j=1,\ldots,n-1,
\ \forall (x_1,\ldots, x_j)\in X_0^j.
$$ 
Let $x_n\in X_0$. The limit
\begin{equation}
  \label{eq:2017-05-14:01}
  \lim_{t\rightarrow 0}\frac{ \partial ^{n-1}_{X_0 }f(u+tx_n)- \partial ^{n-1}_{X_0 }f(u)}{t}=\Lambda
\end{equation}
exists in $\lin_s^{(n-1)}(X_0^{n-1},Y)$
 if and only if $\Lambda\in \lin_s^{(n-1)}(X_0^{n-1},Y)$ and, for all
$x_1,\ldots,x_{n-1}\in X_0$, the limit
\begin{equation}
  \label{eq:2017-05-14:02}
  \lim_{t\rightarrow 0}\frac{\partial ^{n-1}_{x_1\ldots
    x_{n-1}}f(u+tx_n)-\partial ^{n-1}_{x_1\ldots x_{n-1}}f(u)}{t}=\Lambda(x_1,\ldots,x_{n-1})
\end{equation}
holds in $Y$.
By assumption, the limit \eqref{eq:2017-05-14:02} is equal to 
$ \partial ^n_{x_1\ldots x_{n-1}x_n}f(u)$, for all $x_1,\ldots,x_{n-1}$.
Since, by assumption, $ \partial ^n_{x_1\ldots x_{n-1}x_n}f(u)$ is separately continuous in $u,x_1,\ldots,x_{n-1},x_n$, we have that the limit \eqref{eq:2017-05-14:01} exists in $\lin_s^{(n-1)}(X_0^{n-1},Y)$ and is given by
$$
 \partial _{x_n} \partial^{n-1} _{X_0}f(u).(x_1,\ldots,x_{n-1})=\Lambda(x_1,\ldots,x_{n-1}) =\partial ^n_{x_1\ldots x_{n-1}x_n}f(u)\qquad \forall x_1,\ldots,x_{n-1}\in X_0.
$$
Since $u$ and $x_n$ were arbitrary, we have proved that
$ \partial _{x_n} \partial ^{n-1}_{X_0}f(u)$  exists for all $u$, $x_n$.
Moreover,  for all $x_1,\ldots,x_n\in X_0$, the
function  
$$
U\to Y,\  u\mapsto 
\partial_{x_n} \partial ^{n-1}_{X_0}f(u).(x_1,\ldots,x_{n-1})
=\partial_{x_n}\partial ^{n}_{x_1\ldots x_{n-1}}f(u)
$$
is
continuous, by 
separate continuity of 
\eqref{eq:2017-05-14:00}.
Then 
$\partial ^{n}_{x_1\ldots
  x_{n-1}x_n}f(u)$ is linear in $x_n$.
The continuity of
\begin{equation}
  \label{eq:2016-04-20:01}
  X_0\rightarrow \lin_s^{(n-1)}(X_0^{n-1},Y),\ 
 x\mapsto\partial _x \partial _{X_0 }^{n-1}f(u)
\end{equation}
comes from the 
continuity of $\partial ^{n}_{x_1\ldots x_{n-1}x}f(u)$ in each variable, separately.
Hence \eqref{eq:2016-04-20:01}  
belongs to $L_s(X_0,L_s^{n-1}(X_0^{n-1},Y))$
for all $u\in U$.
 This shows that 
$ \partial _{X_0 }^{n-1}f$
is G\^ateaux differentiable with respect to $X_0$
and that
\begin{equation*}
     \partial _{X_0}^nf(u).(x_1,\ldots,x_n)
= 
\partial^n_{x_1\ldots x_n}f(u)
\qquad  \forall u\in U,
\ \forall x_1,\ldots,x_n\in X_0,
\end{equation*}
and  shows also the continuity of
$$
U\rightarrow L^{(n)}_s(X_0^n,Y),\ u \mapsto   \partial _{X_0}^nf(u),
$$
due to 
 the continuity of
the derivatives of $f$, separately in each direction.
Then we have proved that
$f\in \Gat{U}{Y}{X_0}{n}$ and that
\eqref{eq:2016-02-22:06} holds.

Now suppose that
$f\in \Gat{U}{Y}{X_0}{n}$.
By the very definition
of $ \partial _{X_0 }f$,
$\partial _xf(u)$ exists
for all $x\in
X_0$ and $u\in U$, 
it is
separately continuous in $u,x$, and coincides with $ \partial _{X_0 }f(u).x$.  By induction, assume that 
$\partial^{n-1}_{x_1\ldots x_{n-1}}f(u)$ exists
and that
\begin{equation}
  \label{eq:2016-02-22:07}
   \partial ^{n-1}_{X_0}f(u).(x_1,\ldots,x_{n-1})=\partial^{n-1}_{x_1\ldots x_{n-1}}f(u)\qquad
\forall u\in U,\ \forall x_1,\ldots,x_{n-1}\in X_0.
\end{equation}
Since
$ \partial _{X_0 }^{n-1}f(u)$ is G\^ateaux differentiable, 
the directional derivative $\partial _{x_n} \partial ^{n-1}_{X_0 }f(u)$ exists.
Hence, by 
\eqref{eq:2016-02-22:07},
the derivative
$\partial ^n_{x_1\ldots x_{n-1}x_n}f(u)$
exists for all
$x_1,\ldots,x_{n-1},x_n\in X_0$.  The continuity of
$\partial ^n_{x_1\ldots x_{n-1}x_n}f(u)$ with respect to $u$ comes from
the continuity of $ \partial _{X_0 }^nf$. The continuity of
$\partial ^n_{x_1\ldots x_{j}\ldots x_n}f(u)$ with respect to $x_j$
comes from the fact that, for all $x_{j+1},\ldots,x_n\in X_0$, $u\in U$,
$$
X_0^j\rightarrow Y,\ 
(x_1',\ldots,x_j')  \mapsto  \partial ^n_{X_0 }f(u).(
x_1',\ldots,x_j',x_{j+1},\ldots,x_{n})
$$
belongs to $\lin^{(j)}_s(X_0^j,Y)$.
\end{proof}

\begin{remark}\label{rem:2013.09.06}
If $X_0$ is Banach, $X$ is normed, $Y$ is locally
convex, and $f\in \Gat{X}{Y}{X_0}{n}$,
then, by 
Proposition~\ref{prop:2014-07-30:01}
and the Banach-Steinhaus theorem, 
if follows that 
the map
$$U\times X_0^n\to Y,\  (u,x_1,\ldots,x_n)\mapsto  \partial _{X_0 }^nf(u).(x_1,\ldots,x_n)$$ is continuous, jointly 
in $u,x_1,\ldots,x_n$.
\end{remark}

\begin{remark}\label{rem:schwarz}
Under the assumption of Proposition
\ref{prop:2014-07-30:01},
by Schwarz' theorem,
  $$
  y^*( \partial ^2_{zw} f(u))=\partial ^2_{zw}(y^* f)(u)=
  \partial ^2_{wz}(y^* f)(u)= y^* (\partial ^2_{wz}
    f(u)),\  \forall u\in U,\  \forall w,x\in X_0,\ \forall y^*\in Y^*.
  $$
Hence $\partial ^2_{wz} f=\partial ^2_{wz}f$ for all $w,z\in X_0$.
\end{remark}


\subsubsection{Chain rule}

In this subsection, we show the
 classical Fa\`a di Bruno formula, together with a corresponding stability result, for 
derivatives
of 
order $n\geq 1$
 of compositions of strongly continuously G\^ateaux differentiable functions. 
We will use
this formula 
in order to prove the main results of Section \ref{2016-02-22:08} (Theorem \ref{teo:derivabilita.punto.fisso} and Proposition \ref{propp:2012-05-23-aa}).

In
\cite{Clark2013}, 
a version of
Proposition \ref{teo:faa-di-bruno} is provided for the case of ``chain differentials''.
We could prove that the strongly continuously G\^ateaux differentiable functions that we consider satisfy the assumptions of \cite[Theorem 2]{Clark2013}.
This would provide Proposition \ref{teo:faa-di-bruno} as a corollary of \cite[Theorem 2]{Clark2013}.
Since the direct proof of 
Proposition \ref{teo:faa-di-bruno} is quite concise, we prefer to report it, and avoid  introducing other notions of differential.
Besides, we give the related stability results.

\begin{lemma}\label{lemm:formula_derivate_composte}
  Let $k\geq 0$, let $X_1$, $X_2$, $X_3$ be Banach spaces, let $U$ be an open subset of $X_1$, and let $X_0$ be a subspace of $X_1$.
Let
  $f,f_1,\ldots,f_k\colon U\to X_2$ be functions
having
directional
  derivatives
$ \partial _xf,
\partial _xf_1,\ldots,
\partial _xf_k$
 with respect to all  $x\in X_0$
   and let $g\in \Gatot{X_2}{X_3}{k+1}$.
  Then
  \begin{equation}\label{eq:gatogamma}
    \gamma\colon U\to X_3,\  u\mapsto \partial^k_{f_1(u)\ldots f_k(u)}g\left(f(u)\right)
  \end{equation}
  has directional derivatives $ \partial _x\gamma$ with respect to all   $x\in X_0$ and
  \begin{equation}\label{eq:formula_derivate_composte}
\hskip-1pt    \partial _x\gamma(u)=
    \partial^{k+1}_{\partial _xf(u)f_1(u)\ldots f_k(u)}g\left(f(u)\right)+
    \sum_{i=1}^k \partial^k_{f_1(u)\ldots \partial _xf_i(u)\ldots f_k(u)}g\left(f(u)\right) \ \ \forall u\in U,\ \forall x\in X_0.
  \end{equation}
  If $X_0$ is a Banach space continuously embedded in $X_1$ and if $f,f_1,\ldots,f_k\in \Gat{U}{X_2}{X_0}{1}$, then 
$\gamma\in \Gat{U}{X_3}{X_0}{1}$.
\end{lemma}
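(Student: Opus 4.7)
The plan is to view $\partial^k g(y)$ as a jointly continuous $k$-linear form in its directions and then to split $\gamma(u+tx)-\gamma(u)$ telescopically so as to isolate the effect of perturbing each of the $k+1$ arguments in $\gamma$ one at a time. Set $\Phi(y;v_1,\ldots,v_k)\defeq \partial^k_{v_1\ldots v_k}g(y)$. Since $g\in\Gatot{X_2}{X_3}{k+1}$ and $X_2$ is Banach, Proposition~\ref{prop:2014-07-30:01} and Remark~\ref{rem:2013.09.06} make $\Phi$ jointly continuous and $k$-linear in $(v_1,\ldots,v_k)$, and likewise make $(y,v_0,v_1,\ldots,v_k)\mapsto \partial^{k+1}_{v_0 v_1\ldots v_k}g(y)$ jointly continuous. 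For small $t\neq 0$ I would then write the telescoping decomposition
\begin{align*}
\gamma(u+tx)-\gamma(u) &=\bigl[\Phi(f(u+tx);f_1(u+tx),\ldots,f_k(u+tx))\\
&\hspace{3em}-\Phi(f(u);f_1(u+tx),\ldots,f_k(u+tx))\bigr]\\
&\quad +\sum_{i=1}^k\bigl[\Phi(f(u);f_1(u),\ldots,f_{i-1}(u),f_i(u+tx),\ldots,f_k(u+tx))\\
&\hspace{3em}-\Phi(f(u);f_1(u),\ldots,f_{i-1}(u),f_i(u),f_{i+1}(u+tx),\ldots,f_k(u+tx))\bigr].
\end{align*}

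The first bracket is treated by the fundamental theorem of calculus applied to the real-variable function $\eta(s)\defeq \Phi(f(u)+s(f(u+tx)-f(u));f_1(u+tx),\ldots,f_k(u+tx))$ on $[0,1]$: its $s$-derivative equals $\partial^{k+1}_{(f(u+tx)-f(u))f_1(u+tx)\ldots f_k(u+tx)}g$ evaluated at the intermediate point, and is continuous in $s$ by joint continuity of $\partial^{k+1}g$. Dividing by $t$, using $t^{-1}(f(u+tx)-f(u))\to \partial_x f(u)$ in $X_2$ and $f_j(u+tx)\to f_j(u)$ (the existence of $\partial_x f_j(u)$ guarantees continuity of $f_j$ along the line through $u$ in direction $x$), the limit is $\partial^{k+1}_{\partial_x f(u) f_1(u)\ldots f_k(u)}g(f(u))$. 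Each remaining bracket, by $k$-linearity of $\Phi$ in the direction slots, equals $\Phi(f(u);f_1(u),\ldots,t^{-1}(f_i(u+tx)-f_i(u)),f_{i+1}(u+tx),\ldots,f_k(u+tx))$, which tends by joint continuity to $\partial^k_{f_1(u)\ldots \partial_x f_i(u)\ldots f_k(u)}g(f(u))$. Summing produces formula \eqref{eq:formula_derivate_composte}.

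For the last assertion, assume $X_0$ is Banach and $f,f_1,\ldots,f_k\in \Gat{U}{X_2}{X_0}{1}$. Continuity of $\gamma$ is immediate from $\gamma=\Phi\circ(f,f_1,\ldots,f_k)$ and joint continuity of $\Phi$. Reading \eqref{eq:formula_derivate_composte}, $x\mapsto \partial_x\gamma(u)$ is linear in $x$ (since $\partial_x f(u)=\partial_{X_0} f(u).x$ and $\partial_x f_i(u)=\partial_{X_0} f_i(u).x$ are linear and $\partial^{k+1}g(f(u)),\partial^kg(f(u))$ are multilinear) and continuous in $x$ (those linear operators are bounded and the multilinear maps are separately continuous), so $\partial_{X_0}\gamma(u)\in L(X_0,X_3)$ exists. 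Strong continuity of $u\mapsto \partial_{X_0}\gamma(u)$ reduces to continuity of $u\mapsto \partial_x\gamma(u)$ for each fixed $x\in X_0$, which follows term-by-term from \eqref{eq:formula_derivate_composte} since $f(u),f_i(u),\partial_x f(u),\partial_x f_i(u)$ depend continuously on $u$ (the last two by strong continuity of $\partial_{X_0} f$ and $\partial_{X_0} f_i$) and $\partial^jg$ is jointly continuous. The chief technical obstacle is justifying the FTC step; it is valid precisely because the integrand is continuous in $s$ thanks to the joint continuity of $\partial^{k+1}g$ supplied by Remark~\ref{rem:2013.09.06}.
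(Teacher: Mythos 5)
Your proposal is correct and follows essentially the same route as the paper's proof: the identical telescoping decomposition isolating the base point and each direction slot, the fundamental theorem of calculus for the base-point increment, $k$-linearity for the direction increments, and joint continuity of $\partial^{k+1}g$ (via Proposition~\ref{prop:2014-07-30:01} and Remark~\ref{rem:2013.09.06}) to pass to the limit and to deduce the final regularity assertion. No gaps.
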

\begin{proof}
  Let $u\in U$, $x\in X_0$, and
let $[u-\epsilon x,u+\epsilon
  x]\subset U$, for some $\epsilon>0$.  
Let
  $h\in[-\epsilon,\epsilon]\setminus\{0\}$.
By strong continuity of $  \partial ^{k+1}g$ and by $k$-linearity of $ \partial ^k_{x_1\ldots x_k}g$ with respect to $x_1,\ldots,x_k$, we can write
  \begin{equation*}
    \begin{split}
          \frac{\gamma(u+hx)-\gamma(u)}{h}&=\\
&\hskip-25pt =   \frac{1}{h}\left(\partial^k_{f_1(u+hx)f_2(u+hx)\ldots
        f_k(u+hx)}g\left(f(u+hx)\right)-\partial^k_{f_1(u+hx)f_2(u+hx)\ldots
        f_k(u+hx)}g\left(f(u)\right)\right)\\
 &\hskip-25pt \phantom{=} +  \frac{1}{h}\sum_{i=1}^k\left(\partial^k_{f_1(u)f_2(u)\ldots f_{i-1}(u)f_i(u+hx)f_{i+1}(u+hx)\ldots f_k(u+hx)}g\left(f(u)\right)\right.\\
  &\hskip+110pt   \left.-    \partial^k_{f_1(u)f_2(u)\ldots f_{i-1}(u)f_i(u)f_{i+1}(u+hx)\ldots f_k(u+hx)}g\left(f(u)\right)\right)\\
    &\hskip-25pt =\int_0^1
    \partial_{\frac{f(u+hx)-f(u)}{h}} \partial^k_{f_1(u+hx)f_2(u+hx)\ldots  f_k(u+hx)}g\left(f(u)+\theta\left(f(u+hx)-fux)\right)\right) {d}  \theta\\
   &\hskip-25pt \phantom{=} +\sum_{i=1}^k \partial^k_{f_1(u)f_2(u)\ldots
      f_{i-1}(u)\frac{f_i(u+hx)-f_i(u)}{h}f_{i+1}(u+hx)\ldots
      f_k(u+hx)}g\left(f(u)\right).
  \end{split}
\end{equation*}
By continuity of $f,f_1,\ldots,f_k$
on the set $(u+\mathbb{R}x)\medcap U$ and by
joint continuity of $ \partial ^{k+1}g$,
the
  integrand function 
is uniformly continuous in
 $(h,\theta)\in
  ([-\epsilon ,\epsilon ]\setminus \{0\})\times [0,1]$.
Then we can pass  to the limit $h\rightarrow 0$ and obtain
  \eqref{eq:formula_derivate_composte}.  

If
  $f,f_1,\ldots,f_k\in \Gat{U}{X_2}{X_0}{1}$,
  then the strong continuity of
  $ \partial _{X_0}\gamma$ 
  comes from Proposition \ref{prop:2014-07-30:01}
and formula  \eqref{eq:formula_derivate_composte},
by recalling also Remark~\ref{rem:2013.09.06}.
\end{proof}

\begin{lemma}\label{propp:2012-05-22-ac}
  Let $n\in \mathbb{N}$. Let $X_0$, $X_1$, $X_2$, $X_3$ be Banach
  spaces, with $X_0$ continuously embedded in $X_1$, and let $U\subset  X_1$ be an open set.
Let
\begin{equation*}
  \begin{dcases}
    f_0,\ldots,f_n\in \Gat{U}{X_2}{X_0}{1}&\\
    f_0^{(k)},\ldots,f_n^{(k)}\in \Gat{U}{X_2}{X_0}{1}& \forall k\in \mathbb{N}\\
  g \in \Gatot{X_2}{X_3}{n+1}\\
  g^{(k)}\in \Gatot{X_2}{X_3}{n+1}&\forall n\in \mathbb{N}.
\end{dcases}
\end{equation*}
Suppose that, for $i=0,\ldots,n$,
\begin{equation*}
  \begin{dcases}
    \lim_{k\rightarrow \infty}f_i^{(k)}(u)= f_i(u)\\
\lim_{k\rightarrow \infty}\partial _xf_i^{(k)}(u)= \partial
    _xf_i(u),
  \end{dcases}
\end{equation*}
uniformly for $u$ on compact subsets of $U$ and $x$ on
compact subsets of $X_0$, and that
$$
\lim_{k\rightarrow \infty}\partial^{j} _{x_1\ldots x_{j}}g^{(k)}(x_0)=\partial^{j} _{x_1\ldots x_j}g(x_0)\qquad j=n,n+1,
$$
uniformly for $x_0,x_1,\ldots,x_j$ on compact subsets of $X_2$. 
Define
\begin{equation}
  \begin{dcases}
    \gamma&\colon U\to X_3,\ u\mapsto 
    \partial^n_{
      f_1(u)\ldots f_n(u)}g(f_0(u))\\
    \gamma^{(k)}&\colon U\to X_3,\ u\mapsto \partial^n_{f_1^{(k)}(u)\ldots
      f^{(k)}_n(u)}g^{(k)}(f^{(k)}_0(u)),\qquad \forall k\in \mathbb{N}.
  \end{dcases}
\end{equation}
Then
\begin{equation}\label{eq:2012-05-22-ae}
\lim_{k\rightarrow \infty}  \partial _x\gamma^{(k)}(u)=
  \partial _x\gamma(u)
\end{equation}
uniformly for $u$ on compact subsets of $U$ and $x$ on
compact subsets of $X_0$.
\end{lemma}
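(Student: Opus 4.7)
The plan is to reduce the statement to the chain-rule formula \eqref{eq:formula_derivate_composte} of Lemma \ref{lemm:formula_derivate_composte}, applied both to $\gamma$ and to each $\gamma^{(k)}$. Since $g\in\Gatot{X_2}{X_3}{n+1}$ and $f_0,\ldots,f_n\in \Gat{U}{X_2}{X_0}{1}$, that lemma gives
\begin{equation*}
\partial_x\gamma(u)=\partial^{n+1}_{\partial_xf_0(u)\,f_1(u)\ldots f_n(u)}g(f_0(u))+\sum_{i=1}^{n}\partial^n_{f_1(u)\ldots\partial_xf_i(u)\ldots f_n(u)}g(f_0(u)),
\end{equation*}
and an entirely analogous expression for $\partial_x\gamma^{(k)}(u)$ with every $f_i$, $g$ replaced by $f_i^{(k)}$, $g^{(k)}$. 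It therefore suffices to prove that each of the $n+1$ summands converges to its limit uniformly for $(u,x)$ in a fixed compact set $K\times C\subset U\times X_0$.

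Next I would establish the compactness backbone that makes the hypotheses usable. By Remark \ref{rem:2013.09.06} each $f_i$ is jointly continuous on $U$ and each $(u,x)\mapsto \partial_xf_i(u)$ is jointly continuous on $U\times X_0$, so the images $f_i(K)$ and $\{\partial_xf_i(u):(u,x)\in K\times C\}$ are compact in $X_2$. Combining this with the uniform-on-compacts convergences $f_i^{(k)}\to f_i$ and $\partial_x f_i^{(k)}\to\partial_xf_i$ and a standard $\epsilon$-net argument, the unions $\bigcup_k f_i^{(k)}(K)\cup f_i(K)$ and $\bigcup_k\{\partial_xf_i^{(k)}(u):(u,x)\in K\times C\}\cup\{\partial_xf_i(u):(u,x)\in K\times C\}$ are totally bounded, hence relatively compact in $X_2$. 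This places all arguments that will ever appear inside fixed compact subsets of $X_2$, which is exactly what is needed to invoke the hypotheses on $\partial^ng^{(k)}$ and $\partial^{n+1}g^{(k)}$.

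Finally, for each summand I would carry out a two-stage telescoping. Taking for definiteness the last summand of the $k$-th expression, $T_k(u,x):=\partial^n_{f_1^{(k)}(u)\ldots\partial_xf_i^{(k)}(u)\ldots f_n^{(k)}(u)}g^{(k)}(f_0^{(k)}(u))$, I split $T_k(u,x)-T(u,x)$ first into the difference $(\partial^ng^{(k)}-\partial^ng)(f_0^{(k)}(u))\cdot(\ldots)$, which tends to zero uniformly by hypothesis on $g^{(k)}$ combined with the compactness above; and second into a chain of swaps replacing the $f_j^{(k)}$'s (and $\partial_xf_i^{(k)}$) by their limits one factor at a time. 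Each swap in a direction argument is controlled using the multilinearity of $\partial^ng(x_0)$ together with the uniform convergences $f_j^{(k)}\to f_j$ and $\partial_xf_i^{(k)}\to\partial_xf_i$; the final swap in the base point $f_0^{(k)}\to f_0$ uses joint continuity of $\partial^ng$ on the compact product (Remark \ref{rem:2013.09.06}) together with uniform convergence of $f_0^{(k)}$. The term $\partial^{n+1}_{\partial_xf_0(u)\,f_1(u)\ldots f_n(u)}g(f_0(u))$ is handled identically, using the hypothesis on $\partial^{n+1}g^{(k)}$ in place of the hypothesis on $\partial^ng^{(k)}$. I expect the main obstacle to be purely one of bookkeeping: verifying that at every intermediate step of the telescoping all points at which $\partial^ng$ and $\partial^{n+1}g$ are evaluated remain inside the fixed compacta identified above, so that the uniform-on-compacts convergences can legitimately be applied and all estimates can be made independent of $(u,x)\in K\times C$.
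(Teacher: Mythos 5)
Your proposal is correct and follows essentially the same route as the paper: the paper's proof consists precisely of invoking formula \eqref{eq:formula_derivate_composte} from Lemma \ref{lemm:formula_derivate_composte}, Remark \ref{rem:2013.09.06}, and the general principle that compositions of sequences of continuous functions converging uniformly on compacta converge uniformly on compacta to the composition of the limits. Your compactness bookkeeping and term-by-term telescoping simply spell out the details that the paper leaves implicit in that last principle.
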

\begin{proof}
Since the composition of sequences of continuous functions uniformly convergent on compact sets is convergent 
to the composition of the limits, uniformly
on compact sets, 
it is sufficient
to recall Remark \ref{rem:2013.09.06},
 apply Lemma
  \ref{lemm:formula_derivate_composte}, 
and consider \eqref{eq:formula_derivate_composte}.
\end{proof}
Let $X_0,X_1$ be Banach spaces, with $X_0$ continuously embedded in $X_1$, and let $U$ be an open subset of $X_1$.
Let $n\in \mathbb{N}$, $n\geq 1$,
 $\mathbf{x}_n\coloneqq \{x_1,\ldots,x_n\}\subset X_0^n$,
 $j\in \{1,\ldots,n\}$.
Then
 \begin{itemize}
\itemsep=-1mm
 \item  $P^j(\mathbf{x}_n)$ denotes the set of
partitions of $\mathbf{x}_n$
 in $j$ non-empty
subsets.
\item \label{2016-02-25:04}
If $f\in \Gat{U}{X_1}{X_2}{n}$
  and
$\mathbf{q}\coloneqq \{y_1,\ldots,y_j\}\subset \mathbf{x}_n$,  then
 $\partial^j_{\mathbf{q}}f(u)$ denotes the derivative
$\partial^j_{y_1\ldots y_j}f(u)$
(\footnote{By
 Remark \ref{rem:schwarz}, there is no
ambiguity due to the fact that $\mathbf{q}$ is not ordered.}).
\item 
 $|\mathbf{q}|$  denotes the cardinality of $\mathbf{q}$.
\end{itemize}

\begin{proposition}[Fa\`a di Bruno's formula]\label{teo:faa-di-bruno}
  Let $n\geq 1$. 
Let $X_0,X_1,X_2,X_3$ be Banach
  spaces, with $X_0$ continuously embedded in $X_1$, and let $U$ be an open subset of $X_1$. 
If $f\in \Gat{U}{X_2}{X_0}{n}$ and
$g\in \Gatot{X_2}{X_3}{n}$, then $g\circ f\in \Gat{U}{X_3}{X_0}{n}$.
  Moreover
     \begin{equation}\label{eq:formula_derivata_gat_composta}
    \partial^j_{\mathbf{x}_j}g\circ f(u)=
    \sum_{i=1}^j \sum_{\{\mathbf{p}^i_1,\ldots,\mathbf{p}^i_i\}\in P^i(\mathbf{x}_j)}\partial^j_{\partial^{|\mathbf{p}^i_1|}_{\mathbf{p}^i_1}f(u)\ldots\partial^{|\mathbf{p}^i_i|}_{\mathbf{p}^i_i}f(u)}g\left(f(u)\right).
  \end{equation}
  for all $u\in U$, $j=1,\ldots, n$, $\mathbf{x}_j=\{x_1,\ldots,x_j\}\subset X_0^j$.
\end{proposition}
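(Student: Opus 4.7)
The plan is to proceed by induction on $n$, establishing the regularity $g\circ f\in \Gat{U}{X_3}{X_0}{n}$ and the explicit formula \eqref{eq:formula_derivata_gat_composta} simultaneously. The engine is Lemma~\ref{lemm:formula_derivate_composte}, which is tailor-made for differentiating expressions of the form $\partial^i_{h_1(u)\ldots h_i(u)}g(f(u))$, that is, precisely the shape of every summand appearing on the right-hand side of \eqref{eq:formula_derivata_gat_composta}.

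The base case $n=j=1$ is immediate: applying Lemma~\ref{lemm:formula_derivate_composte} with $k=0$ delivers both $g\circ f\in \Gat{U}{X_3}{X_0}{1}$ and the identity $\partial_{x_1}(g\circ f)(u)=\partial_{\partial_{x_1}f(u)}g(f(u))$, which coincides with \eqref{eq:formula_derivata_gat_composta} for the unique partition of $\{x_1\}$. For the inductive step, I would assume the claim at order $n-1$, so that each summand of $\partial^{n-1}_{\mathbf{x}_{n-1}}(g\circ f)(u)$ has the form
\[
\gamma(u)=\partial^i_{h_1(u)\ldots h_i(u)}g(f(u)),
\]
with $h_\ell(u)=\partial^{|\mathbf{p}_\ell|}_{\mathbf{p}_\ell}f(u)$ for the blocks of a partition $\{\mathbf{p}_1,\ldots,\mathbf{p}_i\}\in P^i(\mathbf{x}_{n-1})$. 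Since $f\in \Gat{U}{X_2}{X_0}{n}$ and $|\mathbf{p}_\ell|\leq n-1$, Proposition~\ref{prop:2014-07-30:01} ensures $h_\ell\in \Gat{U}{X_2}{X_0}{1}$, so Lemma~\ref{lemm:formula_derivate_composte} applies to $\gamma$. Differentiating in a direction $x_n\in X_0$ then produces one ``outer'' summand
\[
\partial^{i+1}_{\partial_{x_n}f(u),\,h_1(u),\ldots,h_i(u)}g(f(u))
\]
together with $i$ ``inner'' summands, the $\ell$-th of which replaces $h_\ell(u)$ by $\partial_{x_n}h_\ell(u)=\partial^{|\mathbf{p}_\ell|+1}_{\mathbf{p}_\ell\cup\{x_n\}}f(u)$.

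The combinatorial identification is then transparent: the outer summand attaches $\{x_n\}$ as a new singleton block to $\{\mathbf{p}_1,\ldots,\mathbf{p}_i\}$, producing a partition in $P^{i+1}(\mathbf{x}_n)$; the $\ell$-th inner summand inserts $x_n$ into $\mathbf{p}_\ell$, producing a partition in $P^i(\mathbf{x}_n)$. Every partition of $\mathbf{x}_n$ arises in exactly one way from exactly one partition of $\mathbf{x}_{n-1}$ via one of these two operations (remove $x_n$ and check whether its block becomes empty), so collecting terms reproduces exactly \eqref{eq:formula_derivata_gat_composta} at order $n$. The regularity upgrade $g\circ f\in \Gat{U}{X_3}{X_0}{n}$ is obtained along the way: the second part of Lemma~\ref{lemm:formula_derivate_composte} places each summand of $\partial^{n-1}_{\mathbf{x}_{n-1}}(g\circ f)$ in $\Gat{U}{X_3}{X_0}{1}$, and Proposition~\ref{prop:2014-07-30:01} converts this into strong continuous G\^ateaux differentiability up to order $n$. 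The only genuine obstacle is the bookkeeping of the bijection between partitions of $\mathbf{x}_{n-1}$ (equipped with the choice of which block is differentiated, or the choice to add a new singleton) and partitions of $\mathbf{x}_n$; this is purely combinatorial, and no analytic input beyond Lemma~\ref{lemm:formula_derivate_composte} enters.
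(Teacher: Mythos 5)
Your proposal is correct and follows essentially the same route as the paper: induction on $n$, base case via Lemma~\ref{lemm:formula_derivate_composte} with $k=0$, inductive step by applying Lemma~\ref{lemm:formula_derivate_composte} with $k=i$ and $f_\ell=\partial^{|\mathbf{p}_\ell|}_{\mathbf{p}_\ell}f$ to each summand, and the same combinatorial regrouping (new singleton block versus insertion of $x_n$ into an existing block). Your explicit handling of the regularity upgrade through the second part of Lemma~\ref{lemm:formula_derivate_composte} and Proposition~\ref{prop:2014-07-30:01} is a welcome elaboration of a step the paper leaves implicit.
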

\begin{proof}
The proof is standard and is  obtained by induction on $n$ 
 and by making use of
Lemma \ref{lemm:formula_derivate_composte} at each 
step of the inductive argument.
The case $n=1$ is obtained by applying Lemma~\ref{lemm:formula_derivate_composte} with $k=0$.
Now consider the case
 $n\geq 2$.  
By inductive hypothesis, formula
  \eqref{eq:formula_derivata_gat_composta} 
holds true 
for  $j=1,\ldots,n-1$, and we need to prove that it holds for $j=n$.
Let $u\in U$, $x_1,\ldots,x_n\in X_0$, $\mathbf{x}_{n-1}\coloneqq \{x_1,\ldots,x_{n-1}\}$.
Then, by \eqref{eq:formula_derivata_gat_composta},
$$
\partial^{n-1}_{x_1\ldots x_{n-1}}g\circ f(u)= \sum_{i=1}^{n-1}
\sum_{\{\mathbf{p}^i_1,\ldots,\mathbf{p}^i_i\}\in
  P^i(\mathbf{x}_{n-1})}\partial^{n-1}_{\partial^{|\mathbf{p}^i_1|}_{\mathbf{p}^i_1}f(u)\ldots \partial^{|\mathbf{p}^i_i|}_{\mathbf{p}^i_i}f(u)}g\left(f(u)\right).
$$
By applying Lemma \ref{lemm:formula_derivate_composte},
with $k=i$
 and
$f_j=\partial^{|\mathbf{p}^i_j|}_{\mathbf{p}_j^i}f$, for $j=1,\ldots,i$,
 to
 each
member of the sum over $P^i(\mathbf{x}_{n-1})$, 
we
obtain, for all $x_n\in X_0$,
\begin{equation*}
  \begin{split}
  \partial^{n}_{x_1\ldots x_{n-1}x_n}g\circ f(u)&= \sum_{i=1}^{n-1}
  \sum_{\{\mathbf{p}^i_1,\ldots,\mathbf{p}^i_i\}\in
    P^i(\mathbf{x}_{n-1})} \bigg(
    \partial^n_{\partial_{x_n}f(u)\partial^{|\mathbf{p}^i_1|}_{\mathbf{p}^i_1}f(u)\ldots \partial^{|\mathbf{p}^i_i|}_{\mathbf{p}^i_i}f(u)}g(f(u))\\
    &\hskip160pt+ \sum_{l=1}^i
      \partial^n_{\partial^{|\mathbf{p}^i_1|}_{\mathbf{p}^i_1}f(u)\ldots\partial_{x_n}\partial^{|\mathbf{p}^i_l|}_{\mathbf{p}^i_l}f(u)\ldots\partial^{|\mathbf{p}^i_i|}_{\mathbf{p}^i_i}f(u)}g(f(u))
    \bigg)\\
    &=\sum_{i=1}^{n-1} \left(
      \sum_{\substack{\{\mathbf{p}^{i+1}_1,\ldots,\mathbf{p}^{i+1}_{i+1}\}\in
          P^{i+1}(\mathbf{x}_{n})\colon \\\{x_n\}\in
          \{\mathbf{p}^{i+1}_1,\ldots,\mathbf{p}^{i+1}_{i+1}\}}}
      \partial^n_{\partial^{|\mathbf{p}^{i+1}_1|}_{\mathbf{p}^{i+1}_1}f(u)\ldots
\partial^{|\mathbf{p}^{i+1}_{i+1}|}_{\mathbf{p}^{i+1}_{i+1}}f(u)}
      g(f(u))\right.\\
    &\hskip160pt\left.  +
      \sum_{\substack{\{\mathbf{p}^{i}_1,\ldots,\mathbf{p}^{i}_{i}\}\in
          P^{i}(\mathbf{x}_{n})\colon \\\{x_n\}\not\in
          \{\mathbf{p}^{i}_1,\ldots,\mathbf{p}^{i}_{i}\}}}
      \partial^n_{\partial^{|\mathbf{p}^{i}_1|}_{\mathbf{p}^{i}_1}f(u)\ldots\partial^{|\mathbf{p}^{i}_{i}|}_{\mathbf{p}^{i}_i}f(u)}
      g(f(u))
    \right)\\
    &= \sum_{i=1}^n \sum_{\{\mathbf{p}^i_1,\ldots,\mathbf{p}^i_i\}\in
      P^i(\mathbf{x}_n)}\partial^n_{\partial^{|\mathbf{p}^i_1|}_{\mathbf{p}^i_1}f(u)\ldots\partial^{|\mathbf{p}^i_i|}_{\mathbf{p}^i_i}f(u)}g(f(u)).
  \end{split}
\end{equation*}
This concludes the proof of \eqref{eq:formula_derivata_gat_composta}.
\end{proof}

\begin{proposition}\label{propp:2012-05-24-ab}
  Let $n\geq 1$.
Let $X_0,X_1,X_2,X_3$ be Banach
  spaces, with $X_0$ continuously embedded in $X_1$, and let $U$ be an open subset of $X_1$.
Let
\begin{equation*}
  \begin{dcases}
    f\in \Gat{U}{X_2}{X_0}{n}&\\
    f^{(k)}\in \Gat{U}{X_2}{X_0}{n}&\forall k\in \mathbb{N}\\
  g \in \Gatot{X_2}{X_3}{n}\\
  g^{(k)} \in \Gatot{X_2}{X_3}{n}
&\forall k\in \mathbb{N}.
\end{dcases}
\end{equation*}
Suppose that
  \begin{equation*}
    \begin{dcases}
 \lim_{k\rightarrow \infty}
   f^{(k)}(u)=f(u)\\
\lim_{k\rightarrow \infty}\partial ^j_{x_1\ldots 
    x_j}f^{(k)}(u)= \partial ^j_{x_1\ldots x_j}f(u)\qquad
  \mbox{for }j=1,\ldots,n,
       \end{dcases}
  \end{equation*}
  uniformly for $u$ on compact subsets of
   $U$ and $x_1,\ldots,x_j$ on compact subsets of $X_0$, and that
   \begin{equation*}
     \begin{dcases}
\lim_{k\rightarrow \infty}   g^{(k)}(x)= g(x)\\
\lim_{k\rightarrow \infty} \partial ^j_{x_1\ldots
     x_j}g^{(k)}(x)= \partial ^j_{x_1\ldots x_j}f(x)
\qquad
  \mbox{for }j=1,\ldots,n,
     \end{dcases}
   \end{equation*}
 uniformly for $x,x_1,\ldots,x_j$ on compact subsets of $X_2$.  Then 
  \begin{equation*}
    \begin{dcases}
  \lim_{k\rightarrow\infty}          g^{(k)}\circ f^{(k)}(u)=g\circ f(u)\\
\lim_{k\rightarrow \infty}      \partial ^j_{x_1\ldots
        x_j} g^{(k)}\circ f^{(k)}(u)=\partial ^j_{x_1\ldots
        x_j}g\circ f(u)
\qquad
  \mbox{for }j=1,\ldots,n,
    \end{dcases}
  \end{equation*}
 uniformly for $u$ on compact
  subsets of $U$ and $x_1,\ldots,x_j$ on compact subsets of $X_0$.
\end{proposition}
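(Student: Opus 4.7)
The plan is to reduce the statement, via Fa\`a di Bruno's formula (Proposition~\ref{teo:faa-di-bruno}), to the convergence of a finite number of summands, each of which is essentially of the form already treated in Lemma~\ref{propp:2012-05-22-ac}. For the case $j=0$, the claim $g^{(k)}\circ f^{(k)}(u) \to g\circ f(u)$ uniformly on compact $K\subset U$ is standard: the hypotheses imply that $\bigcup_k f^{(k)}(K)\cup f(K)$ is contained in a compact $L\subset X_2$ (uniform convergence on $K$ of the continuous functions $f^{(k)}$ to the continuous $f$), and on $L$ one has $g^{(k)}\to g$ uniformly, so composition converges uniformly on $K$.

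For the case $j\geq 1$, I would apply \eqref{eq:formula_derivata_gat_composta} to both $g\circ f$ and $g^{(k)}\circ f^{(k)}$, expressing each $\partial^j_{x_1\ldots x_j}(g\circ f)(u)$ and $\partial^j_{x_1\ldots x_j}(g^{(k)}\circ f^{(k)})(u)$ as a finite sum (indexed by $i\in\{1,\ldots,j\}$ and partitions $\{\mathbf{p}_1,\ldots,\mathbf{p}_i\}\in P^i(\mathbf{x}_j)$) of terms of the type
\[
\partial^i_{\partial^{|\mathbf{p}_1|}_{\mathbf{p}_1}f(u)\ldots \partial^{|\mathbf{p}_i|}_{\mathbf{p}_i}f(u)}g(f(u)).
\]
It then suffices to show that, for each fixed partition, the $k$-indexed summand converges uniformly to its limit as $u$ varies over a compact $K\subset U$ and $x_1,\ldots,x_j$ vary over a compact $C\subset X_0$.

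The key observation for this step is that, by the hypothesis, for every $l=1,\ldots,i$ we have $\partial^{|\mathbf{p}_l|}_{\mathbf{p}_l}f^{(k)}(u)\to \partial^{|\mathbf{p}_l|}_{\mathbf{p}_l}f(u)$ uniformly for $(u,x_1,\ldots,x_j)\in K\times C^j$; since the limit is jointly continuous on $K\times C^j$ by Remark~\ref{rem:2013.09.06}, the union $\bigcup_k \partial^{|\mathbf{p}_l|}_{\mathbf{p}_l}f^{(k)}(K\times C^j)\cup \partial^{|\mathbf{p}_l|}_{\mathbf{p}_l}f(K\times C^j)$ is contained in a compact subset $L_l\subset X_2$. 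Similarly $\bigcup_k f^{(k)}(K)\cup f(K)\subset L_0\subset X_2$ compact. By hypothesis the sequence $\partial^i_{y_1\ldots y_i}g^{(k)}(y_0)\to \partial^i_{y_1\ldots y_i}g(y_0)$ converges uniformly on $L_0\times L_1\times\cdots\times L_i$, and the limit is jointly continuous there (Remark~\ref{rem:2013.09.06} applied to $g$). A standard ``uniform convergence of compositions of sequences with continuous limits on compact sets'' argument now gives the desired convergence for the summand.

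The only real subtlety is the joint-continuity/relative-compactness bookkeeping needed to justify that composing the uniformly convergent inner maps with the uniformly convergent outer derivatives yields again uniform convergence on $K\times C^j$; this is essentially the content already used in the proof of Lemma~\ref{propp:2012-05-22-ac}, so one could phrase the argument as an induction on $n$ that, at each step, invokes that lemma together with Fa\`a di Bruno's identity. Either route leads to the full statement without further calculation.
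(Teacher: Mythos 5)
Your proposal is correct and follows essentially the same route as the paper, whose proof consists precisely of applying the Fa\`a di Bruno formula \eqref{eq:formula_derivata_gat_composta} recursively together with Lemma~\ref{propp:2012-05-22-ac}. The compactness bookkeeping you spell out (relative compactness of $\bigcup_k f^{(k)}(K)\cup f(K)$ and of the images of the derivatives, followed by uniform convergence of compositions on compact sets) is exactly the content the paper leaves implicit in that lemma.
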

\begin{proof}
  Use recursively formula \eqref{eq:formula_derivata_gat_composta} and
  Lemma \ref{propp:2012-05-22-ac}.
\end{proof}

\subsection{Contractions in Banach spaces: survey of basic results}
\label{2017-04-28:01}


In this section,
we assume that $X$ and $Y$ are Banach spaces, and that  $U$ is an open subset of $X$.
We recall that, if $\alpha\in [0,1)$
and 
$h\colon U\times Y\rightarrow Y$,
then $h$ is said to be a  \emph{parametric $\alpha$-contraction}
if 
$$
|h(u,y)-h(u,y')|_Y\leq \alpha |y-y'|\qquad
\forall u\in U,\ \forall y,y'\in Y.
$$
By the Banach contraction principle,
 to any such $h$ we can associate
a uniquely defined map $\varphi\colon U\rightarrow Y$ such that $h(u,\varphi(u))=\varphi(u)$ for all $u\in U$.
We
refer to
$\varphi$ as to \emph{the fixed-point map associated to $h$}.
For future reference, we summurize some basic continuity properties that $\varphi$ inherites from~$h$.

\smallskip
The following lemma 
  can be found in \cite[p.\ 13]{Granas2003}.

\begin{lemma}\label{2016-02-25:02}
Let $ \alpha \in[0,1)$ and let  $h(u,\cdot)\colon  U\times Y\to Y$,  $h_n(u,\cdot): U\times Y\to Y$, for $n\in\mathbb{N}$, be parametric $\alpha$-contractions.
Denote by $\varphi$ (resp.\ $\varphi_n$) the fixed-point map associated to $h$ (resp.\ $h_n$).
\begin{enumerate}[(i)]
\item \label{2016-02-23:02}
 If $h_n\rightarrow h$ pointwise on $U\times Y$, then $\varphi_n\rightarrow \varphi$ pointwise on $U$.
\item\label{2016-02-23:03}
If $A\subset U$ is a set and if there exists an increasing concave function $w$ on $\mathbb{R}^+$ such that $w(0)=0$ and
\begin{equation}\label{2016-02-23:01}
|h(u,y)-h(u',y)|_Y \leq w(|u-u'|_X)\qquad \forall  u,u'\in A,\ \forall y\in Y,
\end{equation}
then
\[
|\varphi(u)-\varphi(u')|_Y \leq \frac{1}{1-\alpha}w(|u-u'|_X)
\qquad \forall \, u,u'\in A.
\]
\item \label{2016-02-25:01} If $h$ is continuous, then $\varphi$ is continuous.
\end{enumerate}
\end{lemma}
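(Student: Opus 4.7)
The plan is to handle all three parts by the same elementary device: write $\varphi(u)=h(u,\varphi(u))$ (and analogously for $\varphi_n$), split the desired difference via the triangle inequality into a ``vertical'' term (same first argument of $h$, differing second arguments) and a ``horizontal'' term (same second argument, differing first arguments), use the $\alpha$-contraction to absorb the vertical term into the left-hand side, and close with the relevant hypothesis on $h$.

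For part (i), I would fix $u\in U$ and write
\[
|\varphi_n(u)-\varphi(u)|_Y \leq |h_n(u,\varphi_n(u))-h_n(u,\varphi(u))|_Y + |h_n(u,\varphi(u))-h(u,\varphi(u))|_Y.
\]
The first summand is at most $\alpha|\varphi_n(u)-\varphi(u)|_Y$ by $\alpha$-contractivity of $h_n$, so rearranging yields
\[
(1-\alpha)|\varphi_n(u)-\varphi(u)|_Y \leq |h_n(u,\varphi(u))-h(u,\varphi(u))|_Y,
\]
and the right-hand side tends to $0$ by pointwise convergence $h_n\to h$ evaluated at the single point $(u,\varphi(u))$.

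For part (ii), for $u,u'\in A$ I would decompose
\[
|\varphi(u)-\varphi(u')|_Y \leq |h(u,\varphi(u))-h(u,\varphi(u'))|_Y + |h(u,\varphi(u'))-h(u',\varphi(u'))|_Y,
\]
bound the first term by $\alpha|\varphi(u)-\varphi(u')|_Y$ and the second by $w(|u-u'|_X)$ via the stated hypothesis, and then solve for $|\varphi(u)-\varphi(u')|_Y$. Note that neither monotonicity nor concavity of $w$ actually enters this inequality; those properties are packaged into the statement so that $\tfrac{1}{1-\alpha}w$ is itself a modulus of continuity of the same qualitative type as $w$, which is what matters for later applications.

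For part (iii), I would apply exactly the splitting of part (ii) but with $u'=u_0$ held fixed and $u\to u_0$: the vertical term is absorbed by contraction, and the horizontal term $|h(u,\varphi(u_0))-h(u_0,\varphi(u_0))|_Y$ tends to $0$ by joint continuity of $h$ at $(u_0,\varphi(u_0))$. No step is genuinely delicate; the only point requiring mild care is to arrange the triangle split so that the two arguments of $h_n$ (respectively $h$) differ in only one slot when the contraction inequality is applied, so that the absorption into the left-hand side is legitimate.
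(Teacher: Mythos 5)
Your proposal is correct, and for parts (i) and (ii) it is essentially identical to the paper's proof: the two displayed inequalities the paper derives, namely $|\varphi_n(u)-\varphi(u')|\leq \frac{1}{1-\alpha}|h_n(u,\varphi(u))-h(u',\varphi(u'))|$ and $|\varphi(u)-\varphi(u')|\leq \frac{1}{1-\alpha}|h(u,\varphi(u'))-h(u',\varphi(u'))|$, are exactly the result of your triangle split followed by absorption of the contraction term. The only divergence is in part (iii): the paper does not apply the part-(ii) inequality directly, but instead fixes $u_n\to u'$, defines translated contractions $h_n(u,y)\coloneqq h(u+u_n-u',y)$ on a suitable neighbourhood, and deduces $\varphi(u_n)=\varphi_n(u')\to\varphi(u')$ from part (i). Your route --- absorb the vertical term and let continuity of $u\mapsto h(u,\varphi(u_0))$ kill the horizontal term --- is more direct and needs only separate continuity of $h$ in the first variable at the point $(u_0,\varphi(u_0))$, so it is, if anything, marginally more economical than the paper's detour; both arguments are sound. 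Your side remark that monotonicity and concavity of $w$ are not used in proving (ii) is also accurate.
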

\begin{proof}
Since
$h$ and $h_n$ are
$\alpha$-contractions,
we have
\begin{gather}
    |\varphi_n(u)-\varphi(u')|\leq \frac{|h_n(u,\varphi(u)))-h(u',\varphi(u'))|}{1-\alpha},\label{2016-02-23:04}\\[4pt]
    |\varphi(u)-\varphi(u')|\leq
    \frac{|h(u,\varphi(u'))-h(u',\varphi(u'))|}{1-\alpha},
\label{2016-02-23:05}
\end{gather}
for all $u,u'\in U$.
Then \eqref{2016-02-23:04} yields \emph{(\ref{2016-02-23:02})}
 by taking $u=u'$ and letting $n\to \infty$, and
\eqref{2016-02-23:05}
yields
\emph{(\ref{2016-02-23:03})} by using 
\eqref{2016-02-23:01}.

Regarding \emph{(\ref{2016-02-25:01})},
let $u'\in U$,  $u_n\rightarrow u'$ in $U$,
 let $V\subset U$ be an open set 
containing $u'$,
and let $\bar n\in \mathbb{N}$ such that
$u_n-u'+V\subset U$ for all $n\geq \bar n$.
Define $h_n\colon V\times Y\rightarrow Y$ by $h_n(u,y)\coloneqq h(u+u_n-u',y)$ for all $(u,y)\in V\times Y$.
Then $h_n$ is a parametric $\alpha$-contraction.
Denote by $\varphi_n$ its associated fixed-point map.
Then, by  continuity of $h$ and by
\emph{(\ref{2016-02-23:02})},
$\varphi_n(u)\rightarrow \varphi(u)$ for all $u\in V$.
In particular, $\varphi(u_n)=\varphi_n(u')\rightarrow \varphi(u')$, hence $\varphi$ is continuous.
\end{proof}

\begin{remark}\label{2016-02-25:00}
If $h\colon U\times Y\rightarrow Y$ is a parametric $\alpha$-contraction ($\alpha\in [0,1)$)
belonging to $\Gat{U\times Y}{Y}{\{0\}\times Y}{1}$,
then
\begin{equation}
  \label{eq:2016-04-20:03}
  |  \partial _Yh(u,y)|_{L(Y)}\leq \alpha \qquad \forall u\in U,\ y\in Y,
\end{equation}
where $|\cdot|_{L(Y)}$ denotes the operator norm on $L(Y)$..
Hence $ \partial _Y h(u,y)$ is invertible and the family 
$\{(I- \partial _Yh(u,y))^{-1}\}_{(u,y)\in U\times Y}$ is uniformly bounded in $L(Y)$.
For what follows, it is important to notice that, for all
$y\in Y$,
\begin{equation}
  \label{eq:2016-02-25:03}
  U\times Y\rightarrow Y,\ (u,y') \mapsto (I- \partial _Yh(u,y'))^{-1}y
\end{equation}
is continuous, hence,
because of the formula
$$
(I- \partial _Yh(u,y'))^{-1}y=
\sum_{n\in \mathbb{N}} \left(  \partial _Yh(u,y') \right) ^ny
$$
and of Lebesgue's dominated convergence theorem (for series),
 $(I- \partial _Yh(u,y'))^{-1}y$ is jointly continuous in $u,y',y$.
\end{remark}

The following proposition shows that the fixed-point map
 $\varphi$
associated to a parametric $\alpha$-contraction $h$
inherits from $h$ the strongly continuous G\^ateaux
differentiability.

\begin{proposition}\label{propp:gat.diff.phi}
If 
$h\in \Gatot{U\times Y}{Y}{1}$
is a parametric $\alpha$-contraction
 and if
 $\varphi$ is the fixed-point map associated to $h$,
then 
$\varphi\in \Gatot{U}{Y}{1}$ and
  \begin{equation}\label{eq:derphi}
    \partial _x\varphi(u)=\left(I- \partial _Yh\left(u,\varphi(u)\right)\right)^{-1}\left(\partial _xh\left(u,\varphi(u)\right)\right)\qquad \forall u\in U,\forall x\in X.
  \end{equation}
\end{proposition}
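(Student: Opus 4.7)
The plan is to differentiate implicitly the fixed-point identity $\varphi(u)=h(u,\varphi(u))$: applying $\partial_x$ formally to both sides yields $\partial_x\varphi(u)=\partial_xh(u,\varphi(u))+\partial_Yh(u,\varphi(u)).\partial_x\varphi(u)$, and solving for $\partial_x\varphi(u)$ gives \eqref{eq:derphi}. The real work is to show that the directional derivative exists and that the limits pass through the formula.

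Fix $u\in U$ and $x\in X$ with $[u-\epsilon x,u+\epsilon x]\subset U$ for some $\epsilon>0$. By Lemma~\ref{2016-02-25:02}\emph{(\ref{2016-02-25:01})}, $\varphi$ is continuous, so $\varphi(u+tx)-\varphi(u)\to 0$ as $t\to 0$. I would split
\[
\varphi(u+tx)-\varphi(u) = \bigl[h(u+tx,\varphi(u+tx))-h(u+tx,\varphi(u))\bigr]+\bigl[h(u+tx,\varphi(u))-h(u,\varphi(u))\bigr]
\]
and apply the fundamental theorem of calculus along the two affine segments. Using strongly continuous G\^ateaux differentiability of $h$, the first bracket equals $K_t(\varphi(u+tx)-\varphi(u))$, where $K_t\in L(Y)$ is defined pointwise by
\[
K_t y \;:=\; \int_0^1 \partial_Y h\bigl(u+tx,\varphi(u)+\theta(\varphi(u+tx)-\varphi(u))\bigr).y\,d\theta,\quad y\in Y,
\]
and the second bracket equals $\int_0^t \partial_x h(u+sx,\varphi(u))\,ds$. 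By \eqref{eq:2016-04-20:03}, $|K_t|_{L(Y)}\le\alpha$, hence $I-K_t$ is invertible with $|(I-K_t)^{-1}|_{L(Y)}\le 1/(1-\alpha)$, giving
\[
\frac{\varphi(u+tx)-\varphi(u)}{t}=(I-K_t)^{-1}\,\frac{1}{t}\int_0^t\partial_x h(u+sx,\varphi(u))\,ds.
\]

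To pass $t\to 0$, strong continuity of $\partial h$ together with $\varphi(u+tx)\to\varphi(u)$ gives, via dominated convergence, $K_t y\to\partial_Y h(u,\varphi(u)).y$ for each $y\in Y$; likewise the time average converges to $\partial_x h(u,\varphi(u))$ in $Y$. Using the resolvent identity
\[
(I-K_t)^{-1}-(I-\partial_Y h(u,\varphi(u)))^{-1}=(I-K_t)^{-1}\bigl(\partial_Y h(u,\varphi(u))-K_t\bigr)(I-\partial_Y h(u,\varphi(u)))^{-1}
\]
together with the uniform bound $1/(1-\alpha)$ on the inverses, one gets $(I-K_t)^{-1}y\to(I-\partial_Y h(u,\varphi(u)))^{-1}y$ strongly for each $y$, and combining the two convergences yields \eqref{eq:derphi}. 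The right-hand side of \eqref{eq:derphi} is continuous in $u$ for each fixed $x$ by continuity of $\varphi$, strong continuity of $\partial h$, and the joint-continuity assertion of Remark~\ref{2016-02-25:00}, so \cite[Lemma~4.1.5]{Flett1980} gives the linearity of $x\mapsto\partial_x\varphi(u)$; the resulting linear map is bounded because it factors as the composition $(I-\partial_Y h(u,\varphi(u)))^{-1}\circ\partial_X h(u,\varphi(u))$ of bounded operators. The required strong continuity $u\mapsto\partial\varphi(u)\in L_s(X,Y)$ is exactly the pointwise-in-$x$ continuity already established, so $\varphi\in\Gatot{U}{Y}{1}$.

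The main technical hurdle is the correct handling of $K_t$: because $\partial_Y h$ is only strongly continuous in its arguments, $K_t$ is not an operator-valued Bochner integral but a bounded operator whose definition, contraction estimate, and strong-operator continuity in $t$ have to be extracted pointwise in $y$, exploiting \eqref{eq:2016-04-20:03} and the Neumann-series style argument already used in Remark~\ref{2016-02-25:00}.
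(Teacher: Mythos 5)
Your proof is correct and is essentially the standard argument that the paper itself defers to by citation (to \cite[Lemma 2.9]{DaPrato2014} and \cite[Proposition C.0.3]{Cerrai2001}): implicit differentiation of the fixed-point identity, a mean-value operator $K_t$ with $|K_t|_{L(Y)}\le\alpha$ built from $\partial_Y h$, inversion of $I-K_t$ via the uniform Neumann bound, and strong convergence of the resolvents, exactly as in Remark~\ref{2016-02-25:00} and Lemma~\ref{2016-02-25:02}. The only blemish is a harmless sign slip in the resolvent identity, which should read $(I-K_t)^{-1}\bigl(K_t-\partial_Yh(u,\varphi(u))\bigr)(I-\partial_Yh(u,\varphi(u)))^{-1}$.
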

\begin{proof}
For 
the proof, see
\cite[Lemma 2.9]{DaPrato2014}, or
 \cite[Proposition C.0.3]{Cerrai2001}, taking into account also \cite[Remark C.0.4]{Cerrai2001},
Lemma \ref{2016-02-25:02}\emph{(\ref{2016-02-25:01})}, 
Remark \ref{2016-02-25:00}.
\end{proof}

\subsection{G\^ateaux differentiability of order $n$ of fixed-point maps}\label{2016-02-22:08}

In this section we provide a result for the G\^ateux differentiability
 up to a generic order $n$
 of a fixed-point map $\varphi$ associated to a parametric $\alpha$-contraction $h$, under the assumption that $h$ is G\^ateaux differentiable only with respect to some invariant subspaces of the domain.

The main result of this section is Theorem \ref{teo:derivabilita.punto.fisso}, which is suitable to be applied to
mild solutions of SDEs (Section \ref{2016-04-20:02}).
When $n=1$,
Theorem
\ref{teo:derivabilita.punto.fisso}
reduces to
Proposition
\ref{propp:gat.diff.phi}.
In the case $n=2$,
Theorem
\ref{teo:derivabilita.punto.fisso}
is also well-known, and a proof can be found in
\cite[Proposition C.0.5]{Cerrai2001}.
On the other hand, when the order of differentiability $n$ is generic,
the fact that the parametric $\alpha$-contraction is assumed to be differentiable only with respect to certain subspaces makes non-trivial
the proof
 of the theorem.
To our knowledge, a reference for the case $n\geq 3$ is not available in the literature.
The main issue consists in providing a precise formulation of the statement, with its assumptions,
that 
can be proved
 by induction.

\medskip
For the sake of readability, we collect the assumptions of Theorem \ref{teo:derivabilita.punto.fisso}
in the following

\begin{assumption}\label{2016-02-24:00}
${}$
  \begin{enumerate}[(1)]
  \item \label{2016-02-24:01}
   $n\geq 1$ and $\alpha\in [0,1)$;
 \item \label{2016-02-24:02} $X$ is a  Banach space and $U$
is an open subset of $X$.
\item \label{2016-02-24:03}
  $Y_1\supset Y_2\supset \ldots \supset Y_n$
is a decreasing sequence of Banach spaces, with
norms \mbox{$|\cdot|  _1$}, \ldots , $|  \cdot  |  _n$, respectively.
\item \label{2016-02-24:04}
For $k=1,\ldots,n$ and $j=1,2,\ldots,k$, 
the canonical embedding of $Y_k$ into $Y_j$, denoted by
  $i_{k,j}\colon Y_k\to Y_{j}$,
is continuous.
\item \label{2016-02-24:05}
 $h_1\colon U\times Y_1\to Y_1$
is a function such that
 $h_1\left(U\times Y_k\right)\subset Y_k$ for  $k=2,\ldots,n$. 
For $k=2,\ldots,n$,
we denote by $h_k$ the induced function
\begin{equation}
  \label{eq:2016-04-20:04}
  h_k\colon U\times Y_k\rightarrow Y_k,\  (u,y)\mapsto h_1(u,y).
\end{equation}
\item \label{2016-02-24:06}
For $k=1,\ldots,n$, $h_k$ is continuous and satisfies
  \begin{equation}\label{eq:cond_contrazione}
    \left |  h_k(u,y)-h_k(u,y')\right | _k\leq \alpha |  y-y' | _k\qquad \forall u\in U,\ \forall y,y'\in Y_k.
  \end{equation}
  \item \label{2016-02-24:07}
For $k=1,\ldots,n$, $h_k\in \Gat{U\times Y_k}{Y_k}{X\times \{0\}}{n}$.
  \item \label{2016-02-24:09}
For $k=1,\ldots,n-1$, 
$h_k\in \Gat{U\times Y_k}{Y_k}{X\times Y_{k+1}}{n}$
\item \label{2016-02-24:08}
  For
$k=1,\ldots,n$,
 $j=1,\ldots,    n-1$, for all $u\in U$, $z_1,\ldots,z_j\in X$,  $y,z_{j+1}\in Y_k$, and for all permutations $\sigma$ of $\{1,\ldots,j+1\}$,
the 
directional derivative $\partial ^{j+1}_{z_{\sigma(1)}\ldots z_{\sigma(j+1)}}
h_k(u,y)$
 exists,
and 
\begin{equation}\label{2016-02-24:11}
U\times Y_k\times X^j\times Y_k\to Y_{k},\ (u,y,z_1,\ldots,z_j,z_{j+1})\mapsto \partial ^{j+1}_{z_{\sigma(1)}\ldots z_{\sigma(j)}z_{\sigma(j+1)}}h_{k}(u,y)
\end{equation}
is continuous.
\newcounter{c1}
\setcounter{c1}{\value{enumi}}
\end{enumerate}
\end{assumption}

\begin{theorem}\label{teo:derivabilita.punto.fisso}
Let Assumption~\ref{2016-02-24:00} be satisfied and let $\varphi\colon U\rightarrow Y_1$ denote the fixed-point function associated to the parametric $\alpha$-contraction $h_1$.
Then,
for $j=1,\ldots,n$, 
we have
$\varphi\in \Gatot{U}{Y_{n-j+1}}{j}$ and,
for all $u\in U$, $x_1,\ldots,x_j\in X$,
$\partial ^j_{x_1\ldots x_j}\varphi(u)$
is given by the formula
  \begin{equation}\label{eq:formula_fixed_point_derivatives}
    \begin{split}
 \hskip-13pt         \partial ^j_{x_1\ldots x_j}\varphi(u)=&
    \left(I- \partial _{Y_1}h_1(u,\varphi(u))\right)^{-1}
      \partial ^j_{x_1\ldots x_j}h_1(u,\varphi(u))\\
&\hskip-40pt+
      \sum_{\substack{\mathbf{x}\in 2^{\{x_1,\ldots,x_j\}}
\\ \mathbf{x}\neq
          \emptyset}}\sum_{i=\max\{1,2-j+|\mathbf{x}|\}}^{|\mathbf{x}|}\sum_{\substack{\mathbf{p}\in
          P^i(\mathbf{x})\\\mathbf{p}=(\mathbf{p}_{1},\ldots,\mathbf{p}_i)}}
       \left(I- \partial _{Y_1}h_1(u,\varphi(u))\right)^{-1}\partial^j [\mathbf{x}^c,\mathbf{p}]h_1(u,\varphi(u))
    \end{split}
  \end{equation}
  where
  $2^{\{x_1,\ldots,x_i\}}$
 is  the power  set 
  of $ \{x_1,\ldots,x_i\}$, 
$P^i(\mathbf{x})$ is the
  set of partitions of $\mathbf{x}$ in $i$ non-empty parts,
$\mathbf{x}^c\coloneqq\{x_1,\ldots, x_j\}\setminus \mathbf{x}$,
 and 
$
\partial^j [\mathbf{x}^c,\mathbf{p}]\coloneqq
 \partial ^{j-|\mathbf{x}|}_{\mathbf{x}^c}
 \partial ^{|\mathbf{x}|}
_{   \partial ^{|\mathbf{p}_1|}_{\mathbf{p}_{1}}\varphi(u),
    \ldots,
    \partial ^{|\mathbf{p}_{i}|}_{\mathbf{p}_{i}}
\varphi(u)
}
$~(\,\footnote{Recall notation at p.\ \pageref{2016-02-25:04}.}).
\end{theorem}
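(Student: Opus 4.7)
The plan is to proceed by induction on $j \in \{1, \ldots, n\}$, at each step differentiating the fixed-point identity $\varphi(u) = h_1(u, \varphi(u))$ once more, expanding via Fa\`a di Bruno's formula (Proposition~\ref{teo:faa-di-bruno}), and then solving for the top-order derivative by inverting $I - \partial_{Y_1} h_1(u, \varphi(u))$; boundedness of this inverse on each $Y_k$ comes from Remark~\ref{2016-02-25:00}. As a preliminary I observe that $\varphi(u) \in Y_n$ for every $u$: each $h_k$ is an $\alpha$-contraction on $Y_k$ by Assumption~\ref{2016-02-24:00}\emph{(\ref{2016-02-24:06})}, and the uniqueness of the respective fixed-points together with the continuous embeddings $i_{k,j}$ forces them all to coincide with $\varphi(u)$.

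For the base case $j=1$, Proposition~\ref{propp:gat.diff.phi} supplies $\varphi \in \mathcal{G}^1(U, Y_1)$ along with the identity $\partial_x \varphi(u) = (I - \partial_{Y_1} h_1(u, \varphi(u)))^{-1} \partial_x h_1(u, \varphi(u))$. To upgrade the target from $Y_1$ to $Y_n$, I use that $\partial_x h_n(u, \varphi(u)) \in Y_n$ by Assumption~\ref{2016-02-24:00}\emph{(\ref{2016-02-24:07})}, that $I - \partial_{Y_n} h_n(u, \varphi(u))$ is invertible on $Y_n$ by Remark~\ref{2016-02-25:00}, and that the two inverses agree on $Y_n$ by uniqueness; strong continuity of $\partial_x \varphi$ with values in $Y_n$ then follows from Assumption~\ref{2016-02-24:00}\emph{(\ref{2016-02-24:08})} via Proposition~\ref{prop:2014-07-30:01}.

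For the inductive step, suppose the conclusion holds for all orders up to $j-1$, so that every lower-order derivative $\partial^l_{\mathbf{q}} \varphi(u)$ takes values in $Y_{n-l+1} \subset Y_2$. Differentiating the fixed-point identity once more, Lemma~\ref{lemm:formula_derivate_composte} ensures that each new direction $x_i$ either enters a free $X$-slot of $h_1$ or is absorbed into one of the $\varphi$-blocks, raising its order by one. Iterating and reorganising through Proposition~\ref{teo:faa-di-bruno} produces an identity of the shape
\[
(I - \partial_{Y_1} h_1(u, \varphi(u))) \, \partial^j_{x_1 \ldots x_j} \varphi(u) = \partial^j_{x_1 \ldots x_j} h_1(u, \varphi(u)) + R_j(u, x_1, \ldots, x_j),
\]
where $R_j$ is precisely the power-set-times-partition sum appearing on the right-hand side of \eqref{eq:formula_fixed_point_derivatives}; the bound $i \geq \max\{1, 2-j+|\mathbf{x}|\}$ records the fact that no partition block $\mathbf{p}_l$ can exhaust $\{x_1, \ldots, x_j\}$. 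Since every $Y$-slot entry of the mixed derivatives of $h_1$ in $R_j$ lies in $Y_2$, Assumptions~\ref{2016-02-24:00}\emph{(\ref{2016-02-24:07})}--\emph{(\ref{2016-02-24:09})} make each term well defined; inverting and arguing as in the base case that $(I - \partial_{Y_1} h_1(u, \varphi(u)))^{-1}$ preserves $Y_{n-j+1}$ delivers \eqref{eq:formula_fixed_point_derivatives}, and joint continuity of $\partial^j \varphi$ follows from Remark~\ref{2016-02-25:00}, Assumption~\ref{2016-02-24:00}\emph{(\ref{2016-02-24:08})} and Proposition~\ref{prop:2014-07-30:01}. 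The main obstacle throughout is the combinatorial identity driving this step --- matching the partition sums produced by successive chain-rule applications with the compact form in \eqref{eq:formula_fixed_point_derivatives} while simultaneously tracking which $Y_k$ each factor inhabits, so that the inverse operator lands exactly in $Y_{n-j+1}$ --- which is the very subtlety the author flags as the difficulty of formulating the statement so that an inductive proof goes through.
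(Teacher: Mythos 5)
Your overall strategy (induct, differentiate the order-$(j-1)$ identity once more, isolate the linear term and invert $I-\partial_{Y_1}h_1(u,\varphi(u))$) is the right one and matches the paper in spirit, but the step in which you produce the order-$j$ identity contains a genuine circularity. You propose to obtain
$(I-\partial_{Y_1}h_1(u,\varphi(u)))\,\partial^j_{x_1\ldots x_j}\varphi(u)=\partial^j_{x_1\ldots x_j}h_1(u,\varphi(u))+R_j$
by ``differentiating the fixed-point identity once more'' via Lemma~\ref{lemm:formula_derivate_composte} and Proposition~\ref{teo:faa-di-bruno}. Both of these results require, as a hypothesis, that the inner functions already possess the directional derivative being taken: the order-$(j-1)$ identity contains the term $\partial_{\partial^{j-1}_{x_1\ldots x_{j-1}}\varphi(u)}h_1(u,\varphi(u))$, and applying the chain rule to it in the direction $x_j$ presupposes the existence of $\partial_{x_j}\partial^{j-1}_{x_1\ldots x_{j-1}}\varphi(u)$ --- which is exactly what the inductive step must establish. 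In addition, Proposition~\ref{teo:faa-di-bruno} requires $g\in\Gatot{X_2}{X_3}{n}$, i.e.\ full differentiability of $h_1$ on $U\times Y_1$, whereas Assumption~\ref{2016-02-24:00} only grants differentiability with respect to the subspaces $X\times\{0\}$ and $X\times Y_{k+1}$; this is precisely the feature that, as the paper remarks, makes the generic-$n$ case non-trivial, and it prevents a direct appeal to the Fa\`a di Bruno formula for the composition $u\mapsto h_1(u,\varphi(u))$.

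The paper's proof avoids this by never differentiating the identity as such: it forms the \emph{difference quotient} of the order-$(n-1)$ formula (written in the pre-inverted form $\partial^{n-1}\varphi(u)=\partial_{Y_1}h_1(u,\varphi(u)).\partial^{n-1}\varphi(u)+\mathcal S(u)$), represents each increment by an integral of already-known derivatives, splits off the term $\partial_{Y_1}h_1(u,\varphi(u)).(\partial^{n-1}\varphi(u+\varepsilon x_n)-\partial^{n-1}\varphi(u))$, moves it to the left-hand side, and shows that the remaining quotient converges; only then does invertibility of $I-\partial_{Y_1}h_1$ yield the \emph{existence} of $\partial^n\varphi$ together with the formula. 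Note also that the paper inducts on $n$ via the shifted system $\wt h_k:=h_{k+1}$ on $\wt Y_k:=Y_{k+1}$, which is what places $\partial^j\varphi$ in $Y_{n-j+1}$ for $j\le n-1$; your uniqueness-of-fixed-points observation only settles the order-zero membership $\varphi(u)\in Y_n$. To repair your argument you would need to replace the chain-rule step by this difference-quotient analysis (or prove a chain-rule lemma whose conclusion, rather than hypothesis, is the differentiability of the composed top-order term).
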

\begin{proof}\label{2016-02-26:00}
The proof is 
 by induction on $n$. The case $n=1$ is
provided
 by Proposition~\ref{propp:gat.diff.phi}. 

Let $n\geq 2$.  
Clearly, it is sufficient to prove that 
$\varphi\in \Gatot{U}{Y_n}{n}$
and that \eqref{eq:formula_fixed_point_derivatives} holds true for $j=n$.
Since we are assuming that the theorem holds true for $n-1$, we can apply it with the data
$$
\wt h_1\colon U\times \wt Y_2\rightarrow \wt Y_2,\ 
\ldots,
\wt h_{n-1}\colon U\times \wt Y_n\rightarrow \wt Y_n,
$$
 where
 $\widetilde  h_k\coloneqq 
h_{k+1}$, $\widetilde  Y_k\coloneqq Y_{k+1}$, for $k=1,\ldots,n-1$. 
According to the claim,
the fixed-point
 function $\widetilde  \varphi$ of $\widetilde  h_1$
belongs to $\Gatot{U}{\wt Y_{(n-1)-j+1}}{j}$, for $j=1,\ldots,n-1$,
and formula
\eqref{eq:formula_fixed_point_derivatives} holds true for $\widetilde  \varphi$ and $j=1,\ldots,n-1$. Since $\varphi(u)=(i_{2,1}\circ \widetilde  \varphi) (u)$, for  $u\in U$,
 we have $\varphi\in \Gatot{U}{\wt Y_{n-j}}{j}=\Gatot{U}{Y_{n-j+1}}{j}$, for $j=1,\ldots,n-1$, and
  $$
  \partial ^j_{x_1\ldots x_j}\varphi(u)=\partial ^j_{x_1\ldots x_j}\widetilde 
  \varphi(u)\in \widetilde  Y_{n-j}=Y_{n-j+1}, \qquad \forall u\in U, \ \forall x_1,\ldots,x_j\in X.
  $$
Then
  \eqref{eq:formula_fixed_point_derivatives}
holds true for $\varphi$ up to order $j=n-1$.
In particular
$\varphi\in \Gatot{U}{Y_2}{n-1}$, hence,
 for $x_1,\ldots,x_n\in
X$, $\epsilon>0$, we  can write
  \begin{equation}\label{2017-05-23:04}
    \begin{split}
&          \partial ^{n-1}_{x_1\ldots x_{n-1}}\varphi(u+\varepsilon x_n) -
    \partial ^{n-1}_{x_1\ldots x_{n-1}}\varphi(u) \\
&=    \left( \partial _{Y_1}h_1(u+\varepsilon x_n,\varphi(u+\varepsilon x_n)).\partial ^{n-1}_{x_1\ldots
        x_{n-1}}\varphi(u+\varepsilon x_n)-
\partial _{Y_1}h_1
(u,\varphi(u)).\partial ^{n-1}_{x_1\ldots x_{n-1}}\varphi(u)\right)\\
&  \phantom{=:}  + \left(\mc S(u+\varepsilon x_n)-
      \mc S(u)\right)\\
 &\eqqcolon\mathbf{I}+\mathbf{II},
  \end{split}
\end{equation}
  where $\mc S(\cdot)$ denotes the sum
$$
\mathcal{S}(v)\coloneqq
          \partial ^{n-1}_{x_1\ldots x_{n-1}}h_1(v,\varphi(v))
+
      \sum_{\substack{\mathbf{x}\in 2^{\{x_1,\ldots,x_{n-1}\}}
\\ \mathbf{x}\neq
          \emptyset}}\sum_{i=\max\{1,2-(n-1)+|\mathbf{x}|\}}^{|\mathbf{x}|}\sum_{\substack{\mathbf{p}\in
          P^i(\mathbf{x})\\\mathbf{p}=(\mathbf{p}_{1},\ldots,\mathbf{p}_i)}}
       \partial^{n-1} [\mathbf{x}^c,\mathbf{p}]h_1(v,\varphi(v)),
$$
for $v\in U$.
By recalling that $\varphi\in \Gatot{U}{Y_{n-j+1}}{j}$, $j=1,\ldots,n-1$,
hence
  by taking into account
  with respect to which space the derivatives of $\varphi$ are continuous, we write
  \begin{equation}\label{2017-05-23:05}
    \begin{split}
          \mathbf{I}=&\partial _{\partial ^{n-1}_{x_1\ldots x_{n-1}}\varphi(u+\varepsilon
      x_n)}h_1(u+\varepsilon x_n,\varphi(u+\varepsilon x_n))-
    \partial _{\partial ^{n-1}_{x_1\ldots x_{n-1}}\varphi(u)}h_1(u,\varphi(u))\\
    =&\int_0^1 \partial _{x_n}\partial _{\partial ^{n-1}_{x_1\ldots
        x_{n-1}}\varphi(u+\varepsilon x_n)}h_1(u+\theta \varepsilon
    x_n,\varphi(u+\varepsilon x_n))\varepsilon  {d}  \theta
    \\
    &+\int_0^1
    \partial _{\frac{\varphi(u+\varepsilon x_n)-\varphi(u)}{\varepsilon}}\partial _{\partial ^{n-1}_{x_1\ldots x_{n-1}}\varphi(u+\varepsilon x_n)}h_1(u,\varphi(u)+\theta (\varphi(u+\varepsilon x_n)-\varphi(u)))\varepsilon  {d}  \theta \\
&    + \partial _{\partial ^{n-1}_{x_1\ldots x_{n-1}}\varphi(u+\varepsilon x_n)-
      {\partial ^{n-1}_{x_1\ldots x_{n-1}}\varphi(u)} }h_1(u,\varphi(u))\\
    =&\mathbf{I_1}+\mathbf{I_2}+ \partial _{Y_1}h_1(u,\varphi(u)).\left(\partial ^{n-1}_{x_1\ldots
        x_{n-1}}\varphi(u+\varepsilon x_n)- {\partial ^{n-1}_{x_1\ldots
          x_{n-1}}\varphi(u)}\right),
  \end{split}
\end{equation}
  with~(\footnote{The limits should be understood in the
    suitable spaces $Y_k$. 
 For instance, when
    computing $\lim_{\varepsilon\to 0}\frac{\mathbf{I_1}}{\varepsilon}$, the
    object $\partial ^{n-1}_{x_1\ldots x_{n-1}}\varphi(u+\varepsilon x_n)$
    should be considered in the space $Y_2$, which can be done thanks to the
    inductive hypothesis.})
  $$
  \lim_{\varepsilon \to
    0}\frac{\mathbf{I_1}}{\varepsilon}=\partial _{x_n}\partial _{\partial ^{n-1}_{x_1\ldots
      x_{n-1}}\varphi(u)}h_1(u,\varphi(u)) \qquad \mbox{and} \qquad
  \lim_{\varepsilon \to
    0}\frac{\mathbf{I_2}}{\varepsilon}=\partial _{\partial _{x_n}\varphi(u)}\partial _{\partial ^{n-1}_{x_1\ldots
      x_{n-1}}\varphi(u)}h_1(u,\varphi(u)) .
  $$
In a similar way, 
  \begin{equation}\label{2017-05-23:00}
    \begin{split}
         \lim_{\varepsilon\to 0}&\frac{\mathbf{II}}{\varepsilon}=
    \partial _{x_n}\partial ^{n-1}_{x_1\ldots x_{n-1}}h_1(u,\varphi(u))
    +\partial _{\partial _{x_n}\varphi(u)}\partial ^{n-1}_{x_1 \ldots x_{n-1}}h_1(u,\varphi(u))\\
&    +\sum_{\substack{\mathbf{x}\in 2^{\{x_1,\ldots,x_{n-1}\}}\\        \mathbf{x}\neq \emptyset}} \sum_{i=\max\{1,2-(n-1)+|\mathbf{x}|\}}^{|\mathbf{x}|}
    \sum_{\substack{\mathbf{p}\in
        P^i(\mathbf{x})\\\mathbf{p}=(\mathbf{p}_1,\ldots,
\mathbf{p}_i)}}
    \vphantom{\sum_M^M} \partial _{x_n}
    \partial^{n-1} [\mathbf{x}^c,\mathbf{p}]h_1(u,\varphi(u))\\
&+
    \sum_{\substack{\mathbf{x}\in 2^{\{x_1,\ldots,x_{n-1}\}}\\
        \mathbf{x}\neq \emptyset}} \sum_{i=\max\{1,2-(n-1)+|\mathbf{x}|\}}^{|\mathbf{x}|}
    \sum_{\substack{\mathbf{p}\in
        P^i(\mathbf{p})\\\mathbf{p}=(\mathbf{p}_1,\ldots,
\mathbf{p}_i)}}
    \vphantom{\sum_M^M}\left(
\vphantom{\sum_{\substack{\mathbf{p}\in
          P^i(\mathbf{x})\\\mathbf{p}=(\mathbf{p}_1,\ldots,\mathbf{p}_i)}}\partial _{\mathbf{x}
        ^c}^{|\mathbf{x}^c|}}
    \partial _{\partial _{x_n}\varphi(u)}
    \partial^{n-1} [\mathbf{x}^c,\mathbf{p}]h_1(u,\varphi(u))\right.
    \\
&+
\left.    \sum_{j=1}^i\partial _{\mathbf{x}
      ^c}^{|\mathbf{x}^c|}\partial _{\partial ^{|
\mathbf{p}_1|}_{\mathbf{p}_1}\varphi(u)}
    \ldots \partial _{\partial ^{|\mathbf{p}_{j-1}
|}_{\mathbf{p}_{j-1}}\varphi(u)}
    \partial _{\partial _{x_n}\partial ^{|
\mathbf{p}_j
|}_{
\mathbf{p}_j
}\varphi(u)}
    \partial _{\partial ^{|\mathbf{p}_{j+1}|}_{\mathbf{p}
_{j+1}}\varphi(u)} \ldots
    \partial _{\partial ^{|\mathbf{p}_i|}_{\mathbf{p}_i
}\varphi(u)}
    h_1(u,\varphi(u))
\vphantom{\sum_{\substack{p_\pi\in
          P^l(\pi)\\p_\pi=(p_{\pi,1},\ldots,p_{\pi,l})}}\partial _{\pi
        ^c}^{|\pi^c|}}
\right).
  \end{split}
\end{equation}
Notice that
  \begin{equation}\label{2017-05-23:01}
    \begin{split}
       & 
\hskip-8pt          \sum_{\substack{\mathbf{x}\in 
2^{\{x_1,\ldots,x_{n-1}\}}
\\
        \mathbf{x}\neq \emptyset}} \sum_{i=\max\{1,2-(n-1)+|\mathbf{x}|\}}^{|\mathbf{x}|}
    \sum_{\substack{p_\pi\in
        P^i(\mathbf{x})\\\mathbf{p}=
(
\mathbf{p}_1,\ldots,\mathbf{p}_i
)}} \partial _{x_n}
    \partial^{n-1} [\mathbf{x}^c,\mathbf{p}]h_1(u,\varphi(u))\\
&\hskip-10pt =   \sum_{\substack{\mathbf{x}\in 2^{\{x_1,\ldots,x_n\}}
\\
       \mathbf{x}\neq \emptyset\\ x_n\not\in \mathbf{x}}}
    \sum_{i=\max\{1,2-n+|\mathbf{x}|\}}^{|\mathbf{x}|} \sum_{\substack{\mathbf{p}\in
        P^i(\mathbf{x})\\\mathbf{p}=(\mathbf{p}_1,\ldots,
\mathbf{p}_i)}}
    \partial^n [\mathbf{x}^c,\mathbf{p}]h_1(u,\varphi(u)) -\partial _{x_n} \partial _{\partial ^{n-1}_{x_1\ldots
        x_{n-1}}\varphi(u)}h_1(u,\varphi(u))
  \end{split}
\end{equation}
and
  \begin{equation}\label{2017-05-23:02}
    \begin{split}
         \sum_{\substack{\mathbf{x}\in 
2^{\{x_1,\ldots,x_{n-1}\}}\\
        \mathbf{x}\neq \emptyset}} &\sum_{i=\max\{1,2-(n-1)+|\mathbf{x}|\}}^{|\mathbf{x}|}
    \sum_{\substack{\mathbf{p}\in
        P^i(\mathbf{x})\\\mathbf{p}=(\mathbf{p}_1,\ldots,
\mathbf{p}_i)}}
    \partial _{\partial _{x_n}\varphi(u)}\partial^{n-1} [\mathbf{x}^c,\mathbf{p}]h_1(u,\varphi(u))\\
&=      \sum_{\substack{\mathbf{x}\in
2^{\{x_1,\ldots,x_n\}}
\\
        x_n\in\mathbf{x}\\ \mathbf{x}\neq \{x_n\}}} \sum_{i=\max\{1,2-n+|\mathbf{x}|\}}^{|\mathbf{x}|}
    \sum_{\substack{\mathbf{p}\in
        P^i(\mathbf{x})\\\mathbf{p}=(\mathbf{p}_1,\ldots,
\mathbf{p}_i)\\\{x_n\}\in
        \mathbf{p}}} \partial^n [\mathbf{x}^c,\mathbf{p}]h_1(u,\varphi(u))\\
  &\phantom{=}  -\partial _{\partial _{x_n}\varphi(u)}\partial _{\partial ^{n-1}_{x_1\ldots
        x_{n-1}}\varphi(u)}h_1(u,\varphi(u))
  \end{split}
\end{equation}
and
  \begin{equation}\label{2017-05-23:03}
    \begin{split}
          \sum_{\substack{\mathbf{x}\in 
2^{\{x_1,\ldots,x_{n-1}\}}\\
       \mathbf{x} \neq \emptyset}} &\sum_{i=\max\{1,2-(n-1)+|\mathbf{x}|\}}^{|\mathbf{x}|}
    \sum_{\substack{\mathbf{p}\in
        P^i(\mathbf{x})\\\mathbf{p}=(\mathbf{p}_1,\ldots,
\mathbf{p}_i
)}}
    \sum_{j=1}^i
L(\mathbf{p},j;u)
\\
 &=   \sum_{\substack{\mathbf{x}\in 2^{\{x_1,\ldots,x_n\}}\\
        x_n\in \mathbf{x}\\
\mathbf{x}\neq \{x_n\}}} \sum_{i=\max\{1,2-n+|\mathbf{x}|\}}^{|\mathbf{x}|}
    \sum_{\substack{\mathbf{p}\in
        P^i(\mathbf{x})\\\mathbf{p}=(\mathbf{p}_1,\ldots,
\mathbf{p}_i)\\\{x_n\}\not\in
        \mathbf{p}}} \partial^n [\mathbf{x}^c,\mathbf{p}
]h_1(u,\varphi(u))
  \end{split}
\end{equation}
where
$$
L(\mathbf{p},j;u)
\coloneqq 
\\
    \partial _{\mathbf{x} ^c}^{|\mathbf{x}^c|}
    \partial^{|\mathbf{x}|} _{
      \partial ^{|\mathbf{p}_1|}_{\mathbf{p}
        _1}\varphi(u) \ldots
      \partial ^{|\mathbf{p}_{j-1}|}_{
        \mathbf{p}_{j-1}
      \varphi(u)}
      \partial _{x_n}\partial ^{|\mathbf{p}_j
        |}_{\mathbf{p}_j}\varphi(u)
      \partial ^{|\mathbf{p}_{j+1}
        |}_{\mathbf{p}_{j+1}}\varphi(u)
      \ldots
      \partial ^{|\mathbf{p}_i|}_{\mathbf{p}_i
      }\varphi(u)}
    h_1(u,\varphi(u)).
$$
By collecting
\eqref{2017-05-23:00},
\eqref{2017-05-23:01},
\eqref{2017-05-23:02},
\eqref{2017-05-23:03},
we obtain
\begin{equation*}
  \begin{split}
    \lim_{\varepsilon\to 0}\frac{\mathbf{II}}{\varepsilon}=
    &\partial
    _{\partial _{x_n}\varphi(u)}\partial
    ^{n-1}_{x_1\ldots x_{n-1}}h_1(u,\varphi(u)) +\partial
    ^n_{x_1\ldots
      x_{n}}h_1(u,\varphi(u))
-\partial _{\partial _{x_n}\varphi(u)}\partial _{\partial
      ^{n-1}_{x_1\ldots x_{n-1}}\varphi(u)}h_1(u,\varphi(u)) \\
&
+    \sum_{\substack{\mathbf{x}\in 
2^{\{x_1\ldots x_n\}}
\\\mathbf{x}\neq \emptyset\\
        \mathbf{x}\neq\{x_n\}}}\sum_{i=\max\{1,2-n+|\mathbf{x}|\}}^{|\mathbf{x}|}\sum_{\substack{\mathbf{p}\in
        P^i(\mathbf{x})\\\mathbf{p}=(\mathbf{p}_1,\ldots,
\mathbf{p}_i)}}
    \partial^n [\mathbf{x}^c,\mathbf{p}]h_1(u,\varphi(u))
    -\partial _{x_n}\partial _{\partial ^{n-1}_{x_1\ldots x_{n-1}}\varphi(u)}h_1(u,\varphi(u))\\
    =
    &\partial
    ^n_{x_1\ldots x_{n}}h_1(u,\varphi(u))
 -\partial _{\partial _{x_n}\varphi(u)}\partial _{\partial
      ^{n-1}_{x_1\ldots x_{n-1}}\varphi(u)}h_1(u,\varphi(u)) 
\\
& +\sum_{\substack{\mathbf{x}\in 
2^{\{x_1,\ldots,x_n\}}
\\\mathbf{x}\neq
        \emptyset}}\sum_{i=\max\{1,2-n+|\mathbf{x}|\}}^{|\mathbf{x}|}\sum_{\substack{\mathbf{p}\in
        P^i(\mathbf{x})\\\mathbf{p}=(\mathbf{p}_1,\ldots,
\mathbf{p}_i)}}
    \partial^n [\mathbf{x}^c,\mathbf{p}]h_1(u,\varphi(u))
    -\partial _{x_n}\partial _{\partial ^{n-1}_{x_1\ldots x_{n-1}}\varphi(u)}h_1(u,\varphi(u)).
  \end{split}
\end{equation*}
Hence
$$
\lim_{\varepsilon \to 0}\left(\frac{\mathbf{I_1}}{\varepsilon}+
  \frac{\mathbf{I_2}}{\varepsilon}+ \frac{\mathbf{II}}{\varepsilon}\right)=
\sum_{\substack{\mathbf{x}\in 
2^{\{x_1,\ldots,x_n\}}
\\\mathbf{x}\neq
    \emptyset}}\sum_{i=\max\{1,2-n+|\mathbf{x}|\}}^{|\mathbf{x}|}\sum_{\substack{\mathbf{p}\in
    P^i(\mathbf{x})\\\mathbf{p}=(\mathbf{p}_1,\ldots,\mathbf{p}_i)}} \partial^n [\mathbf{x}^c,\mathbf{p}]
h_1(u,\varphi(u))
+\partial ^n_{x_1\ldots x_{n}}h_1(u,\varphi(u)),
$$
and,
by recalling \eqref{2017-05-23:04}, \eqref{2017-05-23:05}, we obtain
\begin{multline*}
  \lim_{\varepsilon \to 0} \left( I-
     \partial _{Y_1}h_1(u,\varphi(u))\right).\frac{\partial ^{n-1}_{x_1\ldots
      x_{n-1}}\varphi(u+\varepsilon x_n)- {\partial ^{n-1}_{x_1\ldots
        x_{n-1}}\varphi(u)} }{\varepsilon}
  \\
  =\sum_{\substack{\mathbf{x}\in 
2^{\{x_1,\ldots,x_n\}}
\\
\mathbf{x}\neq
      \emptyset}}\sum_{i=\max\{1,2-n+|\mathbf{x}|\}}^{|
\mathbf{x}|}\sum_{\substack{\mathbf{p}\in
      P^i(\mathbf{x})\\\mathbf{p}=(\mathbf{p}_1,\ldots,
      \mathbf{p}_i)}}
  \partial^n [\mathbf{x}^c,\mathbf{p}]h_1(u,\varphi(u)) +\partial ^n_{x_1\ldots
    x_{n}}h_1(u,\varphi(u)).
\end{multline*}
Finally, 
we can conclude the proof by recalling that $I- \partial _{Y_1}h_1(u,\varphi(u))$ is invertible with strongly continuous inverse.
\end{proof}

Theorem \ref{teo:derivabilita.punto.fisso} says that
 $\varphi$ is $Y_n$-valued,  continuous as a map from $U$ into $Y_n$,
and,
for $j=1,\ldots,n$,
for all  $u\in U$, 
  $x_1,\ldots,x_j\in X$, 
the directional derivative $\partial ^j_{x_1\ldots x_j}\varphi(u)$
exists, it belongs to $Y_{n-j+1}$, 
the map
 $$
 U\times X^j\to Y_{n-j+1},\ (u,x_1,\ldots,x_j) \mapsto 
\partial ^j_{x_1\ldots x_j}\varphi(u)
$$ 
is continuous,
  and
\eqref{eq:formula_fixed_point_derivatives} holds true.

Formula \eqref{eq:formula_fixed_point_derivatives} can be useful 
e.g.\ when considering
the boundedness of the derivatives of $\varphi$,
or when studying convergences of derivatives under perturbations of  $h$, as
 Corollary~\ref{corr:2012-04-20-aa}
and Proposition~\ref{propp:2012-05-23-aa} show.

\begin{corollary}\label{corr:2012-04-20-aa}
Let Assumption~\ref{2016-02-24:00} be satisfied.
Suppose that there exists
 \mbox{$M>0$} such that
\begin{equation}    \label{eq:2012-04-20-ab}
\hskip-8pt  \begin{dcases}
|  \partial _{y}h_k(u,y') | _k \leq M |y|_k&
\begin{dcases}
  \forall u\in U,\\
\forall y,y'\in Y_k,\ k=1,\ldots,n
\end{dcases}\\
|  \partial ^j_{x_1\ldots x_j}h_k(u,y) | _k \leq M
\prod_{l=1}^j
|x_l|_X
&
\begin{dcases}
  \forall u\in U,\ \forall x_1,\ldots,x_j\in X,\\
 \forall y\in Y_k,\ j,k=1,\ldots,n
\end{dcases}\\
|  \partial ^{j+i}_{ x_1\ldots x_j y_1\ldots y_i}h_k(u,y) | _k \leq M
\prod_{l=1}^j
|x_l|_X
\cdot \prod_{l=1}^i |y_l|_{k+1}
&
\begin{dcases}
  \forall u\in U,\ \forall x_1,\ldots,x_j\in X,\\
 \forall y\in Y_k, \ \forall y_1,\ldots,y_i\in Y_{k+1},\\
 k=1,\ldots,n-1,\\
 j,i=1,\ldots,n-1,\ 1\leq j+i\leq n-1.
\end{dcases}
\end{dcases}
\end{equation}
  Then, for $k=1,\ldots,n$, 
  $$
  \sup_{\substack{u\in U\\
      x_1,\ldots,x_k\in X\\
       |  x_1 |_X  =\ldots = |  x_k |_X  =1}}  |  \partial ^k_{x_1\ldots
    x_k}\varphi(u) | _{n-k+1} \leq C(\alpha,M),
  $$
where $C(\alpha,M)\in \mathbb{R}$ depends only on $\alpha$, $M$.
\end{corollary}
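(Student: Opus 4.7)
The plan is to prove the estimate by induction on $k=1,\ldots,n$, using the explicit formula \eqref{eq:formula_fixed_point_derivatives} for $\partial^k_{x_1\ldots x_k}\varphi(u)$ provided by Theorem~\ref{teo:derivabilita.punto.fisso}. For the base case $k=1$, formula \eqref{eq:derphi} of Proposition~\ref{propp:gat.diff.phi} expresses $\partial_x\varphi(u)$ as $(I-\partial_{Y_1}h_1(u,\varphi(u)))^{-1}\partial_x h_1(u,\varphi(u))$; the second bound in \eqref{eq:2012-04-20-ab} at level $k=n$ gives $|\partial_x h_1(u,\varphi(u))|_n\leq M$, while the contraction property of $h_n$ on $Y_n$ combined with Remark~\ref{2016-02-25:00} shows that the resolvent restricts to $Y_n$ with operator norm at most $(1-\alpha)^{-1}$. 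Hence $|\partial_x\varphi(u)|_n\leq M/(1-\alpha)$.

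For the inductive step with $k\geq 2$, assume the bound holds for every order strictly less than $k$ and apply \eqref{eq:formula_fixed_point_derivatives} with $j=k$, measuring in $|\cdot|_{n-k+1}$. Each summand on the right is the image, under the resolvent $(I-\partial_{Y_1}h_1(u,\varphi(u)))^{-1}$, of a derivative of $h_1$ at $(u,\varphi(u))$ whose arguments are unit $X$-vectors and/or lower-order derivatives $\partial^{|\mathbf{p}_l|}_{\mathbf{p}_l}\varphi(u)$. The crucial combinatorial point is that the lower bound $i\geq\max\{1,\,2-k+|\mathbf{x}|\}$ in the summation forces $|\mathbf{p}_l|\leq k-1$ in every block of every partition that appears: when $|\mathbf{x}|=k$ it imposes $i\geq 2$, so no single block can have size $k$; when $|\mathbf{x}|<k$ the inequality $|\mathbf{p}_l|\leq|\mathbf{x}|\leq k-1$ is trivial. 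Consequently, the inductive hypothesis furnishes $|\partial^{|\mathbf{p}_l|}_{\mathbf{p}_l}\varphi(u)|_{n-|\mathbf{p}_l|+1}\leq C(\alpha,M)$ for each block, and since $n-|\mathbf{p}_l|+1\geq n-k+2$, continuity of the embedding $Y_{n-|\mathbf{p}_l|+1}\hookrightarrow Y_{n-k+2}$ transfers this to the norm $|\cdot|_{n-k+2}$ required on the right-hand side of the mixed bound in \eqref{eq:2012-04-20-ab} at level $m=n-k+1$. Reinterpreting $h_1$ as its restriction $h_m$, and using again that the resolvent preserves $Y_m$ with operator norm at most $(1-\alpha)^{-1}$ by Remark~\ref{2016-02-25:00}, one bounds each summand by a constant multiple of $M(1-\alpha)^{-1}$ times products of already-controlled lower-order factors.

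Since the number of summands in \eqref{eq:formula_fixed_point_derivatives} depends only on $n$ and $k\leq n$, summing up yields an estimate of the form $C(\alpha,M)$ as claimed. The main obstacle is not any isolated estimate but the combinatorial bookkeeping: one must track which copy $h_m$ of $h_1$ is effectively being differentiated, which norm $|\cdot|_m$ controls each factor, and verify that all derivative orders occurring remain within the regime where \eqref{eq:2012-04-20-ab} and the inductive hypothesis are in force. The inequality $|\mathbf{p}_l|\leq k-1$ in every summand, forced by the lower bound on $i$ in the summation range of \eqref{eq:formula_fixed_point_derivatives}, is precisely the gateway that makes the induction close.
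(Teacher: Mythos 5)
Your proposal is correct and follows exactly the paper's (one-line) proof: induction on the order $k$, the explicit formula \eqref{eq:formula_fixed_point_derivatives}, the uniform resolvent bound from \eqref{eq:2016-04-20:03}, and the hypotheses \eqref{eq:2012-04-20-ab}. Your observation that the constraint $i\geq\max\{1,2-k+|\mathbf{x}|\}$ forces every block to have size at most $k-1$ is precisely the point that makes the induction close, and your bookkeeping of which norm $|\cdot|_{n-|\mathbf{p}_l|+1}\hookrightarrow|\cdot|_{n-k+2}$ controls each factor is a faithful (indeed more detailed) rendering of the argument the paper leaves implicit.
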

\begin{proof}
  Reason by induction taking into account
\eqref{eq:formula_fixed_point_derivatives}
and 
\eqref{eq:2016-04-20:03}.
\end{proof}

\begin{proposition}\label{propp:2012-05-23-aa}
Suppose that Assumption~\ref{2016-02-24:00}  holds true 
for a given $h_1$
and
 that $h_1^{(1)},h_1^{(2)},h_1^{(3)}\ldots$ is a sequence of functions, each of which satisfies
Assumption~\ref{2016-02-24:00}, uniformly with respect to the same $n$, $\alpha$.
Let
  $h^{(m)}_k$
denote  the map
associated to  $h^{(m)}_1$ 
according to
\eqref{eq:2016-04-20:04} and let
$\varphi^{(m)}$ denote the fixed-point map associated to the parametric $\alpha$-contraction $h^{(m)}_1$.
Suppose that the following convergences occur.
  \begin{enumerate}[(i)]
  \item\label{2016-04-21:04} For $k=1,\ldots,n$, $y\in Y_k$,
    \begin{equation}\label{eq:2012-05-23-ab}
      \lim_{m\rightarrow \infty}h_k^{(m)}(u,y)= h_k(u,y)\mbox{ in}\  Y_k
    \end{equation}
    uniformly for $u$ on compact subsets
    of $U$;
  \item\label{2016-04-21:05} for $k=1,\ldots,n$,
    \begin{equation}\label{eq:2012-05-23-ac}
      \begin{dcases}
        \lim_{m\rightarrow \infty}      \partial _xh_k^{(m)}(u,y)= \partial _xh_k(u,y)
&\hskip-8pt\mbox{ in } Y_k\\
\lim_{m\rightarrow \infty}      \partial _yh_k^{(m)}(u,y')
= \partial _yh_k(u,y')
&\hskip-8pt\mbox{ in } Y_k
\end{dcases}
\end{equation}
    uniformly for $u$ on compact subsets
    of $U$,
$x$ on compact subsets of $X$, and $y,y'$  on compact subsets of $Y_k$;
  \item\label{2016-04-21:06} for all $k=1,\ldots,n-1$, $u\in U$, $j,i=0,\ldots,n$, $1\leq j+i\leq n$,
    \begin{equation}\label{eq:2012-05-23-ad}
\lim_{m\rightarrow \infty}      \partial ^{j+i}_{x_1\ldots x_jy_1\ldots y_i}
h_k^{(m)}(u,y)=
\partial ^{j+i}_{x_1\ldots x_jy_1\ldots y_i}
h_k(u,y)
\mbox{ in } Y_k
    \end{equation}
    uniformly for $u$ on compact subsets
    of $U$,
$x_1,\ldots,x_j$ on compact subsets
    of $X$,
 $y$ on compact subsets of $Y_k$, $y_1,\ldots,y_i$ on
    compact subsets of $Y_{k+1}$.
  \end{enumerate}
  Then $\varphi^{(m)}\rightarrow \varphi$ uniformly on compact subsets of $Y_n$ and, for all $j=1,\ldots,n$
  \begin{equation}\label{eq:2012-05-23-ae}
\lim_{m\rightarrow \infty}    \partial ^j_{x_1\ldots x_j}\varphi^{(m)}(u)= \partial ^j_{x_1\ldots x_j}\varphi(u)
\mbox{ in } Y_{n-j+1}
  \end{equation}
  uniformly for $u$ on compact
  subsets of $U$ and $x_1,\ldots,x_j$ on compact subsets of $X$.
\end{proposition}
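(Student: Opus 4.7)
The plan is to induct on the order $j \in \{0, 1, \ldots, n\}$ of the derivative, treating $j=0$ (i.e., $\varphi^{(m)} \to \varphi$) as the base case and using formula \eqref{eq:formula_fixed_point_derivatives} from Theorem~\ref{teo:derivabilita.punto.fisso} to propagate convergence from order $j-1$ to order $j$.

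For the base case $j=0$, I would exploit that each $h_n^{(m)}$ is an $\alpha$-contraction in $Y_n$ with the same $\alpha$, so that for any $u \in U$,
\[
|\varphi^{(m)}(u) - \varphi(u)|_n \leq \frac{1}{1-\alpha}\,|h_n^{(m)}(u,\varphi(u)) - h_n(u,\varphi(u))|_n.
\]
By Theorem~\ref{teo:derivabilita.punto.fisso}, $\varphi\colon U \to Y_n$ is continuous, so for any compact $K \subset U$ the image $\varphi(K)$ is compact in $Y_n$. Hypothesis~(\ref{2016-04-21:04}) then yields uniform convergence of $h_n^{(m)}(u, \varphi(u))$ to $h_n(u,\varphi(u))$ on $K$, giving $\varphi^{(m)} \to \varphi$ uniformly on $K$ in the norm of $Y_n$.

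For the inductive step, assume the conclusion holds for all orders up to $j-1 \leq n-1$. Using the formula \eqref{eq:formula_fixed_point_derivatives} for both $\varphi^{(m)}$ and $\varphi$, I would write $\partial^j_{x_1\ldots x_j}\varphi^{(m)}(u) - \partial^j_{x_1\ldots x_j}\varphi(u)$ as a finite sum of terms of the form
\[
\bigl(I - \partial_{Y_1} h_1^{(m)}(u,\varphi^{(m)}(u))\bigr)^{-1}\,\partial^j[\mathbf{x}^c,\mathbf{p}]h_1^{(m)}(u,\varphi^{(m)}(u)) \;-\;\text{analogous expression without } {}^{(m)}.
\]
Each such term is handled by three ingredients. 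First, Remark~\ref{2016-02-25:00} applied in $Y_1$ together with hypothesis~(\ref{2016-04-21:05}) ensures that the resolvents $(I - \partial_{Y_1}h_1^{(m)}(u,\varphi^{(m)}(u)))^{-1}$ converge strongly to $(I - \partial_{Y_1}h_1(u,\varphi(u)))^{-1}$ uniformly in $u$ on compacts of $U$ (using also the base case to control $\varphi^{(m)}(u)$). Second, each argument $\partial^{|\mathbf{p}_l|}_{\mathbf{p}_l}\varphi^{(m)}(u)$ lies in $Y_{n-|\mathbf{p}_l|+1}\hookrightarrow Y_{k+1}$ with $k \leq n-1$ and, by inductive hypothesis, converges to $\partial^{|\mathbf{p}_l|}_{\mathbf{p}_l}\varphi(u)$ in $Y_{n-|\mathbf{p}_l|+1}$ uniformly on compacts, hence in $Y_{k+1}$ on compacts. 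Third, hypothesis~(\ref{2016-04-21:06}) then guarantees that the mixed derivatives $\partial^j[\mathbf{x}^c,\mathbf{p}]h_1^{(m)}$, evaluated at $(u,\varphi^{(m)}(u))$ with increments $\partial^{|\mathbf{p}_l|}_{\mathbf{p}_l}\varphi^{(m)}(u)$, converge uniformly on compacts to their limiting counterpart. Combining these via Proposition~\ref{propp:2012-05-24-ab} (composition stability) and the joint continuity noted in Remark~\ref{2016-02-25:00} yields uniform convergence of each term on compacts in the topology of $Y_{n-j+1}$.

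The main obstacle is purely bookkeeping: one has to track, for every term appearing in \eqref{eq:formula_fixed_point_derivatives}, the exact space $Y_k$ in which each factor takes values and in which its convergence is asserted, then verify that the hypothesis providing that convergence is indeed (\ref{2016-04-21:04}), (\ref{2016-04-21:05}), or~(\ref{2016-04-21:06}) with matching indices $k$, $j$, $i$. The constraint $i \geq \max\{1, 2-j+|\mathbf{x}|\}$ in \eqref{eq:formula_fixed_point_derivatives} is precisely what guarantees $k+1 \leq n$ so that (\ref{2016-04-21:06}) is applicable to the mixed derivatives appearing; once this matching is verified, the argument is a mechanical combination of the uniform-on-compact convergences supplied by the hypotheses with the stability lemmas from Section~\ref{2017-04-24:00}.
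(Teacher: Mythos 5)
Your proposal is correct and follows essentially the same route as the paper: the base case is the uniform-contraction estimate $|\varphi^{(m)}(u)-\varphi(u)|\le\frac{1}{1-\alpha}|h^{(m)}(u,\varphi(u))-h(u,\varphi(u))|$ combined with equicontinuity in $y$ (this is what Lemma~\ref{2016-02-25:02}\emph{(\ref{2016-02-23:02})},\emph{(\ref{2016-02-23:03})} packages), and the inductive step passes the convergence through formula \eqref{eq:formula_fixed_point_derivatives} using the uniform bound and strong continuity of $(I-\partial_{Y_1}h_1^{(m)})^{-1}$ together with hypotheses \emph{(\ref{2016-04-21:05})}--\emph{(\ref{2016-04-21:06})} and the composition-stability lemmas. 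The paper's own proof is only a sketch of exactly this plan, so your more detailed account of the bookkeeping (which space each factor converges in, and why the index constraint in \eqref{eq:formula_fixed_point_derivatives} makes hypothesis \emph{(\ref{2016-04-21:06})} applicable) is consistent with it.
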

\begin{proof}
Notice that 
 \eqref{eq:2012-05-23-ab}
and the fact that each $h_k^{(m)}$
is a parametric $\alpha$-contraction (with the same $\alpha$) imply
 the uniform convergence $h_k^{(m)}\rightarrow h_k$ on compact subsets of $Y_k$.
In particular, the sequence $h_k^{(1)},h^{(2)}_k,h^{(3)}_k,\ldots$ is uniformly equicontinuous on compact sets.
Then, by 
Lemma~\ref{2016-02-25:02}\emph{(\ref{2016-02-23:02})},\emph{(\ref{2016-02-23:03})}, $\varphi^{(m)}\rightarrow \varphi$ in $Y_k$ uniformly on compact subsets of $Y_k$, for $k=1,\ldots,n$.
Moreover,
by
\eqref{eq:2016-04-20:03}, that holds
for all $h_1^{(m)}$
 uniformly in $m$,
we have the boundedness of
$(I- \partial _{Y_1}h_1^{(m)})^{-1}$, uniformly in $m$.
Convergence \eqref{eq:2012-05-23-ae}
is then obtained by  reasoning by induction on
\eqref{eq:formula_fixed_point_derivatives},
taking into account the strong continuity of 
$(I- \partial _{Y_1}h_1)^{-1}$.
\end{proof}

\section{Path-dependent  SDEs in Hilbert spaces}\label{2017-04-25:24}

In this section we study mild solutions of path-dependent SDEs in Hilbert spaces.
In particular,
by applying the results of the previous section, we address
differentiability with respect to the initial datum and stability of the derivatives.
By emulating the arguments of \cite[Ch.\ 7]{DaPrato2004} for the Markovian case and for differentiability up to order 2, we extend the results there provided
to 
the following path-dependent setting and 
to differentiability of generic order $n$.

\smallskip
Let  $H$ and $U$ be real separable Hilbert spaces, with scalar product denoted by $\langle \cdot,\cdot\rangle_H$ and $\langle \cdot,\cdot\rangle_U$,
respectively.
Let
 $ \mathfrak e\coloneqq \{e_n\}_{n\in \mathcal{N}}$
be an orthonormal basis of $H$,
 where $\mathcal{N}=\{1,\ldots,N\}$
if $H$ has dimension $N\in \mathbb{N}\setminus\{0\}$, or $\mathcal{N}=\mathbb{N}$ if $H$ has infinite dimension,
and let
$\mathfrak e'\coloneqq \{e'_m\}_{m\in \mathcal{M}}$
be an orthonormal basis of $U$,
 where $\mathcal{M}=\{1,\ldots,M\}$ if $U$ has dimension $M\in \mathbb{N}\setminus \{0\}$, or $\mathcal{M}=\mathbb{N}$ if $U$ has infinite dimension.
If $\mathbf{x}\colon [0,T]\rightarrow \mathcal{S}$ is a function taking values in any set $\mathcal{S}$ and if $t\in[0,T]$, we denote by $\mathbf{x}_{t\wedge \cdot}$ the function defined by
\begin{equation*}
  \begin{dcases}
    \mathbf{x}_{t\wedge \cdot}(s)\coloneqq\mathbf{x}(s)&s\in[0,t]\\
    \mathbf{x}_{t\wedge \cdot}(s)\coloneqq \mathbf{x}(t)&s\in(t,T].
  \end{dcases}
\end{equation*}
For elements of stochastic analysis in infinite dimension   used hereafter, we refer to \cite{DaPrato2014,Gawarecki2011}.

\smallskip
We begin by considering the SDE
\begin{equation}\label{2016-03-01:00}
\begin{dcases}
  d X_s = \left(AX_s + b\left((\cdot,s),X\right)\right)  d t+ \sigma\left((\cdot,s),X\right)  d W_s & s\in(t, T]\\
X_s   = Y_s & s\in [0,t],
\end{dcases}
\end{equation}
where $t\in[0,T]$,
$Y$ is a $H$-valued 
adapted process defined on a complete filtered probability space
$(\Omega,\mathcal{F},\mathbb{F}\coloneqq\{\mathcal{F}_t\}_{t\in[0,T]},\mathbb{P})$,
 $W$ is a $U$-valued cylindrical Wiener process defined on  $(\Omega,\mathcal{F},\mathbb{F},\mathbb{P})$,
 $b((\omega,s),X)$ is a $H$-valued random variable depending on $\omega\in \Omega$, on the time $s$, and on the path $X$, $\sigma((\omega,s),X)$ is a $L_2(U,H)$-valued random variable depending on $\omega\in\Omega$, on  the time $s$, and on the path $X$, and $A$ is the generator of a $C_0$-semigroup $S$ on $H$.



\medskip

We introduce the following notation:
  \begin{itemize}
\itemsep=-1mm
\item\label{2016-04-13:09} $\paths$ denotes a  closed
 subspace of $\Bb{H}$ (\footnote{We recall that $\Bb{H}$ is endowed with the norm $|\cdot|_\infty$.}) such that
\begin{equation}
    \label{eq:2016-03-07:01}
\hskip-0.4cm      \begin{dcases}
(a)\ \Cb{H}\subset \paths\\
(b)\ \mathbf{x}_{t\wedge \cdot}\in \paths, \ \forall \mathbf{x}\in \paths, \ \forall t\in[0,T]\\
(c)\  \mbox{for all }T\in L(H)\mbox{ and }\mathbf{x}\in \paths, \ \mbox{the map}\ [0,T]\rightarrow H,\ t \mapsto T\mathbf{x}_t,\ \mbox{belongs to }\paths.
\end{dcases}
\end{equation}
Hereafter, unless otherwise specified, $\paths$ will be always considered as a Banach space endowed with the norm $|\cdot|_\infty$.
For example, $\paths$ could be $\Cb{H}$,
the space of c\`adl\`ag functions $[0,T]\rightarrow H$,
or  $\Bb{H}$ itself.

\item $\Omega_T$ denotes the product space $ \Omega\times [0,T]$ and $\mathcal{P}_T$ denotes the product measure $\mathbb{P} \otimes m$ on $(\Omega_T,\mathcal{F}_T \otimes \mathcal{B}_{[0,T]})$, where $m$ is the Lebesgue measure and $\mathcal{B}_{[0,1]}$ is the Borel $\sigma$-algebra on $[0,1]$.

\item
 $\mathcal{L}^0_{\mathcal{P}_T}(\paths)$ denotes the space of 
functions
$X\colon \Omega_T\rightarrow H$
 such that
 \begin{equation}
   \label{eq:2016-03-07:02}
   \begin{dcases}
(a)\ 
\forall \omega\in \Omega,\ \mbox{the map}\
[0,T]\rightarrow H,\
t \mapsto  X_t(\omega),\ \mbox{belongs to }\ \paths\\
(b)\   (\Omega_T,\mathcal{P}_T)\rightarrow \paths,\ (\omega,t)  \mapsto  X_{t\wedge \cdot}(\omega)\mbox{ is measurable.}
\end{dcases}
\end{equation}
Two processes $X,X'\in \mathcal{L}^0_{\mathcal{P}_T}(\paths)$ are equal if and only if \mbox{$\mathbb{P}(|X-X'|_\infty=0)=1$}.
\item
For $p\in [1,\infty)$,
 $\Lpaths$ denotes the space of  
equivalence classes
of
functions
$X\in \mathcal{L}^0_{\mathcal{P}_T}(\paths)$
such that
$\Omega_T\rightarrow \paths,\ (\omega,t)  \mapsto  X_{t\wedge \cdot}(\omega)$ has separable range and
\begin{equation}
  \label{eq:2016-03-07:03}
  |X|_{\Lpaths}\coloneqq  \left( \mathbb{E} \left[ |X|_\infty^p \right]  \right) ^{1/p}<\infty.
\end{equation}
\item For $p,q\in[1,\infty)$ and $\beta\in [0,1)$,
 $\Lambda^{p,q,p}_{\mathcal{P}_T,S,\beta}(L(U,H))$ denotes the space of functions 
$\Phi\colon \Omega_T\rightarrow L(U,H)$ such that
\begin{equation*}
  \begin{dcases}
    \Phi u\colon (\Omega_T,\mathcal{P}_T)\rightarrow H,\ (\omega,t) \mapsto \Phi_t(\omega)u,\mbox{ is measurable,\ }\forall u\in U\\
|\Phi|_{p,q,S,\beta}\coloneqq \left( 
\int_0^T \left( \int_0^t
(t-s)^{-\beta q}
 \left(  \mathbb{E} \left[ |S_{t-s}\Phi_s|_{L_2(U,H)}^p \right]  \right) ^{q/p}ds \right)^{p/q} dt
 \right) ^{1/p}<\infty.
  \end{dcases}
\end{equation*}
The space $\Lambda^{p,q,p}_{\mathcal{P}_T,S,\beta}(L(U,H))$ is normed by
 $|\cdot|_{p,q,S,\beta}$ (see 
 Remark \ref{2016-03-31:02}  below).
\item 
 $\overline \Lambda ^{p,q,p}_{\mathcal{P}_T,S,\beta}(L(U,H))$
denotes
the completion of $\Lambda^{p,q,p}_{\mathcal{P}_T,S,\beta}(L(U,H))$. 
We keep the notation $|\cdot|_{p,q,S,\beta}$ for the extended norm.
\end{itemize}

It can be seen that
 $(\Lpaths,|\cdot|_{\Lpaths})$ is a Banach space ($\mathbb{F}$ is supposed to be complete).

\begin{remark}
  \label{2016-03-31:02}
  To see that $|\cdot|_{p,q,S,\beta}$ is a norm and not just a seminorm, suppose that $|\Phi|_{p,q,S,\beta}=0$. In particular, for $u\in U$,
$$
\int_{[0,T]^2}\mathbf{1}_{(0,T]}(t-s)(t-s)^{-\beta}\mathbb{E} \left[ |S_{t-s}\Phi_s u|_{H} \right] ds  \otimes  dt=0,
$$
which entails,  for $ \mathbb{P} \otimes m$-a.e.\ $(\omega,s)\in\Omega_T$, 
\begin{equation}
  \label{eq:2016-03-31:03}
  S_{t-s}\Phi_s(\omega)u=0\qquad m\mbox{-a.e.\ }t\in(s,T].
\end{equation}
Since $S$ is strongly continuous, \eqref{eq:2016-03-31:03} gives
$$
\Phi_s(\omega)u=0\qquad \mathbb{P} \otimes m\mbox{-a.e.\ }(\omega,s)\in\Omega_T,
$$
which provides $\Phi=0$ $\mathbb{P} \otimes m$-a.e., since $U$ is supposed to be separable and $\Phi_s(\omega)\in L(U,H)$ for all $\omega,s$.
\end{remark}

\begin{remark}\label{2016-04-07:04}
  The space $\overline \Lambda ^{p,q,p}_{\mathcal{P}_T,S,\beta}(L(U,H))$ can be naturally identified with a closed subspace of the space of all those  
measurable functions
$$
\zeta\colon (\Omega_T\times [0,T],\mathcal{P}_T \otimes \mathcal{B}_T)\rightarrow L_2(U,H)
$$
such that
$$
\begin{dcases}
\zeta((\omega,s),t)=0,\   \forall ((\omega,s),t)\in \Omega_T\times [0,T],\ s>t,  \\
  |\zeta|_{p,q,p}\coloneqq 
\left( 
\int_0^T \left( \int_0^t
 \left(  \mathbb{E} \left[ |\zeta((\cdot,s),t)|_{L_2(U,H)}^p \right]  \right) ^{q/p}ds \right)^{p/q} dt
 \right) ^{1/p}<\infty.&
\end{dcases}
$$
Indeed, if we denote by $L^{p,q,p}_{\mathcal{P}_T \otimes \mathcal{B}_T}(L_2(U,H))$ such a space, then $L^{p,q,p}_{\mathcal{P}_T \otimes \mathcal{B}_T}(L_2(U,H))$ endowed with $|\cdot|_{p,q,p}$ is a Banach space and the map
$$
\iota\colon \Lambda^{p,q,p}_{\mathcal{P}_T,S,\beta}(L(U,H))\rightarrow L^{p,q,p}_{\mathcal{P}_T \otimes \mathcal{B}_T}(L_2(U,H))
$$
defined by
$$
\iota(\Phi)(\omega,s,t)\coloneqq
\begin{dcases}
  (t-s)^{-\beta}S_{t-s}\Phi_s(\omega)&\forall ((\omega,s),t)\in \Omega_T\times [0,T],\ s\leq t,\\
0 & \mbox{otherwise}.
\end{dcases}
$$
is an isometry.
\end{remark}

The reason to introduce the space $\overline
\Lambda^{p,q,p}_{\mathcal{P}_T,S,\beta}(L(U,H))$ is related to the
existence of a continuous version of the stochastic convolution and to
the factorization method used to construct such a version.  Let 
$p>\max\{2,1/\beta\} $,
 $t\in[0,T]$, and $\Phi\in
\Lambda^{p,2,p}_{\mathcal{P}_T,S,\beta}(L(U,H))$. 
If we consider
the two stochastic convolutions
\begin{equation}
  \label{eq:2016-03-23:03}
  Y_{t'}\coloneqq \mathbf{1}_{[t,T]}(t')\int_t^{t'}S_{t'-s}\Phi_sdW_s,\quad
Z_{t'}\coloneqq \mathbf{1}_{[t,T]}(t')\int_t^{t'}(t'-s)^{-\beta}S_{t'-s}\Phi_sdW_s,
\end{equation}
then $Y_{t'}$ is well-defined for all $t'\in[0,T]$,
$Z_{t'}$ is well-defined for $m$-a.e.\ $t'\in[0,T]$, and 
$Y_{t'},Z_{t'}$
 belong to $L^p((\Omega,\mathcal{F}_{t'},\mathbb{P}),H)$.
By 
 using the stochastic Fubini theorem and the factorization method  (see \cite{DaPrato2014}),
we can find a predictable process $\widetilde Z$ such that:
\begin{enumerate}[(a)]
\item\label{2016-03-23:04} for $m$-a.e.\ $t\in[0,T]$, $\widetilde Z_t=Z_t$ $\mathbb{P}$-a.e.;
\item for all $t'\in[0,T]$,
 the following  formula holds
  \begin{equation}
    \label{eq:2016-03-23:01}
    Y_{t'}=c_\beta
    \mathbf{1}_{[t,T]}(t')
    \int_t^{t'} (t'-s)^{\beta-1}\widetilde Z_sds\qquad \mathbb{P}\mbox{-a.e.,}
  \end{equation}
where $c_\beta$ is a constant depending only on $\beta$.
\end{enumerate}
\noindent By \eqref{eq:2016-03-23:03}, \eqref{2016-03-23:04}, \cite[Lemma 7.7]{DaPrato1992},
 it follows that $\widetilde Z(\omega)\in L^p((0,T),H)$ for $\mathbb{P}$-a.e.\ $\omega\in\Omega$, hence, by \cite[Lemma 3.2]{Gawarecki2011}, the right-hand side of 
\eqref{eq:2016-03-23:01} is continuous in $t'$.

This classical argument shows that there exists a pathwise continuous process $S\sconv \Phi$ such that, for all $t'\in[0,T]$, $(S\sconv\Phi)_{t'}=Y_{t'}$ $\mathbb{P}$-a.e..
In particular,
$S\sconv \Phi\in \mathcal{L}^0_{\mathcal{P}_T}(\Cb{H})$.
By
\eqref{eq:2016-03-23:03}, \eqref{eq:2016-03-23:01}, 
H\"older's inequality,
 and \cite[Lemma 7.7]{DaPrato1992}, we also have 
\begin{equation}
  \label{eq:2016-04-06:00}
  \mathbb{E} \left[ |S\sconv \Phi|_\infty^p \right] \leq c_\beta^p \left( \int_0^Tv^{\frac{(\beta-1)p}{p-1}}dv\right)^{p-1}
\mathbb{E} \left[ \int_0^T|\widetilde Z_s|_H^pds \right] 
\leq
c'_{\beta,T,p}|\Phi|_{p,2,S,\beta}^p,
\end{equation}
where $c'_{\beta,T,p}$ is a constant depending only on $\beta,T,p$.
This shows that the linear map 
$S\sconv\#$, defined as
\begin{equation}
  \label{eq:2016-03-23:05}
  \Lambda^{p,2,p}_{\mathcal{P}_T,S,\beta}(L(U,H))\rightarrow \mathcal{L}^p_{\mathcal{P}_T}(\Cb{H}),\ \Phi \mapsto S\sconv \Phi,
\end{equation}
is well-defined and continuous.
Then, we can uniquely extend \eqref{eq:2016-03-23:05} to a continuous linear map on $\overline \Lambda^{p,2,p}_{\mathcal{P}_T,S,\beta}(L(U,H))$, that
we can see as $\Lpaths$-valued, since, by assumption, $\Cb{H}\subset \paths$. We end up with a continuous linear map,
again denoted by $S\sconv \#$,
\begin{equation}
  \label{eq:2016-03-23:06}
 S\sconv \#\colon \overline \Lambda^{p,2,p}_{\mathcal{P}_T,S,\beta}(L(U,H))\rightarrow 
 \Lpaths.
\end{equation}
Summarizing,
\begin{enumerate}[(1)]
\item the map $S\sconv\#$ is linear, continuous, $\mathcal{L}^p_{\mathcal{P}_T}(C([0,T],H))$-valued;
\item the operator norm of $S\sconv\#$ depends only on $\beta,T,p$;
\item if $\Phi\in \Lambda^{p,2,p}_{\mathcal{P}_T,S,\beta}(L_2(U,H))$, $S\sconv \Phi$ is a continuous version of the process $Y$ in \eqref{eq:2016-03-23:03}.
\end{enumerate}

Within the approach using the factorization method,
the space $\overline \Lambda^{p,2,p}_{\mathcal{P}_T,S,\beta}(L(U,H))$ 
is then naturally introduced
if we want to see the stochastic convolution as a continuous linear operator acting on a Banach space and providing pathwise continuous processes,
and this perspective is useful when applying
to SDEs
the results based on parametric $\alpha$-contractions obtained in the first part of the paper.

\medskip
We make some observations that will be useful later.
Let $\hat S$ be another $C_0$-semigroup on $H$, and let 
 $\Phi\in  \Lambda^{p,2,p}_{\mathcal{P}_T,S,\beta}(L(U,H))$,
$\hat \Phi  \in\Lambda^{p,2,p}_{\mathcal{P}_T,\hat S,\beta}(L(U,H))$.
Then, by using the
 factorization formula \eqref{eq:2016-03-23:01}
 both with respect to the couples $(S,\Phi)$ and  $(\hat S,\hat \Phi)$, and by an estimate analogous to  \eqref{eq:2016-04-06:00},
we obtain
\begin{equation}
    \label{eq:2016-04-06:01}
    \begin{multlined}[c][.85\displaywidth]
       \mathbb{E} \left[ |S\sconv \Phi- \hat S\sconv \hat \Phi|^p_\infty \right] \leq\\
\leq c'_{\beta,T,p}
\int_0^T
 \left( 
\int_0^t
(t-s)^{-2\beta}
 \left( 
\mathbb{E} \left[ 
|S_{t-s}\Phi_s -\hat S_{t-s}\hat \Phi_s|_{L_2(U,H)}^p
 \right] 
 \right) ^{2/p}
ds
 \right) ^{p/2}
dt.
\end{multlined}
\end{equation}
For $0\leq t_1\leq t_2\leq T$ and $\Phi\in \Lambda^{p,2,p}_{\mathcal{P}_T,S,\beta}(L(U,H))$, we also have
\begin{equation}
  \label{eq:2016-04-20:05}
  (S\sconvt{t_1} \Phi-
S\sconvt{t_2} \Phi)_s
=
\mathbf{1}_{[t_1,t_2]}(s)(S\sconvt{t_1} \Phi)_s
+\mathbf{1}_{(t_2,T]}(s)S_{s-t_2} (S\sconvt{t_1} \Phi)_{t_2}\qquad \forall s\in [0,T].
\end{equation}
\bigskip
Since
$$
\sup_{s\in[t_1,t_2]}
 |(S\sconvt{t_1} \Phi)_s|_H
\leq
 |S\sconvt{t_1} (\mathbf{1}_{[t_1,t_2]}(\cdot)\Phi)|_\infty\qquad \mathbb{P}\mbox{-a.e.},
$$
we obtain,
by \eqref{eq:2016-04-06:00},
\begin{equation}
  \label{2016-04-16:00}
\lim_{t_2-t_1\rightarrow 0^+} 
\mathbb{E} \left[ 
\sup_{s\in[t_1,t_2]}
|(S\sconvt{t_1} \Phi)_s|^p _H \right] 
\leq
\lim_{t_2-t_1\rightarrow 0^+} 
 c'_{\beta,T,p}|\mathbf{1}_{[t_1,t_2]}(\cdot)\Phi|_{p,2,S,\beta}^p=0,
\end{equation}
where the latter limit can be seen by applying Lebesgue's dominated convergence theorem three times, to the three integrals defining $|\cdot|_{p,2,S,\beta}$.
Actually, since the linear map
$$
\overline \Lambda^{p,2,p}_{\mathcal{P}_T,S,\beta}(L(U,H))\rightarrow \overline\Lambda^{p,2,p}_{\mathcal{P}_T,S,\beta}(L(U,H)),\ \Phi\rightarrow \mathbf{1}_{[t_1,t_2]}(\cdot)\Phi
$$
is bounded, uniformly in $t_1,t_2$, the limit
\eqref{2016-04-16:00} is uniform for $\Phi$ in compact subsets of $\overline \Lambda^{p,2,p}_{\mathcal{P}_T,S,\beta}(L(U,H))$ and $t_1,t_2\in[0,T]$, 
 $t_2-t_1\rightarrow 0^+$.
Then, by \eqref{eq:2016-04-20:05} and \eqref{2016-04-16:00}, we finally obtain
\begin{equation}
  \label{eq:2016-04-16:01}
\lim_{|t_2-t_1|\rightarrow 0}|S\sconvt{t_1} \Phi-
S\sconvt{t_2} \Phi|_{\Lpaths}=0
\end{equation}
uniformly for
$\Phi$ in compact subsets of $\overline \Lambda^{p,2,p}_{\mathcal{P}_T,S,\beta}(L(U,H))$.
In particular,
thanks to the
uniform boundedness of
 $\{S\sconv \#\}_{t\in[0,T]}$ (see \eqref{eq:2016-04-06:00}),
the map
 \begin{equation}
   \label{eq:2016-04-16:02}
   [0,T]\times \overline \Lambda^{p,2,p}_{\mathcal{P}_T,S,\beta}(L(U,H))\rightarrow \Lpaths,\ (t,\Phi) \mapsto S\sconv \Phi
 \end{equation}
is continuous.

\subsection{Existence and uniqueness of mild solution}\label{2017-04-28:02}

The following assumption will be standing for the remaining part of this manuscript.
We recall that, if $E$ is a Banach space, then $\mathcal{B}_E$ denotes its Borel $\sigma$-algebra.

\begin{assumption}\label{2016-03-24:08}
${}$
  \begin{enumerate}[(i)]
  \item\label{2016-03-24:09} $b\colon (\Omega_T\times \paths,\mathcal{P}_T
 \otimes \mathcal{B}_\paths)\rightarrow (H,\mathcal{B}_H)$ is measurable; 
\item\label{2016-03-24:12} $\sigma\colon (\Omega_T\times \paths,\mathcal{P}_T
 \otimes \mathcal{B}_\paths)\rightarrow
L(U,H)$ is strongly measurable, that is 
$(\Omega_T\times \paths,\mathcal{P}_T
 \otimes \mathcal{B}_\paths)\rightarrow
H, \ ((\omega,t),\mathbf{x}) \mapsto \sigma((\omega,t),\mathbf{x})u$ is measurable, for all $u\in U$;
\item\label{2016-03-24:10} (non-anticipativity condition)
for all $ ((\omega,t),\mathbf{x})\in \Omega_T \times \paths$,
$ b((\omega,t),\mathbf{x})=b((\omega,t),\mathbf{x}_{t\wedge \cdot})$
and
$  \sigma((\omega,t),\mathbf{x})=\sigma((\omega,t),\mathbf{x}_{t\wedge \cdot})$;
\item\label{2016-03-24:11} there exists $g\in L^1((0,T),\mathbb{R})$ such that
  \begin{equation*}
    \begin{dcases}
      |b((\omega,t),\mathbf{x})|_H\leq g(t)(1+|\mathbf{x}|_\infty)& \forall ((\omega,t),\mathbf{x})\in\Omega_T\times \paths,\\
      |b((\omega,t),\mathbf{x})-
b((\omega,t),\mathbf{x}')|_H\leq g(t)|\mathbf{x}-\mathbf{x}'|_\infty& \forall (\omega,t)\in\Omega_T,\ \forall \mathbf{x},\mathbf{x}'\in \paths;
    \end{dcases}
  \end{equation*}
\item\label{2016-03-24:14} there exist $M>0$, $\gamma\in [0,1/2)$ 
such that
  \begin{equation*}
    \begin{dcases}
      |S_t\sigma((\omega,s),\mathbf{x})|_{L_2(U,H)}\leq M t^{-\gamma}(1+|\mathbf{x}|_\infty)& \forall ((\omega,s),\mathbf{x})\in\Omega_T\times \paths,\ \forall t\in (0,T],\\
      |S_t\sigma((\omega,s),\mathbf{x})-
S_t\sigma((\omega,s),\mathbf{x}')|_{L_2(U,H)}\leq M t^{-\gamma}|\mathbf{x}-\mathbf{x}'|_\infty& \forall (\omega,s)\in\Omega_T,
\ \forall t\in(0,T],
\ \forall\mathbf{x},\mathbf{x}'\in \paths.
    \end{dcases}
  \end{equation*}
\end{enumerate}
\end{assumption}

\medskip
\begin{remark}
Assumption \ref{2016-03-24:08}\emph{(\ref{2016-03-24:11})} could be generalized to the form
\begin{equation*}
  \label{eq:2017-05-11:03}
    \begin{dcases}
      |S_tb((\omega,s),\mathbf{x})|_H\leq t^{-\gamma}g(s)(1+|\mathbf{x}|_\infty)& \forall ((\omega,s),\mathbf{x})\in\Omega_T\times \paths,\ \forall t\in(0,T]\\
      |S_t(b((\omega,s),\mathbf{x})-
b((\omega,s),\mathbf{x}'))|_H\leq t^{-\gamma}g(s)|\mathbf{x}-\mathbf{x}'|_\infty& \forall (\omega,s)\in\Omega_T,
\ \forall t\in(0,T],
\ \forall \mathbf{x},\mathbf{x}'\in \paths, 
    \end{dcases}
\end{equation*}
with $g$ suitably integrable,
and similarly for Assumption \ref{2016-03-24:08}\emph{(\ref{2016-03-24:14})}.
The results obtained and the methods used
hereafter 
 can be adapted to cover these more general assumptions.
\end{remark}

\medskip
\begin{definition}[Mild solution]
Let $Y\in \mathcal{L}^0_{\mathcal{P}_T}(\paths)$ and $t\in [0,T)$.
A function $X\in \mathcal{L}^0_{\mathcal{P}_T}(\paths)$ is 
 a \emph{mild solution} to
\eqref{2016-03-01:00} if,
for all $t'\in [t,T]$,
  \begin{equation*}
    \mathbb{P} \left( \int_t^{t'}|S_{t-s}b(\cdot,s,X)|_Hds
      +
      \int_t^{t'}|S_{t-s}\sigma(\cdot,s,X)|^2_{L_2(U,H)}ds
     <\infty\right)=1,
  \end{equation*}
and
\begin{equation*}
  \begin{dcases}
    \forall t'\in [0,t],&
  X_{t'}=Y_{t'}\ \mathbb{P}\mbox{-a.e.,}\\
\forall t'\in(t,T],&
  X_{t'}=S_{t'-t}Y_{t}
  +\int_t^{t'}S_{t'-s}b((\cdot,s),X)ds
  +\int_t^{t'}S_{t'-s}\sigma((\cdot,s),X)dW_s
\ \mathbb{P}\mbox{-a.e..}
\end{dcases}
\end{equation*}
\end{definition}

\medskip

Using a classical contraction argument, we are going to 
prove existence and uniqueness of mild solution in the space $\Lpaths$,
when the initial datum $Y$ belongs to $ \Lpaths$, 
for $p$ large enough.
This will let us
 apply the theory developed in Section~\ref{sec:recalls-dif-banach}.

\medskip
  For $t\in[0,T]$ and
 $$
p>p^*\coloneqq \frac{2}{1-2\gamma},
\ \beta\in (1/p,1/2-\gamma),
$$
 we define the following maps:
\begin{subequations}
  \begin{equation*}\label{2016-03-24:03}
    \id^S_t\colon \Lpaths\rightarrow \Lpaths,\ Y \mapsto
\mathbf{1}_{[0,t]}(\cdot)
 Y+\mathbf{1}_{(t,T]}(\cdot)S_{\cdot-t}Y_t
  \end{equation*}
  \begin{equation*}
    \label{eq:2016-03-24:04}
    F_b\colon \Lpaths\rightarrow L^{p,1}_{\mathcal{P}_T}(H),\ X \mapsto  b((\cdot,\cdot),X)
  \end{equation*}
  \begin{equation*}
    \label{eq:2016-03-24:05}
    F_\sigma\colon \Lpaths\rightarrow \overline \Lambda^{p,2,p}_{\mathcal{P}_T,S,\beta}(L(U,H)),\ X \mapsto \sigma((\cdot,\cdot),X)
  \end{equation*}
  \begin{equation*}
    \label{eq:2016-03-24:06}
    S\conv \#\colon L^{p,1}_{\mathcal{P}_T}(H)\rightarrow \Lpaths,\ X \mapsto 
\mathbf{1}_{[t,T]}(\cdot)\int_t^\cdot  S_{\cdot-s}X_sds,
\end{equation*}
and we recall the map
\begin{equation*}
  \label{eq:2016-03-24:07}
  S\sconv \#\colon \overline \Lambda^{p,2,p}_{\mathcal{P}_T,S,\beta}(L(U,H))\rightarrow \Lpaths,\ \Phi \mapsto S\sconv \Phi.
\end{equation*}
\end{subequations}
Then $\id^S_t$ is well-defined, due to ($a$) and ($b$) in \eqref{eq:2016-03-07:01}, because we can write
\begin{equation}
  \label{eq:2016-04-21:01}
  \id^S_t  (Y)=Y_{t\wedge \cdot}+\mathbf{1}_{(t,T]}(\cdot)(S_{\cdot-t}-I)Y_t.
\end{equation}
As regarding $F_b$,
by Assumption \ref{2016-03-24:08}\emph{(\ref{2016-03-24:09})},\emph{(\ref{2016-03-24:10})}, and by 
($b$) in
\eqref{eq:2016-03-07:01},
the map
$$
\Omega_T\rightarrow  H,\ (\omega,t) \mapsto  b((\omega,t),X(\omega))
=
b((\omega,t),X_{t\wedge \cdot}(\omega))
$$
is predictable.
Moreover, by 
Assumption \ref{2016-03-24:08}\emph{(\ref{2016-03-24:11})}, we have
\begin{equation*}
 \int_0^T  \left( \mathbb{E} \left[ |b(\cdot,t,X_{t\wedge \cdot})|^p \right]  \right) ^{1/p}dt\leq  
 \int_0^T  g(t)\left( \mathbb{E} \left[ (1+|X|_\infty)^p \right]  \right) ^{1/p}dt\leq  |g|_{L^1((0,T),\mathbb{R})}(1+|X|_{\Lpaths}),
\end{equation*}
which shows that $F_b(X)\in L^{p,1}_{\mathcal{P}_T}(H)$.
By Assumption \ref{2016-03-24:08}\emph{(\ref{2016-03-24:11})}, we also have that $F_b$ is Lipschitz, with Lipschitz constant dominated by $|g|_{L^1((0,1),\mathbb{R})}$.
Similarly as done for $F_b$, 
by using Assumption \ref{2016-03-24:08}\emph{(\ref{2016-03-24:12})},
one can see that, for $X\in \Lpaths$, the map
$$
(\Omega_T,\mathcal{P}_T)\rightarrow L(U,H),\ (\omega,t) \mapsto  \sigma((\omega,t),X_{t\wedge \cdot}(\omega))
$$
is strongly measurable.
Moreover, by
Assumption \ref{2016-03-24:08}\emph{(\ref{2016-03-24:14})}, we have
\begin{equation*}
  \begin{split}
|F_\sigma(X)|_{p,2,S,\beta}&=   \left(   \int_0^T  \left(\int_0^t (t-s)^{-\beta 2}
      \left( \mathbb{E} \left[ |S_{t-s} \sigma((\cdot,s),X_{s\wedge \cdot})|_{L_2(U,H)}^p \right] \right) ^{2/p}ds
       \right) ^{p/2}dt \right) ^{1/p}\\
&\leq M
    \left(  \int_0^T  \left(\int_0^t v^{-(\beta+\gamma) 2}
      dv
       \right) ^{p/2}dt \right) ^{1/p}
(1+|X|_{\Lpaths})
  \end{split}
\end{equation*}
and the latter term is finite because $\beta<1/2-\gamma$ and $X\in \Lpaths$.
Then $F_\sigma$ is well-defined.
With similar computations, we have that $F_\sigma$ is Lipschitz, with Lipschitz constant depending only on $M$, $\beta$, $\gamma$, $p$.
Regarding $S\conv \#$, 
 if $X\in L^{p,1}_{\mathcal{P}_T} (H)$, then $X(\omega)\in L^1((0,T),H)$ for $\mathbb{P}$-a.e.\ $\omega\in \Omega$, hence  it is easily checked that
$$
[0,T]\rightarrow H,\  t' \mapsto \mathbf{1}_{[0,t]}(t')\int_t^{t'}S_{t'-s}X_s(\omega)ds
$$
is continuous, and then it belongs to $\paths$.
Since $\mathbb{F}$ is complete, we can assume that \mbox{$S\conv X(\omega)$}  is continuous for all $\omega$, hence it is predictable, because it is $\mathbb{F}$-adapted.
Since the trajectories are continuous, we also have the measurability of 
$$
(\Omega_T,\mathcal{P}_T)\rightarrow \Cb{H}\subset \paths,\ (\omega,t') \mapsto  (S\conv X)_{t'\wedge \cdot}(\omega).
$$
Then, to show that $S\conv X\in \Lpaths$, it remains to verify the integrability condition.
We have
\begin{equation*}
  |S\conv X|_{\Lpaths}\leq M' \left( \mathbb{E} \left[ \left(  \int_0^T|X_s|_Hds \right) ^p \right] \right) ^{1/p}
\leq M'\int_0^T  \left( \mathbb{E} \left[ |X_s|_H^p \right]  \right) ^{1/p}ds=M' |X|_{p,1},
\end{equation*}
where 
$$
\begin{minipage}{0.8\linewidth}
  \begin{center}
      $M'$ is any upper bound for $ {\displaystyle\sup_{t\in[0,T]}|S_t|_{L(H)}}$.
    \end{center}
  \end{minipage}
$$
The good definition of $S\sconv \#$ was discussed above (observe that $p>\max\{2,1/\beta\}$).

\smallskip
We can then build the map 
\begin{equation}
  \label{eq:2016-03-24:00}
  \psi\colon \Lpaths\times \Lpaths
  \rightarrow \Lpaths,\ (Y,X) \mapsto \id^S_t(Y)+S\conv F_{b}(X)+S\sconv F_{\sigma}(X).
\end{equation}
In what follows, whenever we need to make explicit the dependence of $\psi(Y,X)$ on the data $t,S,b,\sigma$, we  write $\psi(Y,X;t,S,b,\sigma)$.

\smallskip

We first show that, for ech $Y\in \Lpaths$, $\psi(Y,\cdot)$ has a unique fixed point $X$.
Such a fixed point is a mild solution to 
\eqref{2016-03-01:00}.

\smallskip
The advantage of introducing the setting above is that it permits to see $\psi$ as a composition of maps
that have
different 
regularity and that can be considered individually
when studying
the regularity of the mild solution $X^{t,Y}$ with respect to $Y$ or
 the dependence of $X^{t,Y}$  with respect to a perturbation of the data $Y,t,S,b,\sigma$.

\smallskip
For $\lambda>0$, we consider the following norm on $\Lpaths$
$$
|X|_{\Lpaths,\lambda}\coloneqq  \left( \mathbb{E} \left[ \sup_{t\in[0,T]}e^{-\lambda p t}|X_t|^p \right]  \right) ^{1/p}\qquad \forall X\in \Lpaths.
$$
Then $|\cdot|_{\Lpaths, \lambda}$ is equivalent to $|\cdot|_{\Lpaths}$.

\medskip
We proceed to show that there exists $\lambda>0$ 
such that $\psi$ is a  parametric contraction.

\smallskip
For $X,X'\in \Lpaths$, $\lambda>0$, and $t'\in[0,T]$, we have
\begin{equation*}
  \begin{split}
    e^{-\lambda p t'}|
    (S\conv F_b(X))_{t'}-(S\conv F_b(X'))_{t'}|_H^p
    &\leq 
    (M')^p
   \left(   \int_0^{t'}e^{-\lambda t'}|b((\cdot,s),X)-b((\cdot,s),X')|_H ds \right) ^p\\
&\leq (M')^p
 \left( \int_0^{t'} e^{-\lambda (t'-s)}g(s) e^{-\lambda s}|X_{s\wedge \cdot}-X'_{s\wedge \cdot}|_\infty ds \right) ^p\\
&\leq C_{\lambda,g,M'}^p\sup_{s\in[0,T]}\left\{e^{-\lambda ps}|X_s-X'_s|_H^p\right\},
  \end{split}
\end{equation*}
where ${\displaystyle C_{\lambda,g,M'}\coloneqq M'\sup_{t'\in[0,T]}\int_0^{t'}e^{-\lambda v}g(t'-v)dv }$.
We then obtain
\begin{equation}
  \label{eq:2016-03-24:15}
  |S\conv F_b(X)-S\conv F_b(X')|_{\Lpaths,\lambda}\leq C_{\lambda,g,M'} |X-X'|_{\Lpaths, \lambda}.
\end{equation}
It is not difficult to see that
$C_{\lambda,g,M'} \rightarrow 0$ as $\lambda \rightarrow \infty$.

Now, if $\Phi\in \Lambda^{p,2,p}_{\mathcal{P}_T,S,\beta}(L(U,H))$, then $e^{-\lambda \cdot}\Phi\in \Lambda^{p,2,p}_{\mathcal{P}_T,e^{-\lambda \cdot}S,\beta}(L(U,H))$ for all $\lambda\geq 0$ and,
for $\mathbb{P}$-a.e.\ $\omega\in \Omega$, 
\begin{equation}
  \label{eq:2016-03-25:00}
  e^{-\lambda t'}(S\sconv \Phi)_{t'}=((e^{-\lambda \cdot}S)\sconv (e^{-\lambda \cdot} \Phi))_{t'}\qquad \forall t'\in[0,T].
\end{equation}
For  $X\in \Lpaths$, we have
$$
\int_{t}^{t'}\mathbb{E} \left[|e^{-\lambda (t'-s)}S_{t'-s}(e^{-\lambda \cdot}F_\sigma(X))_s|_{L_2(U,H)}^2 \right] ds <\infty\qquad \forall t'\in[t,T].
$$
Then,  for $X,X'\in \Lpaths$, $\lambda\geq 0$, and for all $t'\in[t,T]$, formula
\eqref{eq:2016-03-23:01} provides
\begin{equation*}
  ((e^{-\lambda \cdot}S)\sconv (e^{-\lambda \cdot}F_\sigma(X)))_{t'}-
  ((e^{-\lambda \cdot}S)\sconv (e^{-\lambda \cdot}F_\sigma(X')))_{t'}=c_\beta\int_t^{t'}(t'-s)^{\beta-1}\hat Z_sds \qquad \mathbb{P}\mbox{-a.e.,}
\end{equation*}
where $\hat Z$ is an $H$-valued predictable process such that, for a.e.\ $t'\in [t,T]$,
$$
\hat Z_{t'}=\int_t^{t'}(t'-s)^{-\beta}e^{-\lambda (t'-s)}S_{t'-s}(e^{-\lambda \cdot}F_\sigma(X)-e^{-\lambda \cdot}F_\sigma(X'))_sdW_s\qquad \mathbb{P}\mbox{-a.e..}
$$
By collecting the observations above, we can write, for $\lambda
\geq 0$ and for all $t'\in[t,T]$,
\begin{equation*}
    e^{-\lambda p t'} |(S\sconv F_\sigma(X))_{t'}-(S\sconv F_\sigma(X'))_{t'}|_H^p\leq c_\beta^p  \left( \int_0^Tv^{\frac{(\beta-1)p}{p-1}}dv \right) ^{p-1}\int_{t}^T |\hat Z_s|_H^pds,
\end{equation*}
then, by applying 
\cite[Lemma 7.7]{DaPrato1992},
\begin{equation*}
    |S\sconv F_\sigma(X)-S\sconv F_\sigma(X')|^p_{\Lpaths,\lambda}
\leq 
c'_{\beta,T,p} 
|e^{-\lambda \cdot}F_\sigma(X)-e^{-\lambda \cdot}F_\sigma(X')|^p_{p,2,e^{-\lambda \cdot}S,\beta}
\end{equation*}
where $c'_{\beta,T,p}$ is a constant depending only on $\beta,T,p$.
Now, by using 
Assumption \ref{2016-03-24:08}\emph{(\ref{2016-03-24:14})}, we have
\begin{equation*}
|e^{-\lambda \cdot}F_\sigma(X)-e^{-\lambda \cdot}F_\sigma(X')|^p_{p,2,e^{-\lambda \cdot}S,\beta}
\leq
M^p
 \left( \int_0^T
 \left( \int_0^t
   v^{-(\beta+\gamma)2}e^{-\lambda v}
   dv
    \right) ^{p/2}dt \right) |X-X'|^p_{\Lpaths,\lambda}.
\end{equation*}
We finally obtain
\begin{equation}
  \label{eq:2016-03-25:01}
      |S\sconv F_\sigma(X)-S\sconv F_\sigma(X')|_{\Lpaths,\lambda}\leq
c''_{\beta,\gamma,T,p,M,\lambda}|X-X'|_{\Lpaths,\lambda},
\end{equation}
where $c''_{\beta,\gamma,T,p,M,\lambda}$ is a constant depending only on $\beta,\gamma,T,p,M,\lambda$, and is such that 
$$
\lim_{\lambda\rightarrow \infty}
c''_{\beta, \gamma,T,p,M,\lambda}=0.
$$

\medskip
\noindent By \eqref{eq:2016-03-24:15} and
\eqref{eq:2016-03-25:01}, we have, for all $Y,X,Y',X'$,
\begin{equation}
  \label{eq:2016-03-25:02}
  \begin{multlined}[c][.85\displaywidth]
      |\psi(Y,X)
  -
  \psi(Y',X')|_{\Lpaths,\lambda}\leq\\
  \leq M'|Y-Y'|_{\Lpaths,\lambda}+
  C'_{\lambda,g,\gamma,M',\beta,T,p,M}|X-X'|_{\Lpaths,\lambda},
\end{multlined}
\end{equation}
where $C'_{\lambda,g,\gamma,M',\beta,T,p,M}$ is a constant depending only on $\lambda,g,\gamma,M',\beta,T,p,M$, 
such that
\begin{equation}
  \label{eq:2016-03-25:03}
  \lim_{\lambda\rightarrow \infty}C'_{\lambda,g,\gamma,M',\beta,T,p,M}=0.
\end{equation}
\vskip5pt

\begin{theorem}\label{2016-04-13:00}
Let Assumption~\ref{2016-03-24:08} hold and  let $t\in[0,T]$, $p>p^*$.
Then there exists a unique mild solution $X^{t,Y}\in \Lpaths$ to SDE \eqref{2016-03-01:00}. 
Moreover, there exists
a constant $C$,
depending only on $g,\gamma,M,M',T,p$,
such that,
$$
|X^{t,Y}-X^{t,Y'}|_{\Lpaths}\leq 
C
|Y-Y'|_{\Lpaths}
\qquad\forall Y,Y'\in \Lpaths.
$$
\end{theorem}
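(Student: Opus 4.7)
The plan is to observe that essentially all the hard analytic work has already been carried out in the preceding discussion, and it only remains to invoke the Banach contraction principle in the right normed space, combined with Lemma~\ref{2016-02-25:02}, to package the conclusion.

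First I would choose $\lambda > 0$ large enough, using the limit \eqref{eq:2016-03-25:03}, so that the constant $\alpha \coloneqq C'_{\lambda,g,\gamma,M',\beta,T,p,M}$ in \eqref{eq:2016-03-25:02} is strictly less than $1$. Endow $\Lpaths$ with the equivalent norm $|\cdot|_{\Lpaths,\lambda}$. Then \eqref{eq:2016-03-25:02} shows that, for each fixed $Y \in \Lpaths$, the map $X \mapsto \psi(Y,X)$ is an $\alpha$-contraction on $\Lpaths$, and moreover that the two-variable map $\psi \colon \Lpaths \times \Lpaths \to \Lpaths$ is a parametric $\alpha$-contraction in the sense of Section~\ref{2017-04-28:01}.

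Next I would apply the Banach contraction principle to obtain, for each $Y$, a unique fixed point $X^{t,Y}\in \Lpaths$ with $\psi(Y, X^{t,Y}) = X^{t,Y}$. Unwinding the definitions of $\id^S_t$, $S\conv\#$, and $S\sconv\#$ shows that $X^{t,Y}_{t'} = Y_{t'}$ $\mathbb{P}$-a.s.\ for $t' \in [0,t]$, and for $t' \in (t,T]$
\begin{equation*}
X^{t,Y}_{t'} = S_{t'-t}Y_t + \int_t^{t'} S_{t'-s}b((\cdot,s),X^{t,Y})\,ds + \int_t^{t'} S_{t'-s}\sigma((\cdot,s),X^{t,Y})\,dW_s \quad \mathbb{P}\mbox{-a.s.},
\end{equation*}
where the stochastic integral is interpreted via the continuous version $S\sconv F_\sigma(X^{t,Y})$. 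Thus $X^{t,Y}$ is a mild solution to \eqref{2016-03-01:00}, and any mild solution $\widetilde X \in \Lpaths$ would, via the same identifications, satisfy $\widetilde X = \psi(Y, \widetilde X)$ and therefore coincide with $X^{t,Y}$ by uniqueness of the fixed point. A small point to verify here is the measurability and integrability properties needed to identify the mild-solution integrals with $S\conv F_b$ and $S\sconv F_\sigma$; these follow from the well-definedness of $F_b$, $F_\sigma$, $S\conv\#$, $S\sconv\#$ already established in the preamble to \eqref{eq:2016-03-24:00}.

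Finally, for the Lipschitz dependence on $Y$, I would apply Lemma~\ref{2016-02-25:02}(\ref{2016-02-23:03}) with the concave modulus $w(r) = M' r$ (which is available because \eqref{eq:2016-03-25:02} gives $|\psi(Y,X) - \psi(Y',X)|_{\Lpaths,\lambda} \leq M'|Y-Y'|_{\Lpaths,\lambda}$ for every $X$): this yields
\begin{equation*}
|X^{t,Y} - X^{t,Y'}|_{\Lpaths,\lambda} \leq \frac{M'}{1-\alpha} |Y-Y'|_{\Lpaths,\lambda},
\end{equation*}
and converting back from $|\cdot|_{\Lpaths,\lambda}$ to $|\cdot|_{\Lpaths}$ via the (universal) equivalence constants absorbs everything into a single constant $C$ depending only on $g,\gamma,M,M',T,p$. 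No step is really an obstacle; the only mildly delicate point is keeping track of which norm is used where, and ensuring that the constant $C$ can be taken independent of $t\in[0,T]$, which is automatic from \eqref{eq:2016-03-25:02}--\eqref{eq:2016-03-25:03} since none of the constants there depend on $t$.
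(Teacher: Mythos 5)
Your proposal is correct and follows essentially the same route as the paper: fix $\beta\in(1/p,1/2-\gamma)$, take $\lambda$ large so that \eqref{eq:2016-03-25:02}--\eqref{eq:2016-03-25:03} make $\psi$ a parametric contraction in the $|\cdot|_{\Lpaths,\lambda}$ norm, apply Lemma~\ref{2016-02-25:02}, and use the norm equivalence. The paper's proof is just a terser version of exactly this argument.
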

\begin{proof}
Let us fix any $\beta\in (1/p,1/2-\gamma)$ and let $\psi$ be defined by \eqref{eq:2016-03-24:00}.
It is clear that any fixed point of  $\psi(Y,\cdot)$ is a mild solution to SDE \eqref{2016-03-01:00}.
Then, it is sufficient to apply Lemma~\ref{2016-02-25:02} to $\psi$, taking into account \eqref{eq:2016-03-25:02} and \eqref{eq:2016-03-25:03}, and recalling the equivalence of the norms $|\cdot|_{\Lpaths},|\cdot|_{\Lpaths,\lambda}$.
\end{proof}

\begin{Remark}
Since, for $p^*<p<q$, we have $ \mathcal{L}_{\mathcal{P}_T}^q(\mathbb{S})\subset \mathcal{L}_{\mathcal{P}_T}^p(\mathbb{S})$, then, if $Z\in \mathcal{L}_{\mathcal{P}_T}^q(\mathbb{S})$, the associated mild solution $X^{t,Z}\in
\mathcal{L}_{\mathcal{P}_T}^q(\mathbb{S})$ is also a
mild solution in 
$\mathcal{L}_{\mathcal{P}_T}^p(\mathbb{S})$ and, by uniqueness, it is \emph{the} solution in that space. Hence the solution does not depend on the specific $p>p^*$ chosen.
\end{Remark}

\subsection{G\^ateaux differentiability with respect to the initial datum}\label{2016-04-20:02}

We now study the differentiability of the mild solution $X^{t,Y}$ with respect to the initial datum $Y$.

\begin{assumption}\label{2016-04-05:02}
Let $b,\sigma,g,\gamma$ be as in 
Assumption~\ref{2016-03-24:08}.
 Let $n\in \mathbb{N}$, $n\geq 1$.
 \begin{enumerate}[(i)]
 \item \label{2016-04-21:00}
For all $(\omega,t)\in \Omega_T$ and $u\in U$, $b((\omega,t),\cdot)\in \Gatot{\paths}{H}{n}$, $\sigma((\omega,t),\cdot)u\in \Gatot{\paths}{H}{n}$.
\item There exists $M''$ and
$c\coloneqq  \{c_m\}_{m\in \mathcal{M}}\in \ell^2(\mathcal{M})$ such that
  \begin{equation}
    \label{eq:2016-03-30:01}
  \sup_{j=1,\ldots,n}
    \sup_{\substack{
        \omega\in \Omega\\
        \mathbf{x},\mathbf{y}_1,\ldots,\mathbf{y}_j\in \paths\\
           |\mathbf{y}_1|_\infty=\ldots =|\mathbf{y}_j|_\infty=1
      }
    }
       | \partial ^j_{\mathbf{y}_1\ldots \mathbf{y}_j}b((\omega,s),\mathbf{x})|_H
      \leq M'' g(s),
  \end{equation}
  \begin{equation}
    \label{eq:2016-03-30:02}
    \sup_{j=1,\ldots,n}
    \sup_{
      \substack{
        \omega\in\Omega\\
        \mathbf{x},\mathbf{y}_1,\ldots,\mathbf{y}_j\in \paths\\
        |\mathbf{y}_1|_\infty=\ldots =|\mathbf{y}_j|_\infty=1
      }
      }
   |S_t \partial^j_{\mathbf{y}_1\ldots \mathbf{y}_j} (\sigma((\omega,s),\mathbf{x})e'_m))|_H
     \leq M'' t^{-\gamma}c_m,
   \end{equation}
for all $s\in[0,T]$, $t\in(0,T]$, $ m\in \mathcal{M}$.
\end{enumerate}
\end{assumption}
In accordance with Assumption~\ref{2016-04-05:02}\emph{(\ref{2016-04-21:00})},
by writing $ \partial ^j_{\mathbf{y_1}\ldots \mathbf{y_j}}(\sigma((\omega,s),\mathbf{x})u)$, we mean the G\^ateaux derivative of the map $\mathbf{x} \mapsto \sigma((\omega,s),\mathbf{x}).u$, for fixed $u\in U$.

\begin{lemma}\label{2016-04-05:04}
Suppose that Assumption~\ref{2016-03-24:08} and Assumption~\ref{2016-04-05:02} are satisfied.
Let $p>p^*$, $\beta\in (1/p,1/2-\gamma)$.
Then, for $j=1,\ldots,n$, 
$$
F_b\in \Gat{\Lpaths}{L^{p,1}_{\mathcal{P}_T}(H)}{\mathcal{L}_{\mathcal{P}_T}^{jp}(\paths)}{j},
$$ 
$$
F_\sigma\in \Gat{\Lpaths}{
\overline \Lambda^{p,2,p}_{\mathcal{P}_T,S,\beta}(L(U,H))
}{\mathcal{L}_{\mathcal{P}_T}^{jp}(\paths)}{j}.
$$
and, for
 $X\in \Lpaths$, $Y_1,\ldots, Y_j\in \mathcal{L}^{jp}_{\mathcal{P}_T}(\paths)$, $u\in U$, $\mathbb{P} \otimes m$-a.e.\ $(\omega,t)\in  \Omega_T$,
\begin{equation}\label{2016-04-05:00}
 \begin{dcases}
   \partial ^j_{Y_1\ldots Y_j}F_b(X)(\omega,t)= \partial^j _{Y_1(\omega)\ldots Y_j(\omega)}b((\omega,t),X(\omega))\\ 
   \partial ^j_{Y_1\ldots Y_j}F_\sigma(X)(\omega,t)u= \partial^j _{Y_1(\omega)\ldots Y_j(\omega)}(\sigma((\omega,t),X(\omega))u). 
 \end{dcases}
\end{equation}
Moreover, 
\begin{equation*}
  \sup_{j=1,\ldots,n}
  \sup_{
    \substack{
      X\in \Lpaths\\
      Y_1,\ldots,Y_j\in \mathcal{L}^{jp}_{\mathcal{P}_T}(\paths)\\
      |Y_1|_{\mathcal{L}^{jp}_{\mathcal{P}_T}(\paths)}
      =
      \ldots
      =
      |Y_j|_{\mathcal{L}^{jp}_{\mathcal{P}_T}(\paths)}=1
    }
  }
 \left( 
   | \partial ^j_{Y_1\ldots Y_j}F_b(X)|_{L^{p,1}_{\mathcal{P}_T}(H)}
   +
   | \partial ^j_{Y_1\ldots Y_j}F_\sigma(X)|_{p,2,S,\beta}
 \right) 
\leq M''',
\end{equation*}
where $M'''$ depends only on
$T,p,\beta,\gamma,|g|_{L^1((0,T),\mathbb{R})},M'',|c|_{\ell^2(\mathcal{M})}$.
\end{lemma}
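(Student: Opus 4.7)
The plan is to apply Proposition~\ref{prop:2014-07-30:01} with $X_0=\mathcal{L}^{jp}_{\mathcal{P}_T}(\paths)$, $X=\Lpaths$, and $Y$ equal to either $L^{p,1}_{\mathcal{P}_T}(H)$ or $\overline{\Lambda}^{p,2,p}_{\mathcal{P}_T,S,\beta}(L(U,H))$. Concretely, I will verify by induction on $j=1,\ldots,n$ that the directional derivatives $\partial^j_{Y_1\ldots Y_j}F_b(X)$ and $\partial^j_{Y_1\ldots Y_j}F_\sigma(X)$ exist in the respective target norms, are represented by the pointwise formulas \eqref{2016-04-05:00}, and depend separately continuously on $(X,Y_1,\ldots,Y_j)$; together with the continuity of $F_b$ and $F_\sigma$ already established, Proposition~\ref{prop:2014-07-30:01} then yields the claim.

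First I would extract pointwise estimates on the candidates. By Assumption~\ref{2016-04-05:02}\emph{(ii)} and $j$-linearity, for every $(\omega,t)\in\Omega_T$,
\[
|\partial^j_{Y_1(\omega)\ldots Y_j(\omega)}b((\omega,t),X(\omega))|_H\leq M''g(t)\prod_{i=1}^j|Y_i(\omega)|_\infty,
\]
and, expanding the Hilbert--Schmidt norm along $\{e'_m\}_{m\in\mathcal{M}}$ and summing with $c\in\ell^2(\mathcal{M})$,
\[
|S_{t-s}\partial^j_{Y_1(\omega)\ldots Y_j(\omega)}\sigma((\omega,s),X(\omega))|_{L_2(U,H)}\leq M''|c|_{\ell^2(\mathcal{M})}(t-s)^{-\gamma}\prod_{i=1}^j|Y_i(\omega)|_\infty.
\]
Applying H\"older's inequality with conjugate exponents $(j,\ldots,j)$ to the product $\prod_i|Y_i(\omega)|_\infty^p$, then integrating against the $L^{p,1}_{\mathcal{P}_T}(H)$ norm, resp.\ against the triple integral defining $|\cdot|_{p,2,S,\beta}$, yields
\[
|\partial^j_{Y_1\ldots Y_j}F_b(X)|_{L^{p,1}_{\mathcal{P}_T}(H)}\leq M''|g|_{L^1((0,T),\mathbb{R})}\prod_{i=1}^j|Y_i|_{\mathcal{L}^{jp}_{\mathcal{P}_T}(\paths)}
\]
and a corresponding bound for $F_\sigma$, the time singularity $\int_0^t(t-s)^{-2(\beta+\gamma)}ds$ being integrable because $\beta+\gamma<1/2$. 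Adding the two contributions delivers the constant $M'''$ depending only on the listed quantities.

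To show that the formula \eqref{2016-04-05:00} indeed represents the $j$-th directional derivative in the target norm, at the inductive step I would write, using Proposition~\ref{prop:2014-07-30:01} applied to $b((\omega,t),\cdot)\in\Gatot{\paths}{H}{n}$ pointwise in $(\omega,t)$,
\[
\tfrac{1}{h}\bigl[\partial^{j-1}_{Y_1\ldots Y_{j-1}}F_b(X+hY_j)-\partial^{j-1}_{Y_1\ldots Y_{j-1}}F_b(X)\bigr](\omega,t)=\int_0^1\partial^j_{Y_1(\omega)\ldots Y_j(\omega)}b((\omega,t),X(\omega)+\theta hY_j(\omega))\,d\theta,
\]
and analogously for $F_\sigma$. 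The integrand converges pointwise to $\partial^j_{Y_1(\omega)\ldots Y_j(\omega)}b((\omega,t),X(\omega))$ by Assumption~\ref{2016-04-05:02}\emph{(i)} and the continuity provided by Proposition~\ref{prop:2014-07-30:01}, and is dominated uniformly in $(h,\theta)$ by the pointwise bound derived above. Lebesgue's dominated convergence theorem then upgrades this to convergence in $L^{p,1}_{\mathcal{P}_T}(H)$ (resp.\ $\overline{\Lambda}^{p,2,p}_{\mathcal{P}_T,S,\beta}(L(U,H))$). Separate continuity of $(X,Y_1,\ldots,Y_j)\mapsto\partial^j_{Y_1\ldots Y_j}F_b(X)$, resp.\ of $\partial^j_{Y_1\ldots Y_j}F_\sigma(X)$, follows from the same dominated convergence argument, together with $j$-linearity in $(Y_1,\ldots,Y_j)$.

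The main obstacle is the $F_\sigma$ case: verifying the convergence in $|\cdot|_{p,2,S,\beta}$ requires dominated convergence applied successively to three nested integrals involving the Hilbert--Schmidt norm. The $\ell^2$-summability of $c$ is essential to dominate the Hilbert--Schmidt norm by a function of $\omega$ independent of the basis, and the strict inequality $\beta+\gamma<1/2$ is exactly what guarantees integrability of the resulting time singularity. Once these ingredients are in place, the argument for $F_\sigma$ proceeds in parallel with that for $F_b$.
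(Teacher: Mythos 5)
Your proposal is correct and follows essentially the same route as the paper: induction on the order $j$, representation of the difference quotient of the $(j-1)$-th derivative as $\int_0^1\partial^j_{Y_1(\omega)\ldots Y_j(\omega)}b((\omega,t),X(\omega)+\theta h Y_j(\omega))\,d\theta$ via the pointwise G\^ateaux regularity of $b$ and $\sigma$, domination through the bounds of Assumption~\ref{2016-04-05:02} combined with the generalized H\"older inequality for $\prod_i|Y_i(\omega)|_\infty$, and repeated dominated convergence (three nested integrals for $F_\sigma$), with Proposition~\ref{prop:2014-07-30:01} converting separate continuity of the directional derivatives into membership in the $\mathcal{G}^j$ spaces. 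The only ingredient you leave implicit that the paper states explicitly is the measurability of the candidate derivative maps $(\omega,t)\mapsto\partial^j_{Y_1(\omega)\ldots Y_j(\omega)}b((\omega,t),X(\omega))$, a minor point.
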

\begin{proof}
  We prove the lemma by induction on $n$.

\smallskip
 \underline{\emph{Case $n=1$.}}
Let $X,Y\in \Lpaths$.
First notice that the function
$$
(\Omega_T,\mathcal{P}_T)\rightarrow H,\ (\omega,t) \mapsto  \partial _{Y(\omega)}b((\omega,t),X(\omega))
$$
is measurable.
Let $\epsilon\in \mathbb{R}\setminus \{0\}$.
  Since $b((\omega,t),\cdot)\in \Gatot{\paths}{H}{1}$ for all $(\omega,t)\in \Omega_T$, we can write
  \begin{equation}\label{2016-03-30:03}
    \begin{split}
      \Delta_{\epsilon Y} F_b(X) (\omega,t)&\coloneqq \epsilon^{-1}
 \left( 
F_b(X+\epsilon Y)(\omega,t)
-F_b(X)(\omega,t)
 \right) \\
&=\epsilon^{-1} \left( 
b((\omega,t),X(\omega)+\epsilon Y(\omega))
-
b((\omega,t),X(\omega))
 \right) \\
&=\int_0^1  \partial _{Y(\omega)}b((\omega,t),X(\omega)+\epsilon \theta Y(\omega))d\theta\qquad \mathbb{P} \otimes m\mbox{-a.e.\ } (\omega,t)\in \Omega_T.
\end{split}
\end{equation}
By
 \eqref{eq:2016-03-30:01}, we also have
\begin{equation}\label{2016-03-30:04}
  \begin{split}
    |\partial _{Y(\omega)}b((\omega,t),X(\omega)+\epsilon Y(\omega))|_H\leq M''g(t) |Y(\omega)|_\infty\qquad \forall (\omega,t)\in\Omega_T, \ \forall \epsilon\in \mathbb{R}.
  \end{split}
\end{equation}
By 
\eqref{2016-03-30:03}
and 
\eqref{2016-03-30:04},
we can 
 apply Lebesgue's dominated convergence theorem and obtain
\begin{equation*}
  \begin{split}
\lim_{\epsilon\rightarrow 0}
\int_0^T 
 \left( 
\mathbb{E}
 \left[     
|\Delta_{\epsilon Y}F_b(X)(\cdot,t)- \partial _Yb((\cdot,t),X)|_H^p
 \right] 
 \right) ^{1/p}
 dt=0.
  \end{split}
\end{equation*}
This proves that $F_b$ has directional derivative at $X$ for the increment $Y$ and that
\begin{equation}
  \label{eq:2016-03-30:05}
   \partial _YF_b(X) (\omega,t)= \partial _{Y(\omega)}b((\omega,t),X(\omega))\qquad \mathbb{P} \otimes m\mbox{-a.e.\ }(\omega,t)\in\Omega_T.
 \end{equation}
 We now show that $ \partial _YF_b(X)$ is continuous in $(X,Y)\in \Lpaths$.
Notice that, by \eqref{eq:2016-03-30:01}, 
the linear map
$\Lpaths\rightarrow L^{p,1}_{\mathcal{P}_T}(H),\ Y \mapsto   \partial _YF_b(X)$,
 is bounded, uniformly in $X$.
Then it is sufficient to verify the continuity of $ \partial _YF_b(X)$ in $X$, for fixed $Y$.
Let $X_k\rightarrow X$ in $\Lpaths$.
By \eqref{eq:2016-03-30:01}, \eqref{eq:2016-03-30:05}, and  Lebesgue's dominated convergence theorem, we have
\begin{equation*}
  \lim_{k\rightarrow \infty} \partial _YF_b(X_k)= \partial _YF_b(X)\ \mbox{in \ }L^{p,1}_{\mathcal{P}_T}(H).
\end{equation*}
This concludes the proof that $F_b\in \Gatot{\Lpaths}{L^{p,1}_{\mathcal{P}_T(H)}}{1}$ and that the differential is uniformly bounded.

Similarly, as regarding $F_\sigma$, we have that, for all $u\in U$, the function
$$
(\Omega_T,\mathcal{P}_T)\rightarrow H,\ (\omega,t) \mapsto  \partial _{Y(\omega)}(\sigma(t,X(\omega))u)
$$
is measurable, and 
  \begin{equation}\label{2016-03-31:00}
    \begin{split}
      \Delta_{\epsilon Y} (F_\sigma(X)u) &(\omega,t)\coloneqq \epsilon^{-1}
 \left( 
(F_\sigma(X+\epsilon Y)u)(\omega,t)
-(F_\sigma(X)u)(\omega,t)
 \right) \\
&=\epsilon^{-1} \left( 
\sigma(
(\omega,t),X(\omega)+\epsilon Y(\omega))u
-
\sigma((\omega,t),X(\omega))u
 \right) \\
&=\int_0^1  \partial _{Y(\omega)}(\sigma((\omega,t),X(\omega)+\epsilon \theta Y(\omega))u)d\theta\qquad \mathbb{P} \otimes m\mbox{-a.e.\ } (\omega,t)\in \Omega_T.
\end{split}
\end{equation}
By \eqref{eq:2016-03-30:02}, for all $0\leq s<t\leq T$, $\omega\in\Omega$, $\epsilon\in \mathbb{R}$, $m\in \mathcal{M}$,
\begin{equation}
  \label{eq:2016-03-31:01}
|S_{t-s} \partial _{Y(\omega)}(\sigma((\omega,s),X(\omega)+\epsilon Y(\omega))e'_m)|_H\leq M''(t-s)^{-\gamma}c_m| Y(\omega)|_\infty.
\end{equation}
By repeatedly applying Lebesgue's dominated convergence theorem, we have that
\begin{equation*}
\hskip-0.1cm
    \int_0^T
    \left(
      \int_0^t
      (t-s)^{-2 \beta }
      \left( 
        \mathbb{E}
        \left[ 
          \left( 
            \sum_{m\in \mathcal{M}}
            \left|S_{t-s}
            \left( 
              \Delta_{\epsilon Y} F_\sigma(X)(\cdot,s).e'_m
              - \partial _Y(\sigma((\cdot,s),X).e'_m)
            \right)
            \right|^2_H
          \right) ^{p/2}
        \right] 
      \right) ^{2/p}
      ds
    \right) ^{p/2}
    dt
\end{equation*}
goes to $0$ as $\epsilon\rightarrow 0$.
This proves that $F_\sigma$ has directional derivative at $X$ for the increment $Y$ and, taking into account the separability of $U$, that
\begin{equation}
  \label{eq:2016-03-30:05b}
   \partial _YF_\sigma(X) (\omega,t)= \partial _{Y(\omega)}(\sigma((\omega,t),X(\omega))\#)\qquad \mathbb{P} \otimes m\mbox{-a.e.\ }(\omega,t)\in\Omega_T.
 \end{equation}
By  \eqref{eq:2016-03-31:01} and arguing similarly as done for  $ \partial _YF_b(X)$, in order to show the continuity of $ \partial _YF_\sigma(X)$ in  $(X,Y)\in \Lpaths$, it is sufficient
to verify the continuity of $ \partial _YF_\sigma(X)$ in $X$, for fixed $Y$.
Let $X_k\rightarrow X$ in $\Lpaths$.
By
\eqref{eq:2016-03-30:02},
 \eqref{eq:2016-03-30:05b}, and Lebesgue's dominated convergence theorem, we have
$$
\lim_{k\rightarrow \infty} \partial _YF_\sigma(X_k)= \partial _YF_\sigma(X)\ \mbox{in \ }\overline \Lambda^{p,2,p}_{\mathcal{P}_T,S,\beta}(L(U,H)).
$$
This shosws that $F_\sigma\in \Gatot{\Lpaths}{\overline \Lambda^{p,2,p}_{\mathcal{P}_T,S,\beta}(L(U,H))}{1}$ and that the differential is uniformly bounded.

\smallskip
 \underline{\emph{Case $n>1$.}}
Let $X\in \Lpaths$ and $Y_1,\ldots, Y_n\in \mathcal{L}^{np}_{\mathcal{P}_T}(\paths)$.
By inductive hypothesis, 
we can assume that $ \partial^{n-1} _{Y_1\ldots Y_{n-1}}F_b(X)\in L^{p,1}_{\mathcal{P}_T}(H)$ exists, 
jointly continuous in $X\in \Lpaths$ and $Y_1,\ldots, Y_{n-1}\in \mathcal{L}^{(n-1)p}_{\mathcal{P}_T}(H)$, 
and that
$$
 \partial ^{n-1}_{Y_1\ldots Y_{n-1}}F_b(X)(\omega,t)= \partial ^{n-1}_{Y_1(\omega)\ldots Y_{n-1}(\omega)}b((\omega,t),X(\omega))\qquad \mathbb{P} \otimes m\mbox{-a.e.\ }(\omega,t)\in \Omega_T.
$$
The argument goes like the case $n=1$.
Let $\epsilon\in \mathbb{R}\setminus \{0\}$.
  Since $b((\omega,t),\cdot)\in \Gatot{\paths}{H}{n}$ for $(\omega,t)\in \Omega_T$, we can write,
for $ \mathbb{P} \otimes m\mbox{-a.e.\ } (\omega,t)\in \Omega_T$,
  \begin{equation*}
    \begin{split}
      \Delta_{\epsilon Y_n} & \partial ^{n-1}_{Y_1\ldots Y_{n-1}} F_b(X) (\omega,t)\coloneqq \epsilon^{-1}
 \left( 
\partial ^{n-1}_{Y_1\ldots Y_{n-1}}F_b(X+\epsilon Y_n)(\omega,t)
-\partial ^{n-1}_{Y_1\ldots Y_{n-1}}F_b(X)(\omega,t)
 \right) \\
&=\epsilon^{-1} \left( 
 \partial ^{n-1}_{Y_1(\omega)\ldots Y_{n-1}(\omega)}b((\omega,t),X(\omega)+\epsilon Y_n(\omega))
-
 \partial ^{n-1}_{Y_1(\omega)\ldots Y_{n-1}(\omega)}b((\omega,t),X(\omega))
 \right) \\
&=\int_0^1  \partial ^n_{Y_1(\omega)\ldots Y_{n-1}(\omega)Y_n(\omega)}b((\omega,t),X(\omega)+\epsilon \theta Y_n(\omega))d\theta.
\end{split}
\end{equation*}
By
 \eqref{eq:2016-03-30:01} we have
\begin{equation*}
  \begin{split}
    |\partial^n _{Y_1(\omega)\ldots Y_n(\omega)}b((\omega,t),X(\omega)+\epsilon Y_n(\omega))|_H\leq M''g(t)\prod_{j=1}^n |Y_j(\omega)|_\infty\qquad \forall (\omega,t)\in\Omega_T, \ \forall \epsilon\in \mathbb{R}.
  \end{split}
\end{equation*}
Since $Y_j\in \mathcal{L}^{np}_{\mathcal{P}_T}(H)$, 
by the generalized H\"older inequality
 $\prod_{j=1}^n |Y_j|_\infty\in L^p((\Omega,\mathcal{F}_T,\mathbb{P}),\mathbb{R})$.
Then we can  apply Lebesgue's dominated convergence theorem twice to obtain
\begin{equation*}
  \begin{split}
\lim_{\epsilon\rightarrow 0}
\int_0^T 
 \left( 
\mathbb{E}
 \left[     
|\Delta_{\epsilon Y_n} \partial ^{n-1}_{Y_1\ldots Y_{n-1}}F_b(X)(\cdot,t)- \partial^n _{Y_1\ldots Y_n}b((\cdot,t),X)|_H^p
 \right] 
 \right) ^{1/p}
 dt=0.
  \end{split}
\end{equation*}
This proves that $ \partial ^{n-1}_{Y_1\ldots Y_{n-1}}F_b$ has directional derivative at $X$ for the increment $Y_n$ and that
\begin{equation}\label{2017-05-22:00}
    \partial^n _{Y_1\ldots Y_{n-1}Y_n}F_b(X) (\omega,t)= \partial^n _{Y_1(\omega)\ldots Y_n(\omega)}b((\omega,t),X(\omega))\qquad \mathbb{P} \otimes m\mbox{-a.e.\ }(\omega,t)\in\Omega_T.
 \end{equation}
The continuity of $\partial^{n} _{Y_1\ldots Y_{n-1}Y_n}F_b(X)$ in $X\in \Lpaths$, $Y_1,\ldots,Y_n\in \mathcal{L}^{np}_{\mathcal{P}_T}(H)$, is proved similarly as for the case $n=1$, again by invoking the generalized H\"older inequality.
%
This concludes the proof that $F_b\in \Gat{\Lpaths}{L^{p,1}_{\mathcal{P}_T(H)}}{\mathcal{L}^{np}_{\mathcal{P}_T}(H)}{n}$.
The uniform boundedness of the differentials is
 obtained by
\eqref{eq:2016-03-30:01}, 
\eqref{2017-05-22:00},
and the generalized H\"older inequality.

Finally, as regarding $F_\sigma$,
let again
$X\in \Lpaths$ and $Y_1,\ldots,Y_n\in\mathcal{L}^{np}_{\mathcal{P}_T}(\paths)$.
By inductive hypothesis, we can assume that
 $ \partial ^{n-1}_{Y_1\ldots Y_{n-1}}F_\sigma(X)\in \overline \Lambda^{p,2,p}_{\mathcal{P}_T,S,\beta}(L(U,H))$ exists, that it is continuous in $X\in \Lpaths$, $Y_1,\ldots,Y_{n-1}\in \mathcal{L}^{(n-1)p}_{\mathcal{P}_T}(\paths)$, and that, for all $u\in U$,
$$
\partial ^{n-1}_{Y_1\ldots Y_{n-1}}F_\sigma(X)(\omega,t)u=
\partial ^{n-1}_{Y_1(\omega)\ldots Y_{n-1}(\omega)}
(\sigma((\omega,t),X(\omega))u)\qquad \mathbb{P} \otimes m\mbox{-a.e.\ }(\omega,t)\in\Omega_T.
$$
For $\epsilon\in \mathbb{R}\setminus\{0\}$, by strongly continuous G\^ateaux differentiability of 
$$
x \mapsto  \partial ^{n-1}_{Y_1(\omega)\ldots Y_{n-1}(\omega)}(\sigma(t,x)u),
$$
 we can write,
 \begin{equation*}
   \begin{split}
     \Delta_{\epsilon Y_n}& \partial ^{n-1}_{Y_1\ldots Y_{n-1}} F_\sigma(X) (\omega,t)u
\coloneqq
\epsilon^{-1} \left(  \partial ^{n-1}_{Y_1\ldots Y_{n-1}}F_\sigma(X+\epsilon Y_n) (\omega,t)u- \partial ^{n-1}_{Y_1\ldots Y_{n-1}} F_\sigma(X) (\omega,t)u \right) \\
&=
\epsilon^{-1}
 \left( 
 \partial ^{n-1}_{Y_1(\omega)\ldots Y_{n-1}(\omega)}
(\sigma((\omega,t),X(\omega)+\epsilon Y_n(\omega))u)
- \partial ^{n-1}_{Y_1(\omega)\ldots Y_{n-1}(\omega)}
(\sigma((\omega,t),X(\omega))u) \right) \\
&=
\int_0^1 \partial ^n_{Y_1(\omega)\ldots Y_n(\omega)}
(\sigma((\omega,t),X(\omega)+\epsilon \theta Y_n(\omega))u)d\theta.
   \end{split}
 \end{equation*}
By \eqref{eq:2016-03-30:02} we have,
for all
$\omega\in\Omega$,  $\epsilon\in \mathbb{R}$, $0\leq s<t\leq T$, $m\in \mathcal{M}$,
$$
|
S_{t-s} \partial ^n_{Y_1(\omega)\ldots Y_n(\omega)}
(
\sigma((\omega,s),X(\omega)+\epsilon Y_n(\omega))e'_m
)
|_H\leq 
M''(t-s)^{-\gamma}c_m \prod _{j=1}^n|Y_j(\omega)|_\infty.
$$
By the generalized H\"older inequality and by Lebesgue's dominated convergence theorem, we conclude
\begin{equation}\label{2017-05-22:01}
  \begin{multlined}[c][.85\displaywidth]
\lim_{\epsilon\rightarrow 0}\int_0^T
 \left( 
\int_0^t
(t-s)^{-2\beta}
 \left( 
   \mathbb{E}
    \left[ 
       \left( 
         \sum_{m\in \mathcal{M}}
         \left| S_{t-s}
          \left( 
            \Delta_{\epsilon Y_n}
             \partial ^{n-1}_{Y_1\ldots Y_{n-1}}
            F_\sigma(X)
            (\omega,s)e'_m\right.\right.\right.\right.\right.\right.\\
\left.\left.\left.\left.\left.\left.            -
             \partial ^n_{Y_1(\omega)\ldots Y_n(\omega)}
             (\sigma((\cdot,s),X)e'_m)
              \right) \right|^2_H
            \right) ^{p/2}
          \right] 
        \right) ^{2/p}
        ds
      \right)^{p/2} dt=0.
\end{multlined}
\end{equation}
Then $ \partial ^{n-1}_{Y_1\ldots Y_{n-1}}F_\sigma$ has directional derivative at $X$ for the increment $Y_n$, given by, for all $u\in U$,
$$
 \partial _{Y_n} \partial ^{n-1}_{Y_1\ldots Y_{n-1}}F_\sigma(X)(\omega,t)u= \partial ^n_{Y_1(\omega)\ldots Y_n(\omega)}
(\sigma((\omega,t),X(\omega))u)\qquad \mathbb{P} \otimes m\mbox{-a.e.\ }(\omega,t)\in \Omega_T.
$$
The continuity of $ \partial _{Y_n} \partial ^{n-1}_{Y_1\ldots Y_{n-1}}F_\sigma(X)$ with respect to $X\in \Lpaths$, $Y_1,\ldots, Y_n\in \mathcal{L}^{np} _{\mathcal{P}_T}(H)$, is proved as for the case $n=1$.
Then $F_\sigma\in \Gat{\Lpaths}{\overline \Lambda^{p,2,p}_{\mathcal{P}_T,S,\beta}(L(U,H))}{\mathcal{L}^{np}_{\mathcal{P}_T}(H)}{n}$.
The uniform boundedness of the differentials is
 obtained by
\eqref{eq:2016-03-30:02}, 
\eqref{2017-05-22:01},
and the generalized H\"older inequality.
\end{proof}

Due to the fact that $X^{t,Y}$ is the fixed point of $\psi(Y,\cdot)$ and due to the structure of $\psi$, the previous lemma permits to easily obtain the following

\begin{theorem}\label{2016-04-05:05}
  Suppose that Assumption~\ref{2016-04-05:02} is satisfied.
   Let $t\in[0,T]$, $ p> p^*$, $p\geq n$.
  Then
  the map
  \begin{equation}
    \label{eq:2016-04-05:03}
\mathcal{L}^{p^n}
_{\mathcal{P}_T}(\paths)
\rightarrow \Lpaths,\ 
Y \mapsto  X^{t,Y}
  \end{equation}
belongs to $\Gatot{\mathcal{L}^{p^n}
_{\mathcal{P}_T}(\paths)
}{\Lpaths}{n}$ and the G\^ateaux differentials 
up to order $n$
are uniformly bounded by a constant depending only on
$T,p,\gamma,g,M,M',M'',|c|_{\ell^2(\mathcal{M})}$.
\end{theorem}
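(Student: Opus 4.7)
The proof proceeds by applying Theorem~\ref{teo:derivabilita.punto.fisso} to the parametric contraction
\[
\psi(Y,X) = \id^S_t(Y) + S\conv F_b(X) + S\sconv F_\sigma(X),
\]
whose fixed point in $X$ (for fixed $Y$) is $X^{t,Y}$. Fix $\beta \in (1/p, 1/2-\gamma)$. We identify the parameter space $X$ of Theorem~\ref{teo:derivabilita.punto.fisso} with $\lpaths{p^n}$, take $U$ to be this whole space, and introduce the decreasing sequence of Banach spaces
\[
Y_k \coloneqq \lpaths{p^k}, \quad k=1,\ldots,n,
\]
which satisfies $Y_1 \supset Y_2 \supset \ldots \supset Y_n$ with continuous inclusions (Jensen). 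The estimates leading to Theorem~\ref{2016-04-13:00}, repeated at integrability level $p^k$ instead of $p$, show that $\psi$ maps $U\times Y_k$ into $Y_k$ and is a parametric $\alpha$-contraction in a $\lambda$-weighted norm, for a common $\lambda$ and a uniform $\alpha<1$ across $k=1,\ldots,n$; this verifies conditions (1)--(6) of Assumption~\ref{2016-02-24:00}.

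The decisive observation for conditions (7)--(9) is that $\psi$ depends linearly on the parameter $Y \in X$ through $\id^S_t$. Hence all pure $X$-derivatives of $\psi$ of order $\geq 2$ vanish, and all mixed partial derivatives involving at least one $X$-direction and at least one $Y_k$- or $Y_{k+1}$-direction also vanish, because any state-variable derivative of $\psi$ no longer depends on $Y$. This collapses condition (8) to a triviality and reduces (7) and (9) to $n$-fold G\^ateaux differentiability of $X \mapsto S\conv F_b(X) + S\sconv F_\sigma(X)$ from $Y_k$ into $Y_k$ with respect to $Y_{k+1}$-increments. The proof of Lemma~\ref{2016-04-05:04} applies verbatim with $p$ replaced by $p^k$, yielding for $j=1,\ldots,n$
\[
F_b \in \Gat{Y_k}{L^{p^k,1}_{\mathcal{P}_T}(H)}{\lpaths{jp^k}}{j}, \quad F_\sigma \in \Gat{Y_k}{\overline\Lambda^{p^k,2,p^k}_{\mathcal{P}_T,S,\beta}(L(U,H))}{\lpaths{jp^k}}{j},
\]
together with explicit uniform derivative bounds depending only on $T,p,\beta,\gamma,|g|_{L^1((0,T),\mathbb{R})},M'',|c|_{\ell^2(\mathcal{M})}$. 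The hypothesis $p\geq n$ gives $p^{k+1}\geq np^k \geq jp^k$ for all admissible $j,k$, so $Y_{k+1} \hookrightarrow \lpaths{jp^k}$ continuously and the required $Y_{k+1}$-directional differentiability of $\psi$ follows after composing with the bounded linear maps $S\conv\#$ and $S\sconv\#$.

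Applying Theorem~\ref{teo:derivabilita.punto.fisso}, the fixed-point map $\varphi\colon Y \mapsto X^{t,Y}$ belongs to $\Gatot{U}{Y_{n-j+1}}{j}$ for $j=1,\ldots,n$; the case $j=n$ reads $\varphi \in \Gatot{\lpaths{p^n}}{Y_1}{n} = \Gatot{\lpaths{p^n}}{\Lpaths}{n}$, which is exactly the claimed regularity. Uniform boundedness of the G\^ateaux differentials up to order $n$ then follows from Corollary~\ref{corr:2012-04-20-aa}: the bounds \eqref{eq:2012-04-20-ab} are provided by the operator norms of $\id^S_t$, $S\conv\#$, $S\sconv\#$ (depending only on $T,p,\beta,M'$) together with the uniform derivative bounds on $F_b,F_\sigma$ supplied by the extension of Lemma~\ref{2016-04-05:04}. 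The main technical subtlety is the precise calibration between the nested chain $Y_k = \lpaths{p^k}$ and the order-dependent integrability loss in Lemma~\ref{2016-04-05:04}; this is exactly what the hypothesis $p \geq n$ is designed to achieve, and it is what makes Theorem~\ref{teo:derivabilita.punto.fisso}'s framework with several nested subspaces (rather than a single one) indispensable for orders $n \geq 2$.
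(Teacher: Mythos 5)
Your proposal is correct and follows essentially the same route as the paper's proof: the same nested chain $Y_k=\mathcal{L}^{p^k}_{\mathcal{P}_T}(\paths)$, the same use of $p\geq n$ to guarantee $np^k\leq p^{k+1}$ so that Lemma~\ref{2016-04-05:04} at level $p^k$ supplies conditions (7)--(9) of Assumption~\ref{2016-02-24:00}, the same $\lambda$-weighted norm to obtain a uniform contraction constant, and the same appeal to Theorem~\ref{teo:derivabilita.punto.fisso} and Corollary~\ref{corr:2012-04-20-aa}. Your explicit remark that linearity of $\psi$ in the parameter $Y$ kills all higher pure parameter derivatives and all mixed parameter--state derivatives is left implicit in the paper's proof (it appears only in the $n=2$ discussion afterwards), but it is the correct justification.
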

\begin{proof}
Let $\beta\in (1/p,1/2-\gamma)$.
We have  $p^k> p^*$ and $\beta\in (1/p^k,1/2-\gamma)$
for all $k=1,\ldots,n$. 
Then, for  $k=1,\ldots,n$,
 the map
$$
\psi_k\colon 
\mathcal{L}^{p^k}_{\mathcal{P}_T}(\paths)
\times
\mathcal{L}^{p^k}_{\mathcal{P}_T}(\paths)
\rightarrow
\mathcal{L}^{p^k}_{\mathcal{P}_T}(\paths),\ 
(Y,X)
 \mapsto 
\id^S_t(Y)
+S\conv F_b(X)
+S\sconv F_\sigma(X)
$$
is well-defined,
where we have implicitly 
chosen the space
$L^{p^k,1}_{\mathcal{P}_T}(H)$ as codomain of $F_b$ and
 $\overline \Lambda_{\mathcal{P}_T,S,\beta}^{p^k,2,p^k}(L(U,H))$ as codomain of $F_\sigma$.
Since the functions
\begin{gather*}
  \mathcal{L}^{p^k}_{\mathcal{P}_T}(\paths)\rightarrow
\mathcal{L}^{p^k}_{\mathcal{P}_T}(\paths)\\ 
S\conv \#\colon L^{p^k,1}_{\mathcal{P}_T}(H)\rightarrow \mathcal{L}^{p^k}_{\mathcal{P}_T}(\paths)\\
S\sconv \#\colon
\overline \Lambda^{p^k,2,p^k}_{\mathcal{P}_T,S,\beta}(L(U,H))\rightarrow \mathcal{L}^{p^k}_{\mathcal{P}_T}(\paths)
\end{gather*}
are linear and continuous,
with an upper bound for the operator norms 
depending only on $\beta,M',T,p$,
we have,
 by
applying
Lemma \ref{2016-04-05:04},
 for $k,j=1,\ldots,n$,
$$
\psi_k\in \Gat{
\mathcal{L}^{p^k}_{\mathcal{P}_T}(\paths)\times
\mathcal{L}^{p^k}_{\mathcal{P}_T}(\paths)
}{
\mathcal{L}^{p^k}_{\mathcal{P}_T}(\paths)}
{
\mathcal{L}^{p^k}_{\mathcal{P}_T}(\paths)\times
\mathcal{L}^{jp^k}_{\mathcal{P}_T}(\paths)
}
{j},
$$
with differentials bounded by a constant depending only on $g,\gamma,M,M',M'',|c|_{\ell^2(\mathcal{M})},T$,
on $p^k$ (hence on $p$), and
 on $\beta$, which depends on $p,\gamma$.
In particular, 
since $np^k\leq p^{k+1}$,  we have,
for the rescritions $\psi_{k|\mathcal{L}^{p^n}_{\mathcal{P}_T}(\paths)\times
\mathcal{L}^{p^k}_{\mathcal{P}_T}(\paths)}$
of $\psi_k$ to
$\mathcal{L}^{p^n}_{\mathcal{P}_T}(\paths)\times
\mathcal{L}^{p^k}_{\mathcal{P}_T}(\paths)$,
\begin{equation*}
  \begin{dcases}
  \psi_{k|\mathcal{L}^{p^n}_{\mathcal{P}_T}(\paths)\times
\mathcal{L}^{p^k}_{\mathcal{P}_T}(\paths)}\in 
\Gatot{
\mathcal{L}^{p^n}_{\mathcal{P}_T}(\paths)\times
\mathcal{L}^{p^k}_{\mathcal{P}_T}(\paths)
}{
\mathcal{L}^{p^k}_{\mathcal{P}_T}(\paths)}{1}\\
\psi_{k|\mathcal{L}^{p^n}_{\mathcal{P}_T}(\paths)\times
\mathcal{L}^{p^k}_{\mathcal{P}_T}(\paths)}\in \Gat{
\mathcal{L}^{p^n}_{\mathcal{P}_T}(\paths)\times
\mathcal{L}^{p^k}_{\mathcal{P}_T}(\paths)
}{
\mathcal{L}^{p^k}_{\mathcal{P}_T}(\paths)}
{
\mathcal{L}^{p^n}_{\mathcal{P}_T}(\paths)\times
\mathcal{L}^{p^{k+1}}_{\mathcal{P}_T}(\paths)
}
{n}
\end{dcases}
\end{equation*}
for $k=1,\ldots,n$,
with the G\^ateaux differentials that are uniformly bounded
by a constant depending only on 
$g$,$\gamma$, $M$,$M'$,$M''$,$|c|_{\ell^2(\mathcal{M})},T$, on $\beta$ (hence on $p,\gamma$), and on $p^n,p^k,p^{k+1}$ (hence on $p$).

By \eqref{eq:2016-03-25:02} and \eqref{eq:2016-03-25:03} (where $p$ should be replaced by $p^k$),
 there exists $\lambda>0$, depending only on $g,\gamma,M,M',\beta,T$, and on $p^k$ (hence on $p$), such that $\psi_k$ is a parametric $1/2$-contraction with respect to the second variable, uniformly in the first one, when the space
$\mathcal{L}^{p^k}_{\mathcal{P}_T}(\paths)$ is 
endowed with the equivalent norm $|\cdot|_{\mathcal{L}^{p^k}_{\mathcal{P}_T}(\paths)
,\lambda}$.
Then we can assume that the uniform bound of the G\^ateaux differentials of $\psi_k$, for $k=1,\ldots,n$, holds with respect to the equivalent norms
$|\cdot|_{\mathcal{L}^{p^k}_{\mathcal{P}_T}(\paths)
,\lambda}$, and is again depending only on 
 $g$, $\gamma$, $M$,$M'$,$M''$,$|c|_{\ell^2(\mathcal{M})}$, $T$, $p$.

Now consider Assumption~\ref{2016-02-24:00},
after setting: 
\begin{enumerate}[-]
\item $\alpha\coloneqq 1/2$;
\item $U\coloneqq X\coloneqq (\mathcal{L}^{p^n}_{\mathcal{P}_T}(\paths),|\cdot|_{\mathcal{L}^{p^n}_{\mathcal{P}_T}(\paths),\lambda})$;
\item $Y_1\coloneqq (\Lpaths,|\cdot|_{\Lpaths,\lambda})$,
\ldots,
$Y_k\coloneqq (\mathcal{L}^{p^k}_{\mathcal{P}_T},|\cdot|_
{\mathcal{L}^{p^k}_{\mathcal{P}_T}(\paths),\lambda})$, \ldots,
$Y_n\coloneqq (\mathcal{L}^{p^n}_{\mathcal{P}_T},|\cdot|_{\mathcal{L}^{p^n}_{\mathcal{P}_T}(\paths),\lambda})$;
\item $h_1\coloneqq 
\psi_{1|\mathcal{L}^{p^n}_{\mathcal{P}_T}(\paths)\times \mathcal{L}^{p}_{\mathcal{P}_T}(\paths)}$, \ldots, $h_k\coloneqq \psi_{k|\mathcal{L}^{p^n}_{\mathcal{P}_T}(\paths)\times \mathcal{L}^{p^k}_{\mathcal{P}_T}(\paths)}$, \ldots, $h_n\coloneqq \psi_{n|\mathcal{L}^{p^n}_{\mathcal{P}_T}(\paths)\times \mathcal{L}^{p^n}_{\mathcal{P}_T}(\paths)}$.
\end{enumerate}
The discussion above, together with the smooth dependence of
$h_k$ on the first variable,
 shows that Assumption~\ref{2016-02-24:00} is verified.
We can then apply Theorem~\ref{teo:derivabilita.punto.fisso}, which provides 
$$
(\eqref{eq:2016-04-05:03}=)\ 
\mathcal{L}^{p^n}
_{\mathcal{P}_T}(\paths)
\rightarrow \Lpaths,\ 
Y \mapsto  X^{t,Y},
\in
\Gatot{
\mathcal{L}^{p^n}_{\mathcal{P}_T}(\paths)
}{
\Lpaths
}{n}.
$$
Finally, 
by applying 
Corollary \ref{corr:2012-04-20-aa}, we 
obtain
 the uniform boundedness of the G\^ateaux differentials 
up to order $n$
of 
$\eqref{eq:2016-04-05:03}$, with a  bound 
that depends
only on
$T$,$\gamma$,$g$,$M$,$M'$,$M''$, $|c|_{\ell^2(\mathcal{M})}$,$p$.
\end{proof}

\begin{remark}\label{2017-05-01:00}
  As said in the introduction, we obtain
the G\^ateaux differentiability  of $x \mapsto  X^{t,x}$
by studying the parametric contraction providing $X^{t,x}$ as its unique fixed point, similarly as done in \cite{DaPrato2004} for the non-path-dependent case.
A different approach consists in studying directly the variations $\lim_{h\rightarrow 0}\frac{X^{t,x+hv}-X^{t,x}}{h}$, showing that the limit exists
(under suitable smooth assumptions on the coefficients) and 
is continuous with respect to $v$, for fixed $t,x$. This would provide the  existence of the G\^ateaux differential $ \partial X^{t,x}$.
Usually, in this way one shows also that $ \partial X^{t,x}.v$ solves  an SDE.
By using this SDE, one could go further and prove that the second order derivative $ \partial ^2 X^{t,x}. (v,w)$ exists, and that it is continuous in $v,w$, for fixed $t,x$. This would provide the second order G\^ateaux differentiability of $x \mapsto X^{t,x}$.
In this way, it is possible also to study the continuity of the G\^ateaux differentials, by considering the SDEs solved by the directional derivatives, and to obtain Fr\'echet differentiability (under suitable assumptions on the coefficients, e.g.\ uniformly continuous Fr\'echet differentiability).
By doing so, first- and second-order Fr\'echet differentiability are proved in \cite{Knoche2001}.
But
if one wants to use these methods to obtain
 derivatives of a generic order $n\geq 3$, then 
a recursive formula providing the SDE solved by the $(n-1)$th-order derivatives is needed, hence we fall back to a statement like Theorem~\ref{teo:derivabilita.punto.fisso}.
One could also try to prove the Fr\'echet differentiability of $x \mapsto  X^{t,x}$
 by studying directly
the Fr\'echet differentiability of the parametric contractions providing the mild solution $X^{t,x}$.
This is the approach
followed in
\cite[Theorem~3.9]{Gawarecki2011}, for orders $n=1,2$.
Nevertheless, we notice that the proof of \cite[Theorem~3.8]{Gawarecki2011}, on which \cite[Theorem~3.9]{Gawarecki2011} relies, contains some inaccuracy: 
it is not clear
why the term 
$|\eta(s)|_H/|\eta|_{\mathcal{\tilde H}_2}$ is bounded by $1$, uniformly in $(\omega,s)$, when $\eta$ is  only supposed to be a  process such that $|\eta|^2_{\mathcal{\tilde H}_2}\coloneqq \sup_{s\in[0,T]}\mathbb{E}[|\eta(s) |_H^2]<\infty$.
\end{remark}

\medskip
Let $n=2$ and let $h_1$ as in the proof of Theorem \ref{2016-04-05:05}.
By continuity and linearity of $\id^S_t$, $S\conv \#$, $S\sconv\#$, and by recalling 
Lemma \ref{2016-04-05:04},
we have, for $Y,Y_1,Y_2\in \mathcal{L}^{p^2}_{\mathcal{P}_T}(\paths)$ (the space of the first variable of $h_1$), $X,X_1,X_2\in \Lpaths$ (the space of the second variable of $h_1$),
\begin{equation*}
  \begin{dcases}
         \partial _{Y_1}h_1(Y,X)=\id^S_t(Y_1)\\
     \partial _{X_1}h_1(Y,X)=S\conv  \partial _{X_1}F_b(X)+S\sconv  \partial _{X_1}F_\sigma(X)\\
     \partial^2 _{Y_1Y_2}h_1(Y,X)=     \partial^2 _{Y_1X_1}h_1(Y,X)=
0\\
     \partial^2 _{X_1X_2}h_1(Y,X)=S\conv  \partial^2 _{X_1X_2}F_b(X)+S\sconv  \partial^2 _{X_1X_2}F_\sigma(X).
   \end{dcases}
 \end{equation*}
Then, by Theorem \ref{teo:derivabilita.punto.fisso},
we have
\begin{subequations}
  \begin{equation}
  \label{eq:2016-04-05:06}
   \partial _{Y_1} X^{t,Y}
=\id_t^S(Y_1) 
+
S\conv  \partial _{ \partial _{Y_1} X^{t,Y}}F_b(X^{t,Y})+S\sconv  \partial _{ \partial _{Y_1} X^{t,Y}}F_\sigma(X^{t,Y})
\end{equation}
\begin{equation}
  \label{eq:2016-04-05:07}
  \begin{split}
       \partial ^2_{Y_1Y_2}X^{t,Y}
  =&
S\conv  \partial _{\partial ^2_{Y_1Y_2}X^{t,Y}}F_b(X)+S\sconv  \partial _{\partial ^2_{Y_1Y_2}X^{t,Y}}F_\sigma(X)\\
&+
S\conv  \partial^2 _{ \partial _{Y_1}X^{t,Y}\partial _{Y_2}X^{t,Y}}F_b(X)+S\sconv  \partial^2 _{ \partial _{Y_1}X^{t,Y}\partial _{Y_2}X^{t,Y}}F_\sigma(X)
\end{split}
\end{equation}
\end{subequations}
where the equality  \eqref{eq:2016-04-05:06} holds in the space
$\mathcal{L}^{p^2}_{\mathcal{P}_T}(\paths)$
and the equality \eqref{eq:2016-04-05:07} holds in the space
$\mathcal{L}^{p}_{\mathcal{P}_T}(\paths)$.
Formulae
\eqref{eq:2016-04-05:06} and
\eqref{eq:2016-04-05:07}
 generalize to the present setting the well-known 
SDEs
for the first- and  second-order derivatives with respect to the initial datum of mild solutions
of non-path-dependent SDEs
 (\cite[Theorem 9.8 and Theorem 9.9]{DaPrato2014}).

 \begin{remark}
\label{2017-05-01:01}
   Suppose that $\paths =\mathbb{D}$, where $\mathbb{D}$
   is the space of right-continuous left-limited functions $[0,T]\rightarrow H$.
  Notice that $\mathbb{D}$ satisfies all the properties required at p.\ \pageref{2016-04-13:09}.
   Then our setting applies and \eqref{eq:2016-04-05:06}-\eqref{eq:2016-04-05:07} provide equations for the first- and  second-order directional derivatives of $X^{t,Y}$ with respect to vectors belonging to $\mathcal{L}^{p^2}_{\mathcal{P}_T}(\mathbb{D})$.
   In particular,
if $\varphi\colon \mathbb{D}\rightarrow \mathbb{R}$ is a suitably regular functional,
then
the so-called ``vertical derivatives''
 in the sense of
Dupire
of $F(t,\mathbf{x})\coloneqq \mathbb{E}[\varphi(X^{t,\mathbf{x}})]$,
used in the finite dimensional It\=o calculus
developed
by \cite{Cont2010a,Cont2010,Cont2013,Dupire2009} to show that $F$ solves a path-dependent Kolmogorov equation associated to $X$,
can be classically obtained by
the chain rule
starting from
the G\^ateaux derivatives
$ \partial _{Y_1}X^{t,Y}$, $ \partial ^2_{Y_1Y_2}X^{t,Y}$,
where
$y_1,y_1\in H$ and
$Y_1\coloneqq \mathbf{1}_{[t,T]}(\cdot)y_1,Y_2\coloneqq \mathbf{1}_{[t,T]}(\cdot)y_2$.
 \end{remark}

\subsection{Perturbation of path-dependent SDEs}\label{2017-04-28:03}

In this section we study the stability of the mild solution $X^{t,Y}$ and of its G\^ateaux derivatives with respect to perturbations of the data
$t,Y,S,b,\sigma$.

Let us fix sequences 
$\mathbf{t}\coloneqq\{t_j\}_{j\in\mathbb{N}}\subset [0,T]$, 
$\{S_j\}_{j\in \mathbb{N}}\subset L(H)$, $\{b_j\}_{j\in \mathbb{N}}$, $\{\sigma_j\}_{j\in \mathbb{N}}$,
satisfying the following assumption.

\begin{assumption}\label{2016-04-05:08}
Let $b$, $\sigma$, $g$, $\gamma$, $M$, be as in  
Assumption~\ref{2016-03-24:08}.
Assume that
  \begin{enumerate}[(i)]
  \item $\{t_j\}_{j\in \mathbb{N}}$ is a sequence 
converging
to $ \hat t$ in $[0,T]$;
  \item\label{2016-04-05:09} for all $j\in \mathbb{N}$, $b_j\colon (\Omega_T\times \paths,\mathcal{P}_T
 \otimes \mathcal{B}_\paths)\rightarrow (H,\mathcal{B}_H)$ is measurable; 
\item\label{2016-04-05:10} for all $j\in \mathbb{N}$, $\sigma_j\colon (\Omega_T\times \paths,\mathcal{P}_T
 \otimes \mathcal{B}_\paths)\rightarrow
L(U,H)$ is strongly measurable;
\item\label{2016-04-05:11} 
for all $j\in \mathbb{N}$ and all $ ((\omega,t),\mathbf{x})\in \Omega_T \times \paths$,
$ b_j((\omega,t),\mathbf{x})=b_j((\omega,t),\mathbf{x}_{t\wedge \cdot})$
and
$  \sigma_j((\omega,t),\mathbf{x})=\sigma_j((\omega,t),\mathbf{x}_{t\wedge \cdot})$;
\item\label{2016-04-05:12} 
for all $j\in \mathbb{N}$,
  \begin{equation*}
    \begin{dcases}
      |b_j((\omega,t),\mathbf{x})|_H\leq g(t)(1+|\mathbf{x}|_\infty)& \forall ((\omega,t),\mathbf{x})\in\Omega_T\times \paths,\\
      |b_j((\omega,t),\mathbf{x})-
b_j((\omega,t),\mathbf{x}')|_H\leq g(t)|\mathbf{x}-\mathbf{x}'|_\infty& \forall (\omega,t)\in\Omega_T,\ \forall \mathbf{x},\mathbf{x}'\in \paths;
    \end{dcases}
  \end{equation*}
\item\label{2016-04-05:13} 
for all $j\in \mathbb{N}$,
  \begin{equation*}
\hskip-1.1cm    \begin{dcases}
      |(S_j)_t\sigma_j((\omega,s),\mathbf{x})|_{L_2(U,H)}\leq M t^{-\gamma}(1+|\mathbf{x}|_\infty)& \hskip-7pt\forall ((\omega,s),\mathbf{x})\in\Omega_T\times \paths,\ \forall t\in (0,T],\\
      |(S_j)_t\sigma_j((\omega,s),\mathbf{x})-
(S_j)_t\sigma_j((\omega,s),\mathbf{x}')|_{L_2(U,H)}\leq M t^{-\gamma}|\mathbf{x}-\mathbf{x}'|_\infty&\hskip-7pt \forall (\omega,s)\in\Omega_T,\forall\mathbf{x},\mathbf{x}'\in \paths, \forall t\in(0,T];
    \end{dcases}
  \end{equation*}
\item for all $t\in[0,T]$, $\{(S_j)_t\}_{j\in \mathbb{N}}$ converges strongly to $S_t$, that is
$$
\lim_{j\rightarrow \infty}(S_j)_tx=S_tx\qquad \forall x\in H;
$$
\item 
the following convergences hold true:
  \begin{equation*}
    \label{eq:2016-04-07:00}
    \begin{dcases}
    \lim_{j\rightarrow \infty}
|b((\omega,t),\mathbf{x})-b_j((\omega,t),\mathbf{x})|_H=0 & \forall (\omega,t)\in\Omega_T,\ \forall \mathbf{x}\in \paths\\
    \lim_{j\rightarrow \infty}
    |
S_t\sigma((\omega,s),\mathbf{x})-(S_j)_t\sigma_j((\omega,s),\mathbf{x})|_{L_2(U,H)}=0 &\forall (\omega,s)\in\Omega_T,\
\forall t\in(0,T],
\ 
\forall \mathbf{x}\in \paths.
\end{dcases}
\end{equation*}
\end{enumerate}
\end{assumption}
Under 
Assumption~\ref{2016-04-05:08},
for  $p>p^*$ and $\beta\in (1/p,1/2-\gamma)$,
we define $\id^{S_j}_{t_j}$, $F_{b_j}$, $F_{\sigma_j}$, $S_j\convt{t_j}\#$, $S_j\sconvt{t_j}\#$, $\psi_j$, similarly as done for $\id ^S_t$,
$F_b$, $F_\sigma$, $S\conv\#$, $S\sconv \#$, $\psi$, that is
\begin{subequations}
  \begin{equation*}
    \id^{S_j}_{t_j}\colon \Lpaths\rightarrow \Lpaths,\ Y \mapsto
\mathbf{1}_{[0,t_j]}(\cdot)
 Y+\mathbf{1}_{(t_j,T]}(\cdot)(S_j)_{\cdot-t_j}Y_{t_j}
  \end{equation*}
  \begin{equation*}
    F_{b_j}\colon \Lpaths\rightarrow L^{p,1}_{\mathcal{P}_T}(H),\ X \mapsto  b_j((\cdot,\cdot),X)
  \end{equation*}
  \begin{equation*}
    F_{\sigma_j}\colon \Lpaths\rightarrow \overline \Lambda^{p,2,p}_{\mathcal{P}_T,S_j,\beta}(L(U,H)),\ X \mapsto \sigma_j((\cdot,\cdot),X)
  \end{equation*}
  \begin{equation*}
    S_j\convt{t_j} \#\colon L^{p,1}_{\mathcal{P}_T}(H)\rightarrow \Lpaths,\ X \mapsto 
\mathbf{1}_{[t_j,T]}(\cdot)\int_{t_j}^\cdot  (S_j)_{\cdot-s}X_sds
\end{equation*}
\begin{equation*}
  S_j\sconvt{t_j} \#\colon \overline \Lambda^{p,2,p}_{\mathcal{P}_T,S_j,\beta}(L(U,H))\rightarrow \Lpaths,\ \Phi \mapsto (S_j)\sconvt{t_j} \Phi.
\end{equation*}
\begin{equation*}\label{2016-04-19:00}
  \psi^{(j)}\colon \Lpaths\times \Lpaths
  \rightarrow \Lpaths,\ (Y,X) \mapsto \id^{S_j}_{t_j}(Y)+S_j\convt{t_j} F_{b_j}(X)+S_j\sconvt{t_j} F_{\sigma_j}(X).
\end{equation*}
\end{subequations}

In a similar way as done for $\psi$, we can obtain 
\eqref{eq:2016-03-25:02} for each $\psi^{(j)}$, with a constant $C'_{\lambda,g,\gamma,M',\beta,T,p,M}$ independent
of $j$.
In particular, there exists $\lambda_0$ large enough such that, 
for all $\lambda>\lambda_0$ and all $Y,X\in \Lpaths$,
\begin{equation}
\label{2016-04-13:04}
\begin{multlined}[c][.85\displaywidth]
  |\psi^{(j)}(Y,X)
  -
  \psi^{(j)}(Y',X')|_{\Lpaths,\lambda}\leq\\
  \leq M'|Y-Y'|_{\Lpaths,\lambda}+
  \frac{1}{2}|X-X'|_{\Lpaths,\lambda},\qquad \forall j\in \mathbb{N},
\end{multlined}
\end{equation}
 where
$$
\begin{minipage}{0.8\linewidth}
  \begin{center}
      $M'$ is any upper bound for ${\displaystyle \sup_{\substack{t\in[0,T]\\j\in \mathbb{N}}}|(S_j)_t|_{L(H)}}$.
    \end{center}
  \end{minipage}
$$
Let $A_j$ denotes the infinitesimal generator of $S_j$.
By arguing as done in the proof of Theorem \ref{2016-04-13:00},
we have that,
 for each $j\in \mathbb{N}$, there exists a unique mild solution $X^{t,Y}_j$ in $\Lpaths$  to
\begin{equation}\label{2016-04-13:01}
\begin{dcases}
  d (X_j)_s = \left(A_j(X_j)_s + b_j\left((\cdot,s),X_j\right)\right)  d t+ \sigma_j\left((\cdot,s),X_j\right)  d W_s & s\in(t_j, T]\\
(X_j)_s   = Y_s & s\in [0,t_j],
\end{dcases}
\end{equation}
and that,  due to the equivalence of the norms $|\cdot|_{\Lpaths,\lambda}$, the map $\Lpaths\rightarrow \Lpaths, \ Y \mapsto X^{t_j,Y}_j$ is Lipschitz, with Lipschitz constant bounded by some $C''_{g,\gamma,M,M',T,p}$ depending only on $g,\gamma,M,M',T,p$ and independent of $j$.


\medskip

For a given set $B\subset [0,T]$, let us denote
$$
\paths_B\coloneqq   \left\{ \mathbf{x}\in \paths\colon 
\forall t\in B,\ 
\mathbf{x}\ \mbox{is continuous in $t$} \right\}.
$$
Then $\paths_B$ is a closed subspace of $\paths$
 and it satisfies
all the three  conditions required for $\paths$ at
p.\ \pageref{2016-04-13:09}.
Moreover, if $t\in[0,T]$ and $Y\in \mathcal{L}^p_{\mathcal{P}_T}(\paths_B)$, then $X^{t,Y}\in \mathcal{L}^p_{\mathcal{P}_T}(\paths_B)$, because $X^{t,Y}$ is continuous
on $[t,T]$ (recall that $S\conv \#$ and $S\sconv \#$ are $\mathcal{L}^p_{\mathcal{P}_T}(\Cb{H})$-valued)
and
 coincides with $Y$ on $[0,t]$.


\begin{proposition}\label{2016-04-16:03}
  Suppose that Assumption~\ref{2016-03-24:08}
and
Assumption~\ref{2016-04-05:08} are satisfied
  and let $p> p^*$. 
Then
\begin{equation}
  \label{2016-04-13:08}
  \lim_{j\rightarrow \infty}X_j^{ t_j,Y}=X^{\hat t,Y}
\end{equation}
in $\mathcal{L}^p_{\mathcal{P}_T}(\paths_{\{\hat t\}})$,
 uniformly for $Y$ on compact subsets of $\mathcal{L}^p_{\mathcal{P}_T}(\paths_{\{\hat t\}})$.
\end{proposition}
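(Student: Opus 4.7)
The strategy is to exploit that $X^{\hat t, Y}$ and $X_j^{t_j, Y}$ are, by construction, the unique fixed points of the parametric maps $\psi(Y, \cdot)$ and $\psi^{(j)}(Y, \cdot)$. Both maps are $\tfrac{1}{2}$-contractions in the second variable with respect to the equivalent norm $|\cdot|_{\Lpaths, \lambda}$ for $\lambda$ large enough, uniformly in $j$ (Theorem~\ref{2016-04-13:00} together with \eqref{2016-04-13:04}). Applying the estimate that underlies Lemma~\ref{2016-02-25:02}\emph{(\ref{2016-02-23:02})} yields
\begin{equation*}
|X^{\hat t, Y} - X_j^{t_j, Y}|_{\Lpaths, \lambda} \leq 2\,\bigl|\psi(Y, X^{\hat t, Y}) - \psi^{(j)}(Y, X^{\hat t, Y})\bigr|_{\Lpaths, \lambda}.
\end{equation*}
Since $Y \mapsto X^{\hat t, Y}$ is Lipschitz from $\mathcal{L}^p_{\mathcal{P}_T}(\paths_{\{\hat t\}})$ into $\Lpaths$, it maps compact sets to compact sets, so $\{(Y, X^{\hat t, Y}) : Y \in K\}$ is compact whenever $K$ is. The proposition therefore reduces to proving that $\psi^{(j)}(Y, X) \to \psi(Y, X)$ uniformly on compact subsets of $\mathcal{L}^p_{\mathcal{P}_T}(\paths_{\{\hat t\}}) \times \Lpaths$.

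To establish this, I would decompose
\[
\psi^{(j)} - \psi = (\id^{S_j}_{t_j} - \id^S_{\hat t}) + (S_j\convt{t_j} F_{b_j} - S\convt{\hat t} F_b) + (S_j\sconvt{t_j} F_{\sigma_j} - S\sconvt{\hat t} F_\sigma)
\]
and address each term in turn. For the identity term, I would use the representation $\id^S_t(Y) = Y_{t \wedge \cdot} + \mathbf{1}_{(t, T]}(\cdot)(S_{\cdot - t} - I)Y_t$ and argue that the combination of (a) the uniform bound $\sup_{j,t} |(S_j)_t|_{L(H)} \leq M'$ with strong convergence $(S_j)_t x \to S_t x$ (which on compact sets of $H$ is uniform), (b) the continuity of $Y(\omega)$ at $\hat t$ guaranteed by $Y \in \mathcal{L}^p_{\mathcal{P}_T}(\paths_{\{\hat t\}})$, and (c) Lebesgue's dominated convergence with majorant $(M'+1)|Y|_\infty \in L^p$ gives pointwise convergence in $\Lpaths$; uniformity on compacts of $Y$ follows by standard equicontinuity. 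The drift term is treated analogously, using $b_j \to b$ pointwise, the uniform Lipschitz/linear growth bound through $g$, and $t_j \to \hat t$.

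For the diffusion term, which is the main obstacle, I would proceed in two stages. First, keeping the starting time fixed at $\hat t$, apply the factorization-method estimate \eqref{eq:2016-04-06:01} to bound
\[
\mathbb{E}\bigl[|S\sconvt{\hat t} F_\sigma(X) - S_j\sconvt{\hat t} F_{\sigma_j}(X)|_\infty^p\bigr]
\]
by a triple integral whose integrand tends pointwise to zero by Assumption~\ref{2016-04-05:08} and is dominated by a common integrable majorant coming from Assumption~\ref{2016-03-24:08}\emph{(\ref{2016-03-24:14})} applied to both $(S, \sigma)$ and $(S_j, \sigma_j)$ (the integrability of the majorant is ensured by $\beta < 1/2 - \gamma$); Lebesgue's theorem then yields the convergence. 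Second, the shift of the lower integration limit from $\hat t$ to $t_j$ is absorbed using \eqref{eq:2016-04-16:01}: via the isometry of Remark~\ref{2016-04-07:04}, the family $\{F_{\sigma_j}(X)\}_{j,\,X \in K}$ is compact in the ambient $L^{p,2,p}_{\mathcal{P}_T \otimes \mathcal{B}_T}(L_2(U, H))$ (by the uniform Lipschitz continuity of $F_{\sigma_j}$ and the convergence $F_{\sigma_j} \to F_\sigma$ established in the first stage), so \eqref{eq:2016-04-16:01} applies uniformly. Throughout, uniformity over compact $K \subset \Lpaths$ of the input process $X$ is upgraded from pointwise convergence by exploiting the uniform (in $j$) Lipschitz continuity of $F_{b_j}$ and $F_{\sigma_j}$ in $X$, together with the standard fact that an equi-Lipschitz sequence converging pointwise converges uniformly on compacts.
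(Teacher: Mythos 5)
Your proposal is correct and follows essentially the same route as the paper's proof: reduce the convergence of the fixed points to the convergence of the parametric contractions via Lemma~\ref{2016-02-25:02}, split $\psi^{(j)}-\psi$ into the identity, drift, and stochastic-convolution terms, treat the first two by dominated convergence, and handle the diffusion term by combining the factorization estimate \eqref{eq:2016-04-06:01} with the uniform time-shift estimate \eqref{eq:2016-04-16:01}, upgrading to uniformity on compacts through the uniform-in-$j$ Lipschitz bounds. The only cosmetic deviation is that you shift the lower integration limit for the perturbed pair $(S_j,F_{\sigma_j})$ rather than for the fixed pair $(S,F_\sigma)$ as the paper does, which still works thanks to the precompactness you note via the isometry of Remark~\ref{2016-04-07:04}.
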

\begin{proof}
Let $\psi^{(j)}$ be defined as above (p.\ \pageref{2016-04-19:00}).
  It is clear that, if $Y\in \mathcal{L}^p_{\mathcal{P}_T}(\paths_{\{\hat t\}})$ and $X\in \Lpaths$, then $\psi(Y,X)\in\mathcal{L}^p_{\mathcal{P}_T}(\paths_{\{\hat t\}})$, because it is continuous 
on $[\hat t,T]$ and
 coincides with $Y$ 
on $[0,\hat t]$.
Similarly, $\psi^{(j)}(Y,X)$ is continuous 
on $[t_j,T]$ and
coincides with $Y$ 
on $[0,t_j]$,
than also
 $\psi^{(j)}(Y,X)\in\mathcal{L}^p_{\mathcal{P}_T}(\paths_{\{\hat t\}})$.
Then, if the claimed convergence 
 occurs, it does
 in $\mathcal{L}^p_{\mathcal{P}_T}(\paths_{\{\hat t\}})$.

In order to prove the convergence, we consider the restrictions
$$
\begin{dcases}
  \hat \psi^{(j)}\coloneqq \psi^{(j)}_{|\mathcal{L}^p_{\mathcal{P}_T}(\paths_{\{\hat t\}})\times \Lpaths} &\forall j\in \mathbb{N}\\
\hat \psi\coloneqq \psi_{|\mathcal{L}^p_{\mathcal{P}_T}(\paths_{\{\hat t\}})\times \Lpaths},
\end{dcases}
$$
which are $\mathcal{L}^p_{\mathcal{P}_T}(\paths_{\{\hat t\}})$-valued, as noticed above.
Clearly \eqref{2016-04-13:04} still holds true with $\hat \psi^{(j)}$, $\hat \psi $ in place of $\psi^{(j)}$, $\psi$, respectively, and then
$$
\mathcal{L}^p_{\mathcal{P}_T}(\paths_{\{\hat t\}})\rightarrow \mathcal{L}^p_{\mathcal{P}_T}(\paths_{\{\hat t\}}),\ Y \mapsto X_j^{t_j,Y}
$$
is Lipschitz in $Y$, uniformly in $j$.
We then need only to prove the convergence 
$$
X_j^{t_j,Y}\rightarrow X^{\hat t,Y}
\mbox{\ in\ } \mathcal{L}^p_{\mathcal{P}_T}(\paths_{\{\hat t\}}),
\forall Y\in 
\mathcal{L}^p_{\mathcal{P}_T}(\paths_{\{\hat t\}}).
$$
Thanks to 
Lemma \ref{2016-02-25:02}\emph{(\ref{2016-02-23:02})},
the latter convergence
  reduces to the pointwise convergence
$$
\hat \psi^{(j)}\rightarrow \hat \psi.
$$
Let $Y\in \mathcal{L}^p_{\mathcal{P}_T}(\paths(\{\hat t\}))$. 
Due to the
continuity of $Y(\omega)$ in $\hat t$ for
$\mathbb{P}$-a.e.\ $\omega\in\Omega$,
 the strong continuity of $S_j$ and $S$,
and 
the strong  convergence $S_j\rightarrow S$,
 we have $\id^{S_j}_{t_j}(Y)\rightarrow \id^S_{\hat t}(Y)$ in $\mathcal{L}^p_{\mathcal{P}_T}(\paths_{\{\hat t\}})$
for all $Y\in \Lpaths$
(this can be seen by \eqref{eq:2016-04-21:01}).

We show that $S_j\sconvt{t_j}F_{\sigma_j}(X)\rightarrow S\sconvt{\hat t}F_\sigma(X)$, for all $X\in \Lpaths$.
Write
\begin{equation*}
      S_j\sconvt{t_j}F_{\sigma_j}- S\sconvt{\hat t}F_\sigma
=
(S_j\sconvt{t_j}F_{\sigma_j}- S\sconvt{ t_j}F_\sigma)  +
(S\sconvt{ t_j}F_\sigma- S\sconvt{\hat t}F_\sigma).
\end{equation*}
By Lebesgue's dominated convergence theorem and 
by Assumption~\ref{2016-04-05:08}, 
we have, for  $\beta\in (1/p,1/2-\gamma)$,
\begin{equation*}
  \lim_{j\rightarrow \infty}
\int_0^T
 \left( 
\int_0^t(t-s)^{-2\beta}
 \left( \mathbb{E} \left[ |(S_j)_{t-s}  \sigma_j((\cdot,s),X))-S_{t-s}\sigma((\cdot,s),X))|_{L_2(U,H)}^p  \right]  \right)^{2/p} ds
 \right) ^{p/2}dt=0
\end{equation*}
Then, by \eqref{eq:2016-04-06:01} (which holds uniformly in $t$), 
$$
S_j\sconvt{t_j}F_{\sigma_j}(X)- S\sconvt{ t_j}F_\sigma(X)\rightarrow 0\ \mbox{in}\ \Lpaths.
$$
By \eqref{eq:2016-04-16:01}, we also have
$$
S\sconvt{ t_j}F_{\sigma}(X)- S\sconvt{\hat t}F_{\sigma}(X)\rightarrow 0\ \mbox{in}\ \Lpaths.
$$
Then, we conclude
$$
      S_j\sconvt{t_j}F_{\sigma_j}- S\sconvt{\hat t}F_\sigma\rightarrow 0\ \mbox{in}\ \Lpaths.
$$
By arguing in a very similar way as done for $S_j\sconvt{t_j}F_{\sigma_j}-S\sconvt{\hat t}F_\sigma$, one can prove that
$$
\forall X\in \Lpaths,\ 
S_j\convt{t_j}F_{b_j}(X)
-
S\convt{\hat t}F_{b}(X)
\rightarrow
0\ \mbox{in}\ \Lpaths.
$$
Then $\hat \psi^{(j)}\rightarrow \hat \psi$ pointwise and the proof is complete.
\end{proof}


The following result provides continuity of the mild solution with respect to perturbations of all the data of the system.

\begin{theorem}\label{2016-04-22:09}
Suppose that Assumption~\ref{2016-03-24:08}
and Assumption~\ref{2016-04-05:08}
are satisfied, 
 let  $p> p^*$,
 $Y\in \mathcal{L}^p_{\mathcal{P}_T}(\paths_{\{\hat t\}})$,
 and let $\{Y_j\}_{j\in \mathbb{N}}\subset\Lpaths$ be a sequence converging to $Y$ in $\Lpaths$.
Then
$$
\lim_{j\rightarrow \infty}X_j^{t_j,Y_j}
=
X^{\hat t,Y}\ \mbox{in}\ 
\mathcal{L}^p_{\mathcal{P}_T}(\paths).
$$
\end{theorem}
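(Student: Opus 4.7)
The plan is to reduce Theorem~\ref{2016-04-22:09} to Proposition~\ref{2016-04-16:03} via a standard triangle-inequality split, exploiting the uniform Lipschitz dependence on the initial datum that was recorded just before the statement of Proposition~\ref{2016-04-16:03}. Recall that, thanks to \eqref{2016-04-13:04} and the equivalence of the norms $|\cdot|_{\Lpaths}$ and $|\cdot|_{\Lpaths,\lambda}$, there is a constant $C''_{g,\gamma,M,M',T,p}$, independent of $j$, such that
\[
|X^{t_j,Y}_j-X^{t_j,Y'}_j|_{\Lpaths}\leq C''_{g,\gamma,M,M',T,p}\,|Y-Y'|_{\Lpaths}\qquad\forall Y,Y'\in \Lpaths,\ \forall j\in \mathbb{N}.
\]

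First I would write, for each $j\in \mathbb{N}$,
\[
|X^{t_j,Y_j}_j-X^{\hat t,Y}|_{\Lpaths}\leq |X^{t_j,Y_j}_j-X^{t_j,Y}_j|_{\Lpaths}+|X^{t_j,Y}_j-X^{\hat t,Y}|_{\Lpaths}.
\]
The first summand is bounded above by $C''_{g,\gamma,M,M',T,p}\,|Y_j-Y|_{\Lpaths}$, which tends to $0$ as $j\to\infty$ by the assumed convergence $Y_j\to Y$ in $\Lpaths$. For the second summand, since $Y\in\mathcal{L}^p_{\mathcal{P}_T}(\paths_{\{\hat t\}})$, Proposition~\ref{2016-04-16:03} applies directly and yields $X^{t_j,Y}_j\to X^{\hat t,Y}$ in $\mathcal{L}^p_{\mathcal{P}_T}(\paths_{\{\hat t\}})$, hence a fortiori in $\Lpaths$. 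Adding the two estimates completes the proof.

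There is essentially no obstacle: the non-trivial content is already contained in Proposition~\ref{2016-04-16:03} (which handles the joint perturbation of $t$, $S$, $b$, $\sigma$ for a fixed regular initial condition) and in the $j$-uniform Lipschitz estimate for $Y\mapsto X^{t_j,Y}_j$ (which handles the passage from the fixed limiting initial condition $Y\in \mathcal{L}^p_{\mathcal{P}_T}(\paths_{\{\hat t\}})$ to the perturbed sequence $Y_j\in\Lpaths$ that need not enjoy any continuity at $\hat t$). The only point worth flagging is that one should \emph{not} attempt to apply Proposition~\ref{2016-04-16:03} directly to $Y_j$, because $Y_j$ a priori lies only in $\Lpaths$ and not in $\mathcal{L}^p_{\mathcal{P}_T}(\paths_{\{\hat t\}})$; this is precisely what the triangle-inequality decomposition above circumvents, by freezing the initial datum at the limit $Y$ before invoking the stability result.
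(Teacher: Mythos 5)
Your proposal is correct and is essentially identical to the paper's own proof: the paper uses the same decomposition $X^{\hat t,Y}-X^{t_j,Y_j}_j=(X^{\hat t,Y}-X_j^{t_j,Y})+(X^{t_j,Y}_j-X^{t_j,Y_j}_j)$, handling the first term via Proposition~\ref{2016-04-16:03} and the second via the $j$-uniform Lipschitz (equicontinuity) estimate for $Y\mapsto X^{t_j,Y}_j$. Your remark about why one freezes the initial datum at $Y$ before invoking the stability result is a correct reading of the same point the paper implicitly relies on.
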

\begin{proof}
Write 
\begin{equation}
  \label{eq:2016-04-21:16}
  X^{\hat t,Y}-X^{t_j,Y_j}_j
=
(X^{\hat t,Y}-X_j^{ t_j,Y})
+
(X^{t_j,Y}_j-X^{t_j,Y_j}_j),
\end{equation}
The  term
$X^{\hat t,Y}-X_j^{ t_j,Y}$
 tends to $0$ by 
 Proposition \ref{2016-04-16:03}, 
whereas the term
$X^{t_j,Y}_j-X^{t_j,Y_j}_j$
tends to $0$ by uniform equicontinuity of the family
$$
 \left\{ \Lpaths\rightarrow \Lpaths,\ Y \mapsto X^{t_j,Y}_j \right\} _{j\in \mathbb{N}}.
\eqno\qed
$$
\let\qed\relax
\end{proof}

We end this chapter with a result regarding stability of G\^ateaux differentials of mild solutions.

\begin{assumption}\label{2016-04-11:00}
Let $b,\sigma,g,\gamma,n,c,M''$
be as in
Assumption~\ref{2016-04-05:02},
and let $\{b_j\}_{j\in \mathbb{N}}$, $\{\sigma\}_{j\in \mathbb{N}}$, $\{S_j\}_{j\in \mathbb{N}}$, be as in Assumption~\ref{2016-04-05:08}.
Assume that
 \begin{enumerate}[(i)]
 \item 
for all $j\in \mathbb{N}$, $(\omega,t)\in \Omega_T$, and $u\in U$, $b_j((\omega,t),\cdot)\in \Gatot{\paths}{H}{n}$ and $\sigma_j((\omega,t),\cdot)u\in \Gatot{\paths}{H}{n}$;
\item 
for all $s\in [0,T]$,
  \begin{equation}\label{2016-04-11:01}
  \sup_{\substack{i=1,\ldots,n\\j\in \mathbb{N}}}
    \sup_{\substack{
        \omega\in \Omega\\
        \mathbf{x},\mathbf{y}_1,\ldots,\mathbf{y}_j\in \paths\\
           |\mathbf{y}_1|_\infty=\ldots =|\mathbf{y}_i|_\infty=1
      }
    }
       | \partial ^i_{\mathbf{y}_1\ldots \mathbf{y}_i}b_j((\omega,s),\mathbf{x})|_H
      \leq M'' g(s),
  \end{equation}
and,
for all
$s\in[0,T]$, $t\in(0,T]$,
 and all $ m\in \mathcal{M}$,
  \begin{equation}\label{2016-04-11:02}
    \sup_{\substack{i=1,\ldots,n\\j\in \mathbb{N}}}
    \sup_{
      \substack{
        \omega\in\Omega\\
        \mathbf{x},\mathbf{y}_1,\ldots,\mathbf{y}_i\in \paths\\
        |\mathbf{y}_1|_\infty=\ldots =|\mathbf{y}_i|_\infty=1
      }
      }
   |(S_j)_{t} \partial^i_{\mathbf{y}_1\ldots \mathbf{y}_i} (\sigma_j((\omega,s),\mathbf{x})e'_m))|_H
     \leq M'' t^{-\gamma}c_m;
   \end{equation}
\item\label{2016-04-21:14} 
for all 
 $X\in \paths$,
\begin{equation*}
\hskip-0.7cm \begin{dcases}
    \lim_{j\rightarrow \infty}
    |
    \partial ^i_{\mathbf{y}_1\ldots \mathbf{y}_i}b((\omega,t),\mathbf{x})-\partial ^i_{\mathbf{y}_1\ldots \mathbf{y}_i}b_j((\omega,t),\mathbf{x})|_H=0 & \hskip-5pt\forall (\omega,t)\in\Omega_T\\
    \lim_{j\rightarrow \infty}
|
    S_t \partial ^i_{\mathbf{y}_1\ldots \mathbf{y}_i}(\sigma((\omega,s),\mathbf{x})e'_m)-
(S_j)_t
\partial ^i_{\mathbf{y}_1\ldots \mathbf{y}_i}(\sigma_j((\omega,s),\mathbf{x})e'_m)|_H=0
    &
\hskip-5pt    \begin{dcases}
      \forall \omega\in\Omega,\\
      \forall s\in[0,T],\forall t\in (0,T],\\
       \forall m\in \mathcal{M}.
    \end{dcases}
      \end{dcases}
\end{equation*}
\end{enumerate}
\end{assumption}

\begin{theorem}\label{2016-04-22:10}
Suppose that Assumption~\ref{2016-03-24:08} and
Assumption~\ref{2016-04-05:08} are satisfied, and that, 
for some $n\in \mathbb{N}$, $n\geq 1$,
Assumption~\ref{2016-04-05:02} 
and Assumption~\ref{2016-04-11:00}
are satisfied.
Let $p>p^*$, $p\geq n$.
Then, for $i=1,\ldots,n$,
\begin{equation}
  \label{eq:2016-04-21:03}
   \partial ^i_{Y_1\ldots Y_i}X^{t_j,Y}_j
\rightarrow
 \partial ^i_{Y_1\ldots Y_i}X^{\hat t,Y}
\ \mbox{in}\ \mathcal{L}^p_{\mathcal{P}_T}(\paths_{\{\hat t\}}),
\end{equation}
uniformly for 
$Y,
Y_1,\ldots,Y_i$ in compact subsets of 
$\mathcal{L}^{p^n}_{\mathcal{P}_T}(\paths_{\{\hat t\}})$.
\end{theorem}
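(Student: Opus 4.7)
The plan is to mirror the proof of Theorem~\ref{2016-04-05:05} and to upgrade it to the perturbation setting, by invoking Proposition~\ref{propp:2012-05-23-aa} in place of Theorem~\ref{teo:derivabilita.punto.fisso} (and Corollary~\ref{corr:2012-04-20-aa}). Fix $\beta\in (1/p,1/2-\gamma)$. For each $j\in\mathbb{N}$ and each $k=1,\ldots,n$, define
\[
\psi^{(j)}_k\colon \mathcal{L}^{p^n}_{\mathcal{P}_T}(\paths_{\{\hat t\}})\times \mathcal{L}^{p^k}_{\mathcal{P}_T}(\paths_{\{\hat t\}})\rightarrow \mathcal{L}^{p^k}_{\mathcal{P}_T}(\paths_{\{\hat t\}}),\ (Y,X)\mapsto \id^{S_j}_{t_j}(Y)+S_j\convt{t_j} F_{b_j}(X)+S_j\sconvt{t_j}F_{\sigma_j}(X),
\]
and analogously $\psi_k$ for the unperturbed data (with $\hat t,S,b,\sigma$). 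As observed in the proof of Proposition~\ref{2016-04-16:03}, these maps are indeed $\mathcal{L}^{p^k}_{\mathcal{P}_T}(\paths_{\{\hat t\}})$-valued because the continuity at $\hat t$ is preserved. The bounds on $b_j,\sigma_j,S_j$ in Assumption~\ref{2016-04-05:08} together with those on their G\^ateaux derivatives in Assumption~\ref{2016-04-11:00} are uniform in $j$, so Lemma~\ref{2016-04-05:04} and the arguments of the proof of Theorem~\ref{2016-04-05:05} show that Assumption~\ref{2016-02-24:00} holds for each $\psi^{(j)}_1$ with the same $n$, $\alpha=1/2$ (after passing to equivalent norms $|\cdot|_{\mathcal{L}^{p^k}_{\mathcal{P}_T}(\paths),\lambda}$ for a large enough $\lambda$ that does not depend on $j$, thanks to \eqref{2016-04-13:04}).

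Next I would verify the three convergence hypotheses \emph{(\ref{2016-04-21:04})}--\emph{(\ref{2016-04-21:06})} of Proposition~\ref{propp:2012-05-23-aa} for $h_1^{(j)}\coloneqq \psi^{(j)}_1$ and $h_1\coloneqq \psi_1$. Since $\psi^{(j)}_k$ is affine in $Y$ and the mixed second derivatives in $(Y,X)$ vanish, all relevant convergences reduce to:
\begin{enumerate}[(a)]
\item $\id^{S_j}_{t_j}(Y)\to \id^S_{\hat t}(Y)$ in $\mathcal{L}^{p^k}_{\mathcal{P}_T}(\paths_{\{\hat t\}})$ uniformly on compact subsets of $\mathcal{L}^{p^n}_{\mathcal{P}_T}(\paths_{\{\hat t\}})$, which follows from the argument in the proof of Proposition~\ref{2016-04-16:03} (via \eqref{eq:2016-04-21:01} and the strong convergence $S_j\to S$, together with continuity at $\hat t$);
\item $\partial^i_{X_1\ldots X_i}\psi^{(j)}_k(Y,X)\to \partial^i_{X_1\ldots X_i}\psi_k(Y,X)$ in $\mathcal{L}^{p^k}_{\mathcal{P}_T}(\paths_{\{\hat t\}})$ for $i=1,\ldots,n$, uniformly on compacts.
\end{enumerate}
For \emph{(b)}, I would decompose
\[
\partial^i_{X_1\ldots X_i}\psi^{(j)}_k(Y,X)=S_j\convt{t_j}\partial^i_{X_1\ldots X_i}F_{b_j}(X)+S_j\sconvt{t_j}\partial^i_{X_1\ldots X_i}F_{\sigma_j}(X),
\]
and treat the two pieces through the ``convolution = time-shift + operator'' identity \eqref{eq:2016-04-20:05} and the continuity \eqref{eq:2016-04-16:02} of $(t,\Phi)\mapsto S\sconv\Phi$, in complete analogy with the argument at the end of the proof of Proposition~\ref{2016-04-16:03}. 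Hence it is enough to show the ``frozen-$t$'' convergence $\partial^i_{X_1\ldots X_i}F_{b_j}(X)\to \partial^i_{X_1\ldots X_i}F_b(X)$ in $L^{p^k,1}_{\mathcal{P}_T}(H)$ and the analogous one for $F_{\sigma_j}$ in $\overline\Lambda^{p^k,2,p^k}_{\mathcal{P}_T,S_j,\beta}(L(U,H))$, uniformly for $X$ on compact subsets of $\mathcal{L}^{p^k}_{\mathcal{P}_T}(\paths)$ and $X_1,\ldots,X_i$ on compact subsets of $\mathcal{L}^{ip^k}_{\mathcal{P}_T}(\paths)$. For this I would rely on the pointwise formulas \eqref{2016-04-05:00} of Lemma~\ref{2016-04-05:04}, use the pointwise convergence granted by Assumption~\ref{2016-04-11:00}\emph{(\ref{2016-04-21:14})} of the derivatives of $b_j$ and $S_{(\cdot)}\sigma_j$, dominate by \eqref{2016-04-11:01}--\eqref{2016-04-11:02} combined with the generalized H\"older inequality, and conclude by Lebesgue's dominated convergence theorem (applied three times for the $\sigma$-term, as in the proof of Lemma~\ref{2016-04-05:04}).

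Having these convergences in hand, Proposition~\ref{propp:2012-05-23-aa} applied to the family $\{\psi^{(j)}_1\}_{j}$ and its fixed-point functions $Y\mapsto X^{t_j,Y}_j$ (respectively $Y\mapsto X^{\hat t,Y}$) gives exactly \eqref{eq:2016-04-21:03}, uniformly for $Y$ on compacts of $\mathcal{L}^{p^n}_{\mathcal{P}_T}(\paths_{\{\hat t\}})$ and for $Y_1,\ldots,Y_i$ on compacts of $\mathcal{L}^{p^n}_{\mathcal{P}_T}(\paths_{\{\hat t\}})$. The main obstacle I foresee is step \emph{(b)} above: one must handle simultaneously the perturbation in $t_j$ (which forces the use of $\paths_{\{\hat t\}}$ so that $\id^{S_j}_{t_j}(Y)$ actually converges), the strong---but not operator-norm---convergence $S_j\to S$, and the different codomains $\overline\Lambda^{p^k,2,p^k}_{\mathcal{P}_T,S_j,\beta}(L(U,H))$ of $F_{\sigma_j}$ for different $j$; this is overcome via the factorization identity \eqref{eq:2016-03-23:01}, the estimate \eqref{eq:2016-04-06:01}, and the uniform boundedness of the stochastic-convolution operators, exactly as in the proof of Proposition~\ref{2016-04-16:03}, but now also at the level of the derivatives.
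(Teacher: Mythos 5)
Your proposal is correct and follows essentially the same route as the paper: restrict the parametric contractions $\psi^{(j)}$ to $\mathcal{L}^{p^n}_{\mathcal{P}_T}(\paths_{\{\hat t\}})\times \mathcal{L}^{p^k}_{\mathcal{P}_T}(\paths_{\{\hat t\}})$, verify Assumption~\ref{2016-02-24:00} uniformly in $j$ via the equivalent norms, and check the convergence hypotheses of Proposition~\ref{propp:2012-05-23-aa} by reducing them to the $\id^{S_j}_{t_j}$ term and the convolution of the derivatives of $F_{b_j},F_{\sigma_j}$, handled exactly as in Proposition~\ref{2016-04-16:03} together with the pointwise convergences and uniform bounds of Assumption~\ref{2016-04-11:00}, the generalized H\"older inequality, and dominated convergence.
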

\begin{proof}
  By Theorem \ref{2016-04-05:05}, 
  $
  \mathcal{L}^{p^n}_{\mathcal{P}_T}(\paths)\rightarrow \Lpaths,\ Y \mapsto X_j^{t_j,Y}
  $
belongs to 
$\Gatot
{
  \mathcal{L}^{p^n}_{\mathcal{P}_T}(\paths)
}{\Lpaths}{n}$. Then, since $X^{t_j,Y}_j\in \mathcal{L}^{p}_{\mathcal{P}_T}(\paths_{\{\hat t\}})$ if $Y\in \mathcal{L}^{p}_{\mathcal{P}_T}(\paths_{\{\hat t\}})$,  the map
  $
  \mathcal{L}^{p^n}_{\mathcal{P}_T}(\paths_{\{\hat t\}})\rightarrow \mathcal{L}^p_{\mathcal{P}_T}(\paths_{\{\hat t\}}),\ Y \mapsto X_j^{t_j,Y}
  $ belongs to
$\Gatot
{
  \mathcal{L}^{p^n}_{\mathcal{P}_T}(\paths_{\{\hat t\}})
}{
\mathcal{L}^p_{\mathcal{P}_T}(\paths_{\{\hat t\}}
}{n}$.

To prove 
\eqref{eq:2016-04-21:03},
we wish to apply Proposition~\ref{propp:2012-05-23-aa}.
In the proof of Theorem~\ref{2016-04-05:05},
we associated
 the map $\psi$ and the spaces $\mathcal{L}^{p^k}_{\mathcal{P}_T}(\paths)$
to
Assumption~\ref{2016-02-24:00}.
In the same way, here, 
we  associate the restrictions
$$
\psi ^{(1)}_{|\mathcal{L}^{p^n}_{\mathcal{P}_T}(\paths_{\{\hat t\}})\times \mathcal{L}^p_{\mathcal{P}_T}(\paths_{\{\hat t\}})},
\psi ^{(2)}_{|\mathcal{L}^{p^n}_{\mathcal{P}_T}(\paths_{\{\hat t\}})\times \mathcal{L}^p_{\mathcal{P}_T}(\paths_{\{\hat t\}})},
\psi ^{(3)}_{|\mathcal{L}^{p^n}_{\mathcal{P}_T}(\paths_{\{\hat t\}})\times \mathcal{L}^p_{\mathcal{P}_T}(\paths_{\{\hat t\}})},
\ldots,
$$
 respectively
to the functions $h^{(1)}_1,h_1^{(2)},h_1^{(3)},\ldots$
appearing in the assumption of 
Proposition~\ref{propp:2012-05-23-aa},
and,
 to each $h_1^{(m)}$, we associate the functions
$h^{(m)}_k$, for $k=1,\ldots,n$,  
defined by
$
h^{(m)}_k\coloneqq 
\psi_{k|\mathcal{L}^{p^n}_{\mathcal{P}_T}(\paths_{\{\hat t\}})\times \mathcal{L}^{p^k}_{\mathcal{P}_T}(\paths_{\{\hat t\}})}
$
and considered as $\mathcal{L}^{p^k}_{\mathcal{P}_T}(\paths)$-valued functions.

As argued several times above,
we can choose
$\lambda>0$
such that,
for 
$m=1,2,\ldots$ and $k=1,\ldots,n$,
each function 
$h^{(m)}_k$ 
is a parametric $1/2$-contractions
with respect to the norm
$|\cdot|_{\mathcal{L}^{p^k}_{\mathcal{P}_T}(\paths),\lambda}$.
With respect to this equivalent norm,
for each $h_1^{(m)}$, Assumption~\ref{2016-02-24:00} can be verified in exactly the same way as it was verified for the function $h_1$ appearing in the proof of 
Theorem~\ref{2016-04-05:05}.
Then, in order to apply 
Proposition~\ref{propp:2012-05-23-aa}, it remains to 
verify
hypotheses
\emph{(\ref{2016-04-21:04})},\emph{(\ref{2016-04-21:05})},\emph{(\ref{2016-04-21:06})} appearing in the statement of that proposition.
Since the norms $|\cdot|_{\mathcal{L}^{p^k}_{\mathcal{P}_T}(\paths),\lambda}$, $\lambda\geq 0$, are equivalent, the three hypotheses
reduce to the following convergences:
\begin{enumerate}[(i)]
\item\label{2016-04-21:10} for all  $k=1,\ldots,n$,
$X\in \mathcal{L}^{p^k}_{\mathcal{P}_T}(\paths_{\{\hat t\}})$,
  \begin{equation}
    \label{eq:2016-04-21:07}
    \psi ^{(j)}(Y,X)\rightarrow \psi (Y,X)\ \mbox{in}\ 
(\mathcal{L}^{p^k}_{\mathcal{P}_T}(\paths_{\{\hat t\}}),
|\cdot|_{\mathcal{L}^{p^k}_{\mathcal{P}_T}(\paths)}
)
  \end{equation}
uniformly for $Y$ on compact subsets of
$\mathcal{L}^{p^n}_{\mathcal{P}_T}(\paths_{\{\hat t\}})$;
\item\label{2016-04-21:11} for $k=1,\ldots,n$
  \begin{equation}
    \label{eq:2016-04-21:08}
    \begin{dcases}
\lim_{j\rightarrow \infty}       \partial _{Y'}\psi^{(j)}(Y,X)=  \partial _{Y'}\psi(Y,X)&\mbox{in\ } 
(\mathcal{L}^{p^k}_{\mathcal{P}_T}(\paths_{\{\hat t\}}),
|\cdot|_{\mathcal{L}^{p^k}_{\mathcal{P}_T}(\paths)})
\\
\lim_{j\rightarrow \infty}       \partial _{X'}\psi^{(j)}(Y,X)=  \partial _{X'}\psi(Y,X)&\mbox{in\ } (\mathcal{L}^{p^k}_{\mathcal{P}_T}(\paths_{\{\hat t\}}),
|\cdot|_{\mathcal{L}^{p^k}_{\mathcal{P}_T}(\paths)})
    \end{dcases}
  \end{equation}
uniformly for $Y,Y'$ on compact subsets of 
$\mathcal{L}^{p^n}_{\mathcal{P}_T}(\paths_{\{\hat t\}})$
and $X,X'$ on compact subsets of
$\mathcal{L}^{p^k}_{\mathcal{P}_T}(\paths_{\{\hat t\}})$;
\item\label{2016-04-21:12} for all $k=1,\ldots,n-1$, $Y\in
\mathcal{L}^{p^n}_{\mathcal{P}_T}(\paths_{\{\hat t\}})$,
$l,i=0,\ldots,n$, $1\leq l+i\leq n$, 
\begin{equation}
  \label{eq:2016-04-21:09}
  \lim_{j\rightarrow \infty}
   \partial ^{l+i}   _{Y_1\ldots Y_l X_1\ldots X_i}
   \psi^{(j)}(Y,X)
   =
  \partial ^{l+i}   _{Y_1\ldots Y_l X_1\ldots X_i}
   \psi(Y,X)
   \ \mbox{in\ }
(\mathcal{L}^{p^k}_{\mathcal{P}_T}(\paths_{\{\hat t\}}),
|\cdot|_{\mathcal{L}^{p^k}_{\mathcal{P}_T}(\paths)})
\end{equation}
uniformly for $Y,Y_1,\ldots,Y_l$ on compact subsets of 
$\mathcal{L}^{p^n}_{\mathcal{P}_T}(\paths_{\{\hat t\}})$,
 $X$ on compact subsets of
$\mathcal{L}^{p^k}_{\mathcal{P}_T}(\paths_{\{\hat t\}})$,
$X_1,\ldots,X_i$ on compact subsets of
$\mathcal{L}^{p^{k+1}}_{\mathcal{P}_T}(\paths_{\{\hat t\}})$.
\end{enumerate}
Taking into account  the equicontinuity of the family $\{\psi^{(j)}\}_{j\in \mathbb{N}}$ with respect to the second variable,
 \eqref{2016-04-21:10} is contained in the proof 
Proposition~\ref{2016-04-16:03}.
As regarding \eqref{2016-04-21:11} and \eqref{2016-04-21:12},
since the linear term $\id^{S_j}_{t_j}$ is easily treated in $\mathcal{L}^p_{\mathcal{P}_T}(\paths_{\{\hat t\}})$ (as shown in the proof of Proposition~\ref{2016-04-16:03}),
 the only comments to make are about the convergences of the derivatives
$$
\begin{dcases}
   \partial_{Y'}(S_j\convt{t_j}F_{b_j})(X)\\
 \partial_{X'}(S_j\convt{t_j}F_{b_j})(X)\\
 \partial_{Y'}(S_j\sconvt{t_j}F_{\sigma_j})(X)\\
 \partial_{X'}(S_j\sconvt{t_j}F_{\sigma_j})(X)
\end{dcases}
\quad \mbox{and}\quad
\begin{dcases}
\partial ^{l+i}   _{Y_1\ldots Y_l X_1\ldots X_i}
(S_j\convt{t_j}F_{b_j})(X)\\
\partial ^{l+i}   _{Y_1\ldots Y_l X_1\ldots X_i}
(S_j\convt{t_j}F_{\sigma_j})(X).
\end{dcases}
$$
Due to linearity and continuity of the convolution operators, to the independence of the first variable of $F_b$ and $F_\sigma$, and to Lemma~\ref{2016-04-05:04}, the above
derivatives are 
 respectively
equal to
\begin{equation}
  \label{eq:2016-04-21:13}
  \begin{dcases}
  0\\
 S_j\convt{t_j}(\partial_{X'}F_{b_j})(X)\\
 0 \\ 
 S_j\sconvt{t_j}(\partial_{X'}F_{\sigma_j})(X)
\end{dcases}
\quad \mbox{and}\quad
\begin{dcases}
  \begin{dcases}
    S_j\convt{t_j}(
    \partial ^{i} _{X_1\ldots X_i}
    F_{b_j})(X)& \mbox{if\ }l=0\\
    0&\mbox{otherwise}
  \end{dcases}\\
  \begin{dcases}
    S_j\convt{t_j}(
    \partial ^{i} _{X_1\ldots X_i} F_{\sigma_j})(X)&\mbox{if\ }l=0\\
0&\mbox{otherwise.}
  \end{dcases}
\end{dcases}
\end{equation}
Let us consider, for example, the difference
\begin{equation}\label{2016-04-21:15}
S_j\convt{t_j}(
    \partial ^{i} _{X_1\ldots X_i} F_{\sigma_j})(X_j)
-S\convt{\hat t}(
    \partial ^{i} _{X_1\ldots X_i} F_{\sigma})(X)
\end{equation}
for some sequence $\{X_j\}_{j\in \mathbb{N}}$ converging to $X$ in $\mathcal{L}^{p^k}_{\mathcal{P}_T}(\paths)$.
We can decompose the above difference as done in \eqref{eq:2016-04-21:16}, and then use the same arguments, together with
expressions \eqref{2016-04-05:00},
the bounds
\eqref{2016-04-11:01} and \eqref{2016-04-11:01},
the generalized H\"older inequality, 
the pointwise  convergences
in Assumption~\ref{2016-04-11:00}\emph{(\ref{2016-04-21:14})}, and Lebesgue's dominated convergence theorem,
to conclude
$$
S_j\convt{t_j}(
    \partial ^{i} _{X_1\ldots X_i} F_{\sigma_j})(X_j)
-S\convt{\hat t}(
    \partial ^{i} _{X_1\ldots X_i} F_{\sigma})(X)\rightarrow 0
$$
in $\mathcal{L}^{p^k}_{\mathcal{P}_T}(\paths_{\{\hat t\}})$, for all $X_1,\ldots,X_i\in \mathcal{L}^{p^{k+1}}_{\mathcal{P}_T}(\paths_{\{\hat t\}})$.
By recalling the continuity of $X \mapsto \partial ^{i} _{X_1\ldots X_i} F_{\sigma}(X)$ (Lemma~\ref{2016-04-05:04}), this shows
 the convergence
\begin{equation}
  \label{eq:2016-04-21:17}
  S_j\convt{t_j}(
    \partial ^{i} _{X_1\ldots X_i} F_{\sigma_j})(X)
-S\convt{\hat t}(
    \partial ^{i} _{X_1\ldots X_i} F_{\sigma})(X)\rightarrow 0,
  \end{equation}
  uniformly for $X$ on compact sets of $\mathcal{L}^{p^k}_{\mathcal{P}_T}(\paths_{\{\hat t\}})$, for fixed 
$X_1,\ldots,X_i\in \mathcal{L}^{p^{k+1}}_{\mathcal{P}_T}(\paths_{\{\hat t\}})$.
But, since by Lemma~\ref{2016-04-05:04} the derivatives 
\eqref{eq:2016-04-21:13}
are jointly continuous in $X,X',X_1,\ldots,X_i$,
and uniformly bounded,
the convergence
\eqref{eq:2016-04-21:17} occurs  uniformly for 
$X$ on compact sets of $\mathcal{L}^{p^k}_{\mathcal{P}_T}(\paths_{\{\hat t\}})$ and
$X_1,\ldots,X_i$ on compact sets of $ \mathcal{L}^{p^{k+1}}_{\mathcal{P}_T}(\paths_{\{\hat t\}})$.
The arguments for the other derivatives are similar.
This shows that we can apply Proposition~\ref{propp:2012-05-23-aa}, which provides \eqref{eq:2016-04-21:03}.
\end{proof}

\addcontentsline{toc}{chapter}{References}
\bibliographystyle{plain}
\bibliography{rosestolato_pdsde_2018-06-20_arxiv.bbl}

\begin{thebibliography}{10}

\bibitem{Cerrai2001}
S.~Cerrai.
\newblock {\em Second-order PDE's in finite and infinite dimension}.
\newblock Springer, 2001.

\bibitem{Clark2013}
D.~E. Clark and J.~Houssineau.
\newblock Fa\`a di {B}runo's formula for chain differentials.
\newblock 2013.
\newblock Preprint, arXiv:1310.2833.

\bibitem{Cont2010a}
R.~Cont and D.-A. Fourni\'e.
\newblock Change of variable formulas for non-anticipative functionals on path
  space.
\newblock {\em Journal of Functional Analysis}, 259:1043--1072, 2010.

\bibitem{Cont2010}
R.~Cont and D.-A. Fourni\'e.
\newblock A functional extension of the {I}t\^o formula.
\newblock {\em C.\ R.\ Math.\ Acad.\ Sci.\ Paris, Ser.\ I}, 348:57--61, 2010.

\bibitem{Cont2013}
R.~Cont and D.-A. Fourni\'e.
\newblock Functional {I}t\^o calculus and stochastic integral representation of
  martingales.
\newblock {\em The Annals of Probability}, 41:109--133, 2013.

\bibitem{Cosso2014a}
A.~Cosso, C.~Di~Girolami, and F.~Russo.
\newblock Calculus via regularizations in {B}anach spaces and {K}olmogorov-type
  path-dependent equations.
\newblock 2014.
\newblock Preprint, arXiv:1411.8000.

\bibitem{Cosso2014b}
A.~Cosso and F.~Russo.
\newblock A regularization approach to functional {I}t\^o calculus and
  strong-viscosity solutions to path-dependent {PDEs}.
\newblock 2014.
\newblock Preprint, arXiv:1401.5034.

\bibitem{DaPrato1992}
G.~Da~Prato and J.~Zabczyck.
\newblock {\em Stochastic Equations in Infinite Dimensions}.
\newblock Cambridge University Press, 1992.

\bibitem{DaPrato2004}
G.~Da~Prato and J.~Zabczyck.
\newblock {\em Second Order Partial Differential Equations in Hilbert Spaces}.
\newblock Cambridge University Press, 2002.

\bibitem{DaPrato2014}
G.~Da~Prato and J.~Zabczyck.
\newblock {\em Stochastic Equations in Infinite Dimensions}.
\newblock Cambridge University Press, $2^{\textrm{nd}}$ edition, 2014.

\bibitem{Dupire2009}
B.~Dupire.
\newblock Functional {I}t\^o calculus.
\newblock {\em Bloomberg Portfolio Research Paper}, 2009.

\bibitem{Flett1980}
T.M. Flett.
\newblock {\em Differential Analysis}.
\newblock Cambridge University Press, 1980.

\bibitem{Gawarecki2011}
L.~Gawarecki and V.~Mandrekar.
\newblock {\em Stochastic Differential Equations in Infinite Dimensions}.
\newblock Springer, 2011.

\bibitem{Granas2003}
A.~Granas and J.~Dugundji.
\newblock {\em Fixed Point Theory}.
\newblock Springer, 2003.

\bibitem{Knoche2001}
C.~Knoche and K.~Frieler.
\newblock Solutions of stochastic differential equations in infinite
  dimensional {H}ilbert spaces and their dependence on initial data, 2001.
\newblock Diplomarbeit, Fakult\"at f\"ur Mathematik, Universit\"at Bielefeld.

\bibitem{Mohammed1984}
S.-E.A. Mohammed.
\newblock {\em Stochastic Functional Differential Equations}.
\newblock Pitman, 1984.

\end{thebibliography}

\end{document}